\DeclareMathOperator{\mymod}{mod}
\DeclareMathOperator{\myfunc}{F}
\DeclareMathOperator{\mydiag}{D}
\DeclareMathOperator{\myseq}{S}
\def\AbGp{{\bf AbGp}}
\def\CAbGp{{\bf CAbGp}}
\def\CCAbGp{{\bf CCAbGp}}
\def\PAbGp{{\bf PAbGp}}
\def\TAbGp{{\bf TAbGp}}
\def\TFAbGp{{\bf TFAbGp}}
\newcommand{\AbsNm}[1][]{%
\ifthenelse{\equal{#1}{}}{|\!\Nm\!|}{|\!\Nm\!|(#1)}%
}
\def\bfa{\mathbf{a}}
\def\pairs{\Sigma}
\def\pairsmap{\sigma}
\def\raytooriented{t}
\def\FRtoC{\rho}
\def\ourisometry{r}
\def\sop{S} 
\begin{document}
\title[Ray class groups of quadratic fields]
{A heuristic for ray class groups of quadratic number fields}
\author{Alex Bartel}
\address{School of Mathematics and Statistics, University of Glasgow,
University Place, Glasgow G12 8SQ, United Kingdom}
\author{Carlo Pagano}
\address{Department of Mathematics, Concordia University, Montreal H3G 1M8, Canada}

\email{alex.bartel@glasgow.ac.uk, carlo.pagano@concordia.ca}

\begin{abstract}
  We formulate a model for the average behaviour of ray class groups
  of real quadratic fields with respect to a fixed rational modulus,
  locally at a finite set $S$ of odd primes.
  To that end, we introduce Arakelov ray class groups of a number
  field, and postulate that, locally at $S$,
  the Arakelov ray class groups of real quadratic fields are distributed
  randomly with respect to a natural Cohen--Lenstra type probability measure.
  We show that our heuristics imply the Cohen--Lenstra heuristics on class
  groups of real quadratic fields, as well as equidistribution results
  on the fundamental unit of a real quadratic field modulo an integer,
  and are consistent with Varma's results on average of sizes of $3$-torsion
  subgroups of ray class groups of quardratic fields.
\end{abstract}
\date{\today}

\maketitle
\tableofcontents
\section{Introduction}
\subsection{Ray class groups}\label{sec:ray}
In \cite{Varma} Varma determined the average size of the $3$-torsion subgroups
of the ray class groups of imaginary and of real quadratic fields, ordered
by absolute discriminant, with respect to a fixed rational modulus. This
raised the natural problem of finding a model that ``explains'' Varma's formulae,
analogously to the way in which the Cohen--Lenstra heuristics \cite{CL} explained
the result of Davenport--Heilbronn \cite{DH} on the average size of $3$-torsion of class
groups of quadratic fields. Bhargava had already publicised this problem at an AIM workshop
back in 2011. The second author and Sofos have found such a
model for ray class groups of imaginary quadratic fields \cite{PS}.
In this paper we do the same for the family of real quadratic fields.

A central r\^ole in our work is played by Arakelov ray class groups of a number field.
It is a beautiful object that does not appear to have been 
treated in the literature, and we devote some space to shining a spotlight on it 
and investigating its properties. As a byproduct, in Section \ref{sec:geodesics}
we reinterpret a classical construction that assigns to each narrow ideal class of a real
quadratic field a modular geodesic: we upgrade this construction to a collection of
isometries from narrow Arakelov class groups, whose connected components
are narrow ideal classes, to the quotient of the hyperbolic upper half plane
by its isometry group $\PSL_2(\Z)$.

Let $F$ be a number field, let $\cO_F$ denote its ring of integers, and let
$\fm=(\fm_{\f},\fm_{\infty})$ be a modulus, where $\fm_{\f}$ is an ideal of $\cO_F$ and
$\fm_{\infty}$ is a set of real embeddings of $F$. Let $F^1(\fm)$ be the set
of all $\alpha\in F^\times$ satisfying $\sigma(\alpha)>0$ for all
$\sigma\in \fm_{\infty}$ and $\ord_{\fp}(\alpha-1)\geq \ord_{\fp}(\fm_{\f})$
for all prime ideals $\fp$ of $\cO_F$ dividing $\fm_{\f}$, where $\ord_{\fp}$
denotes the $\fp$-adic valuation. The ray class group
$\Cl_F(\fm)$ of $F$ with modulus $\fm$ is the quotient of the group $\Id_F(\fm)$ of
all fractional ideals of $\cO_F$ that are coprime to $\fm_{\f}$ by the subgroup of
all principal fractional ideals of the form $(\alpha)$ for some
$\alpha\in F^1(\fm)$. The class group $\Cl_F$ is 
the ray class group $\Cl_F((\cO_F,\{\}))$. One has a canonical short exact sequence
\begin{eqnarray}\label{eq:SeqRay}
  \myseq_F^{\fin}(\fm)\colon 0\to
  \frac{(\cO_F/\fm_{\f})^\times\times \{\pm1\}^{\fm_{\infty}}}{(\cO_F^\times)_{\mymod \fm}}
    \to \Cl_F(\fm) \to \Cl_F \to 0,
\end{eqnarray}
where $(\cO_F^\times)_{\mymod \fm}$ denotes the image of $\cO_F^\times$ in
$(\cO_F/\fm_{\f})^\times\times\{\pm1\}^{\fm_{\infty}}$ under the canonical map given
by the quotient map on the first factor, and by
$\alpha\mapsto (\sign(\sigma(\alpha))_{\sigma\in \fm_{\infty}}$
on the second factor.
%

There is a well-established principle that a ``random'' algebraic object 
is isomorphic to a given object $X$ with probability propotional to $1/\#\Aut(X)$. 
It forms the starting point of the Cohen--Lenstra heuristics, where it is conjectured 
to apply to ideal class groups of imaginary quadratic fields \cite{CL}. The first
author and Lenstra showed in \cite{BL} that in order for this principle to
apply to class groups of general number fields, one should instead consider so-called
Arakelov class groups. We will show in Theorem \ref{thm:realquad} below that the
conjectures in \cite{PS} on ray class groups of imaginary quadratic number fields can
be understood as saying that, roughly speaking, the exact sequence $\myseq_F^{\fin}(\fm)$
is isomorphic to a given suitable exact sequence $E$ with probaility proportional
to $1/\#\Aut(E)$. In order to make the same principle applicable to ray class groups of
real quadratic fields, we introduce the so-called Arakelov ray class group of a number field.


\subsection{Arakelov ray class groups}\label{sec:introgoodprimes}
We refer to Section \ref{sec:defs} for details that we leave out for now.
For a number field $F$, we define the following invariants attached to $F$.
Let $\overline{F_{\R}^\times}$ denote the quotient of $(F\otimes_{\Q}\R)^\times$
by its maximal compact subgroup.
The group $\overline{F_{\R}^\times}$ is isomorphic
to $\prod_{v\in S_{\infty}} \R_{>0}$, where $S_{\infty}$ denotes the set of
infinite places of $F$. We have natural maps $\Id_F(\fm)\to \R_{>0}$ and
$\overline{F_{\R}^\times}\to \R_{>0}$, the first induced by the ideal norm,
and the second by the absolute value of the $\R$-algebra norm on $F\otimes_{\Q}\R$.
It follows from the product formula that the image of
the natural map $F^1(\fm)\to \Id_F(\fm)\times\overline{F_{\R}^\times}$ is contained in the fibre
product $\Id_F(\fm)\times_{\R_{>0}}\overline{F_{\R}^\times}$, and we define the
\emph{Arakelov ray class group} $\Pic_F^0(\fm)$ of $F$ with modulus $\fm$ to be the quotient
of $\Id_F(\fm)\times_{\R_{>0}}\overline{F_{\R}^\times}$ by the image of $F^1(\fm)$.
As we will explain in more detail in Section \ref{sec:defs}, this is a compact abelian group.
When we have $\fm=(\cO_F,\{\})$, we will drop it from the notation, and we refer to $\Pic_F^0$ as
the \emph{Arakelov class group} of $F$.
The natural map $\Id_F(\fm)\times_{\R_{>0}}\overline{F_{\R}^\times}\to
\Id_F\times_{\R_{>0}}\overline{F_{\R}^\times}$ induces a surjective map
$\Pic_F^0(\fm)\to \Pic_F^0$, which is the right hand map in the short exact sequence

\begin{eqnarray}\label{eq:SeqRayArak}
  \quad\quad \myseq^{\Ara}_F(\fm)\colon 0 \to \frac{(\cO_F/\fm_{\f})^\times\times\{\pm1\}^{\fm_{\infty}}}{(\mu_F)_{\mymod \fm}} \to \Pic_F^0(\fm) \to \Pic_F^0 \to 0.
\end{eqnarray}

The great advantage of that exact sequence compared to the sequence
$\myseq^{\fin}_F(\fm)$ above is that if $\fm_{\f}$ is
generated by a fixed rational integer and $\fm_{\infty}$ is empty,
then the dependence of the left hand term of $\myseq^{\Ara}_F(\fm)$ on $F$ is easy to
control: it only depends on $\mu_F$ and
on the splitting behaviour of $\fm_{\f}$ in $F$. As a consequence, under these
hypotheses, any family of number fields that one considers in Cohen--Lenstra
type heuristics can be partitioned into finitely many subfamilies with easy to
compute densities such that the left hand term in $\myseq^{\Ara}_F(\fm)$ is 
constant in each subfamily.


\begin{assumption}\label{ass}
For the rest of the introduction, assume that $\fm_{\infty}$ is empty and that
$\fm_{\f}$ is generated by a rational integer.
\end{assumption}
Under this assumption, it makes
sense to ask how $\Pic_F^0(\fm)$ behaves ``on average'' as $F$ varies over natural
families of number fields. We will now formulate our main heuristic concerning
that behaviour and state the main theorems for the family of real quadratic fields.

\subsection{The main heuristic}\label{sec:mainheuristic}
Let $C_2$ be a group of order $2$. Let $\sop$ be a finite set of odd prime numbers,
let
$$
\Z_{\sop}=\prod_{p\in \sop}\Z_p
$$
be the ring of $\sop$-adic integers.
We call a torsion abelian group an \emph{$\sop$-group} if the order of every element is some
product of primes in $\sop$. If $C$ is an abelian group,
abbreviate $\Z_{\sop}\otimes_{\Z}C$ to $C_\sop$, and denote by $C(\sop^\infty)$
the $\sop^\infty$-torsion subgroup of $C$, i.e. the largest subgroup
of $C$ that is an $\sop$-group. Let $\cG^-_{\cpt}$ be a full set
of representatives of isomorphism classes of finite abelian groups,
and let $\cG^+_{\cpt}$ be a full set of representatives of isomorphism classes of
compact abelian groups whose connected component of the identity is isomorphic to
$\R/\Z$. 
Let $\cG^-_S$ be a full set of representatives
of isomorphism classes of finite abelian $\sop$-groups, and let $\cG^+_{\sop}$ be a
full set of representatives of isomorphism classes of abelian $\sop$-groups
whose maximal $\sop$-divisible subgroup is isomorphic to $\prod_{p\in\sop}\Q_p/\Z_p$
and whose quotient by that subgroup is finite. We endow all of these groups with the structure of a $C_2$-module,
with the non-trivial element acting by multiplication by $-1$.
Thus, for every imaginary, respectively real quadratic field $F$, the Arakelov class group $\Pic_F^0$
is isomorphic, as a $\Z[C_2]$-module via the Galois action, to a unique element of $\cG^-_{\cpt}$, respectively
$\cG^+_{\cpt}$, while $\Pic_F^0(\sop^\infty)$ is isomorphic to a unique element of $\cG^-_{\sop}$,
respectively~$\cG^+_{\sop}$.

%
We will use the sign $\pm$ to mean
``one of $+$ or $-$''. We will shortly define other structures that depend on this
sign, and there, $\pm$ will always be understood to denote the appropriate sign.


Let $\fm_{\f}\in \Z$. We will now set up the partition of the families of quadratic number fields $F$
according to the splitting behaviour of $\fm_{\f}$ in $F$, as mentioned at the end of the previous
subsection. Slightly abusing notation, we will also regard $\fm_{\f}$ as an ideal in
varying rings of integers $\cO_F$. For every prime divisor $p$ of $\fm_{\f}$, fix a
degree $2$ \'etale $\Q_p$-algebra $A_p$, viewed as a $C_2$-module by identifying
$C_2$ with the automorphism group of $A_p/\Q_p$, let $A=\prod_{p|\fm_{\f}}A_p$ with the product 
taken over prime divisors, let $\cO_A$ be the integral closure of $\prod_{p|\fm_{\f}}\Z_p$
in $A$, and let $R=\cO_A/\fm_{\f}$. Finally, set $U_R=R^{\times}/\{\pm 1\}$.
If $F$ is a real, respectively imaginary quadratic number field other than $\Q(\sqrt{-3})$,
then the exact sequence \eqref{eq:SeqRayArak} gives rise, upon taking the quotient of $U_R$
by its maximal subgroup of order coprime to $S$, to a short exact sequence of $\Z[C_2]$-modules
of the form
\begin{eqnarray}\label{eq:theta}
\Theta\colon 0\to U_R(S^\infty)\to B \to C\to 0,
\end{eqnarray}
where $C\in \cG^{\pm}_{\cpt}$. Correspondingly, as a model for the 
$\sop^\infty$-torsion of the exact sequence \eqref{eq:SeqRayArak} consider short
exact sequences
\begin{eqnarray}\label{eq:thetaS}
\Theta\colon 0\to U_R(\sop^\infty)\to B \to C\to 0
\end{eqnarray}
with $C\in \cG^{\pm}_{\sop}$.
Let 
\begin{align*}
  \Theta\colon & 0\to U_R(\sop^\infty)\stackrel{i}{\to} B \to C\to 0,\\
  \Theta'\colon & 0\to U_R(\sop^\infty)\stackrel{i'}{\to} B' \to C'\to 0
\end{align*}
be two sequences either both as in \eqref{eq:theta} or both as in \eqref{eq:thetaS}.
We define an isomorphism $\Theta\to \Theta'$ to be a $C_2$-module
isomorphism $f\colon B\to B'$ that sends $\im i$ isomorphically to
$\im i'$ and such that the automorphism $(i')^{-1}\circ f\circ i$ of $U_R(\sop^\infty)$
is induced by a ring automorphism of $R$. So as to stress this last condition,
we will write $\Aut_{\ring}(\Theta)$ for the group of such automorphims of
$\Theta$.
Let $\cG_{R,\cpt}^{\pm}$, respectively $\cG_{R,\sop}^{\pm}$ be a full set of representatives
of isomorphism classes of sequences as in \eqref{eq:theta}, respectively as in~\eqref{eq:thetaS}.

It is easy to see, and will in particular follow from Proposition
\ref{prop:sizeaut}, that every element
of $\cG_{R,\cpt}^{\pm}$ has finite automorphism group. Moreover, we will show in 
Corollary \ref{cor:converge},
that the two sums $\sum_{\Theta\in \cG_{R,\cpt}^{\pm}}1/\#\Aut_{\ring}\Theta$ both converge,
to $c_R^{\pm}$, say.
We define discrete probability distributions $\bP_{\cpt}^{\pm}$ on
$\cG_{R,\cpt}^{\pm}$ by 
$$
\bP_{\cpt}^{\pm}\{\Theta\} = \frac{(c_R^{\pm})^{-1}}{\#\Aut_{\ring}\Theta}.
$$
There are obvious functions $\cG_{R,\cpt}^{\pm}\to \cG_{R,\sop}^{\pm}$ that
send an element $\Theta\in \cG_{R,\cpt}^{\pm}$ to the unique element
of $\cG_{R,\sop}^{\pm}$ that $\Theta(S^\infty)$ is isomorphic to. Let
$\bP^{\pm}$ be the pushfowards of the probability distributions $\bP^{\pm}_{\cpt}$
under these functions.

If $f$ is a complex-valued $L^1$-function on one of the two probability spaces
$(\cG_{R,\sop}^{\pm},\bP^{\pm})$, then we denote its expected
value by $\bE^{\pm}(f)$, and for every sequence $\Theta$ as above we define $f(\Theta)$ to be the
value of $f$ on the unique element of $\cG_{R,\sop}^{\pm}$ that is isomorphic to $\Theta$.

For a real number $X>0$, let $\cF_X^-(R)$ respectively $\cF_X^+(R)$ be the set
of pairs $(F,r)$, where $F$ is an imaginary, respectively real quadratic number
field of conductor less than $X$, and $r$ is a $C_2$-equivariant ring isomorphism
$r\colon R\to \cO_F/\fm_{\f}$. For every $X\in \R_{>0}$ and for every
$(F,r)\in \cF_X^{\pm}(R)$ except for $F\cong \Q(\sqrt{-1})$ and $F\cong \Q(\sqrt{-3})$,
the $\sop^\infty$-torsion $\myseq_F^{\Ara}(\fm)(\sop^\infty)$ of the exact sequence
$\myseq_F^{\Ara}(\fm)$ defined in (\ref{eq:SeqRayArak})
is isomorphic, via $r$, to a unique element of $\cG_{R,\sop}^{\pm}$, where
the $C_2$-action is induced by the Galois action on $F$.
Our main heuristic assumption reads as follows.

\begin{heuristic}\label{he:realquad}
  For $X\in \R_{>0}$, let $\cF_X^{\pm}(R)$, $\cG_{R,\sop}^{\pm}$, and $\bE^{\pm}$ be as just defined.
  Then for every ``reasonable'' $\C$-valued function $f$ on $\cG_{R,\sop}^{\pm}$, the
  average
  $$
  \lim_{X\to \infty} \frac{\sum_{(F,r)\in \cF_X^{\pm}(R)}f(\myseq_F^{\Ara}(\fm)(\sop^\infty))}{\#\cF_X^{\pm}(R)}
  $$
  exists, and is equal to $\bE^{\pm}(f)$.
\end{heuristic}

\begin{remarks}
  \begin{enumerate}[leftmargin=*]
    \item The heuristic can be intuitively understood as saying that
      for a ``random'' real, respectively imaginary quadratic field $F$,
      the $\sop^\infty$-torsion of the exact sequence $S^{\Ara}_F(\fm)$
      defined in (\ref{eq:SeqRayArak}) is isomorphic to a given 
      element of $\cG_{R,\sop}^{\pm}$ with a probability that is inversely proportional to the 
      number of automorphisms of the latter.
    \item See \cite[\S 7]{BL} for a detailed discussion of what functions might
      be regarded as ``reasonable'' in the context of Cohen--Lenstra heuristics. 
      Here we will not concern ourselves further with this imprecision, but will
      just suggest that it does not seem inconceivable that functions that are
      $L^i$ for all $i\geq 1$ with respect to $\bP^{\pm}$ are all ``reasonable''.
    \item The finiteness condition on the set $\sop$ can likely be dropped in the case
      of real quadratic fields, but will be important for general families of number fields.
      As was shown in \cite[\S 4]{BL}, already the original Cohen--Lenstra--Martinet 
      heuristic is false without that assumption, in general.
    \item In Heuristic \ref{he:realquad} we enumerate real quadratic fields by conductor.
      One could equivalently enumerate them by discriminant, which is traditionally 
      by far the most frequently used invariant to enumerate number fields. However,
      it was shown in \cite[\S 6]{BL} that for more general families of number 
      fields, the Cohen--Lenstra--Martinet heuristic is false when number fields 
      are enumerated by discriminant.
  \end{enumerate}
\end{remarks}

\subsection{Consequences for ray class groups}
The simple postulate that the sequence $\myseq^{\Ara}_F(\fm)(\sop^\infty)$ of the
$\sop^\infty$-torsion of the Arakelov ray class group is distributed with
probability weights that are inversely proportional to the number of
automorphisms leads to very satisfying equidistribution predictions for the
sequence $\myseq^{\fin}_F(\fm)(\sop^\infty)$ of the $\sop^\infty$-torsion
subgroup of the ray class group. We will formulate these consequences somewhat
sloppily here, and refer to Section \ref{sec:good} for precise formulations.
\begin{theorem}\label{thm:realquad}
  Suppose that Heuristic \ref{he:realquad} holds for the functions $f$ as in
  Corollaries \ref{cor:PFclassgroup} -- \ref{cor:equidistrExt}. Let $g$ be the non-trivial element
  of $C_2$. Then the following assertions are true.
  \begin{enumerate}[leftmargin=*]
    \item\label{item:CL} The Cohen--Lenstra heuristic for $\sop$-Hall subgroups
      $(\Cl_F)(\sop^\infty)$ of the class groups of imaginary and of real
      quadratic number fields holds.
    \item\label{item:equihom} In the limit as $X\to \infty$, as $(F,r)$ runs
      over $\cF^+_X(R)$, the distribution of the image of the fundamental unit
      of $\cO_F$ in $\frac{\Z_{\sop}[C_2]}{(g+1)}\otimes_{\Z}\left(\frac{R^\times}{\{\pm1\}}\right)$
      approaches the uniform distribution.
    \item\label{item:indep} In the limit as $X\to \infty$, as $(F,r)$ runs
      over $\cF^+_X(R)$, the distribution of $\Cl_F(\sop^\infty)$ and that of
      the image of the fundamental unit in
      $\frac{\Z_{\sop}[C_2]}{(g+1)}\otimes_{\Z}\left(\frac{R^\times}{\{\pm1\}}\right)$
      are independent of each other.
    \item\label{item:imagext} Let $G$ be a finite abelian $\sop$-group, and let
      $\cF_X^-(R,G)$ be the set of triples $(F,r,\iota)$ where $(F,r)\in \cF_X^-(R)$
      and $\iota$ is an isomorphism $(\Cl_F)(\sop^\infty)\to G$. Then, in the limit as
      $X\to \infty$, when $(F,r,\iota)$ varies over $\cF_X^-(R,G)$, the class
      of $\myseq_F^{\fin}(\fm)(\sop^\infty)$ in
      $\Ext^1_{\Z_{\sop}[C_2]}\!\big(G,U_R(\sop^\infty)\big)$
      is equidistributed, in other words \cite[Heuristic assumption 2.4]{PS} holds.
    \item\label{item:realext} Let $u\in U_R(\sop^\infty)$
      be such that the generator of $C_2$ acts on $u$ by $u\mapsto u^{-1}$,
      let $Q=U_R(\sop^\infty)/\langle u\rangle$,
      let $A$ be a finite abelian $\sop$-group, and let
      $\cF_X^+(R,A,Q)$ be the set of triples $(F,r,\iota)$ where $(F,r)\in \cF_X^+(R)$,
      $\iota$ is an isomorphism $(\Cl_F)(\sop^\infty)\to A$, and
      $\left(\frac{(\cO_F/\fm_{\f})^\times}{(\cO_F^\times)_{\mymod \fm}}\right)\!(\sop^\infty)$ is
      identified by $r$ with $Q$. Then
      in the limit as $X\to \infty$, as $(F,r,\iota)$ runs over $\cF_X^+(R,A,Q)$,
      the class of $\myseq_F^{\fin}(\fm)(\sop^\infty)$ in $\Ext^1_{\Z_{\sop}[C_2]}(A,Q)$ is equidistributed.
  \end{enumerate}
\end{theorem}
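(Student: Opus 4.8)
The plan is to derive each of the five assertions by applying Heuristic \ref{he:realquad} to a carefully chosen "reasonable" function $f$ on $\cG_{R,\sop}^{\pm}$, and then to recognise the resulting average $\bE^{\pm}(f)$ as the predicted limiting distribution. The unifying principle is that the probability distribution $\bP^{\pm}$ on $\cG_{R,\sop}^{\pm}$ is itself a pushforward of the "$1/\#\Aut$" distribution $\bP_{\cpt}^{\pm}$, so that expectations of natural functions can be computed by the standard Cohen--Lenstra bookkeeping with automorphism-weighted sums. Concretely, for each item I would (i) identify which invariant of the sequence $\myseq_F^{\Ara}(\fm)(\sop^\infty)$ the desired statement is really about --- the quotient $C$, the cokernel of a map into $U_R(\sop^\infty)$, the image of the fundamental unit, or the extension class --- (ii) write that invariant as $f(\myseq_F^{\Ara}(\fm)(\sop^\infty))$ for an explicit, manifestly $L^i$-for-all-$i$ function $f$, and (iii) compute $\bE^{\pm}(f)$. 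Throughout I would freely use Proposition \ref{prop:sizeaut} and Corollary \ref{cor:converge} to guarantee that all the relevant automorphism-weighted sums converge, so that the functions in play are genuinely $L^1$ (indeed $L^i$) and hence covered by the statement "Heuristic \ref{he:realquad} holds for the functions $f$ as in Corollaries \ref{cor:PFclassgroup}--\ref{cor:equidistrExt}".

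For item \eqref{item:CL}, the key observation is that the quotient term $C$ of $\myseq_F^{\Ara}(\fm)(\sop^\infty)$ is exactly $\Pic_F^0(\sop^\infty)$, whose relationship to $(\Cl_F)(\sop^\infty)$ is governed by the exact sequence \eqref{eq:SeqRayArak} with trivial modulus, i.e.\ by the unit contribution; and by the work in \cite{BL}, the Cohen--Lenstra distribution on class groups is precisely what one obtains from the $1/\#\Aut$-distribution on Arakelov class groups. So I would take $f$ to be the indicator function of "the quotient $C$ has prescribed $\sop$-part", push $\bP_{\cpt}^{\pm}$ forward along $\Theta\mapsto C$, and match the answer against \cite{BL}. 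For item \eqref{item:equihom}, the image of the fundamental unit lives naturally in $\frac{\Z_\sop[C_2]}{(g+1)}\otimes_\Z U_R$ because the unit group modulo torsion is a rank-one $C_2$-module on which $g$ acts by $-1$ in the real quadratic case, and I would show that this image is a function of the sub-object $\im i \subseteq B$ together with the splitting data; uniformity then drops out because $\Aut_{\ring}(\Theta)$ acts transitively on the relevant set of embeddings. Items \eqref{item:indep}, \eqref{item:imagext}, and \eqref{item:realext} are all "$\Ext$-class is equidistributed" statements: here I would fix the quotient $C$ (resp.\ the cokernel $Q$) and the kernel, use that the fibre of $\Theta\mapsto(C,\text{kernel data})$ over a fixed target is, by construction, in $\Aut$-weighted bijection with $\Ext^1_{\Z_\sop[C_2]}(C,U_R(\sop^\infty))$ (resp.\ $\Ext^1_{\Z_\sop[C_2]}(A,Q)$), and invoke the elementary fact that the $1/\#\Aut$-measure on extensions of a fixed pair of modules is the uniform measure on $\Ext^1$ after quotienting by the action of $\Aut(C)\times\Aut(\text{kernel})$ --- which is exactly what \cite[Heuristic assumption 2.4]{PS} predicts. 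Independence in \eqref{item:indep} follows because, once one conditions on $C$, the extension class and $C$ are by construction the two independent coordinates of the pushforward measure.

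The main obstacle, and the step I expect to require the most care, is the bookkeeping that translates between the Arakelov-theoretic sequence $\myseq_F^{\Ara}(\fm)$ and the classical sequence $\myseq_F^{\fin}(\fm)$, especially the passage from the compact groups in $\cG_{R,\cpt}^{\pm}$ (with their $\R/\Z$-components) to the discrete $\sop$-groups in $\cG_{R,\sop}^{\pm}$, and then back to honest ray class groups. One has to check that taking $\sop^\infty$-torsion commutes appropriately with the relevant quotients, that the maximal-$\sop$-divisible-subgroup conventions match the torus/free-part conventions in \cite{BL}, and that the $C_2$-equivariance is preserved at every stage --- in particular that the exclusion of $\Q(\sqrt{-1})$ and $\Q(\sqrt{-3})$ (the fields with extra roots of unity) accounts for all discrepancies, as in \cite{BL}. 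A secondary technical point is verifying that each $f$ used is actually "reasonable", i.e.\ lies in $\bigcap_{i\ge 1}L^i(\bP^{\pm})$; for indicator functions of cylinder sets this is immediate, but for the $\Ext$-class statements one should confirm that the relevant counting functions do not grow too fast, which again reduces to the convergence established in Corollary \ref{cor:converge} together with standard estimates on the number of $C_2$-module extensions with bounded invariants.
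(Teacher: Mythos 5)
Your top-level strategy does match the paper's: push $\bP^{\pm}$ forward under natural invariant maps, compute the resulting distribution, and match it against the predictions. But in two places the claims you lean on are not elementary and in one place the argument you sketch is not the right mechanism. The assertion that "the $1/\#\Aut$-measure on extensions of a fixed pair of modules is the uniform measure on $\Ext^1$ after quotienting by the $\Aut$-action" is precisely Proposition \ref{prop:sizeaut}, and it hinges on the identity $\#\Hom_{C_2}(C,U_R(\sop^\infty))=\#\Ext^1_{C_2}(C,U_R(\sop^\infty))$ (Lemma \ref{lem:HomExt}). For $C\in\cG^-_{\cpt}$ this is the classical fact for finite abelian groups; but for $C\in\cG^+_{\cpt}$, which has an $\R/\Z$ connected component, it is a genuinely delicate statement about the compact $C_2$-module category, and the paper proves it using the $\pm$-eigenspace decomposition of $U_R(\sop^\infty)^\vee$, oddness of $\sop$, and the fact that $g$ acts by $-1$ on $C^\vee$. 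You cannot simply invoke it. Without it, you cannot conclude that $1/\#\Aut_{\ring}\Theta$ is proportional to orbit size, which is what all of items (c)--(e) rest on. Your treatment of item (a) is roughly right in spirit, but the precise conversion to $\mu^{+}_{\CL}$ also needs the explicit automorphism count $\#\Aut C = 2\cdot\#\Aut\myfunc^0(C)\cdot\#\myfunc^0(C)$ (Lemma \ref{lem:sizeautC}), which supplies the extra $1/\#\Cl_F$-weight.

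For item (b) your stated reason for uniformity --- that $\Aut_{\ring}(\Theta)$ acts transitively on some set of embeddings --- is not the argument and would not by itself give equidistribution in $U_-$. What is actually needed is the canonical map $\varphi\colon\Ext^1_{C_2}(C,U_R(\sop^\infty))\to\Hom_{C_2}(\Lambda,U_R(\sop^\infty))$ constructed in Section \ref{sec:functors}, together with two facts proved there: $\varphi(\myseq_F^{\Ara}(\fm))$ is the natural reduction map $\cO_F^\times/\mu_F\to\frac{(\cO_F/\fm_{\f})^\times\times\{\pm1\}^{\fm_\infty}}{(\mu_F)_{\bmod\fm}}$ (Proposition \ref{prop:reductions}), which is what identifies the invariant "image of the fundamental unit" with a functorial invariant of the sequence, and $\varphi_{\sop}$ is a surjective group homomorphism (Proposition \ref{prop:surjhom}), whence its fibres are uniform. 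Similarly, for item (e) you need a fibre-counting statement for the refinement map $\pairsmap$ to pairs $(\psi,\phi)$ (Proposition \ref{prop:extpreimage}: $\pairsmap$ is surjective, and preimages of $(\psi,\phi)$ and $(\psi,\phi')$ have the same size); without it you cannot condition on $Q$ and still conclude uniformity over $\Ext^1_{\Z_{\sop}[C_2]}(A,Q)$. These are the genuinely nontrivial ingredients your sketch omits, and they are not recoverable from the general Cohen--Lenstra bookkeeping that you (correctly) identify as the ambient framework.
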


It may seem quite surprising that while, as we mentioned above, the sequence
$\myseq^{\Ara}_F(\fm)$ is attractive for statistical purposes precisely \emph{because} 
its left hand term does not vary in families, the sequence ``knows'' the image of 
$\cO_F^\times$ in $(\cO_F/\fm_{\f})^\times$. We will investigate this phenomenon in
the general context of exact sequences of compact groups, without reference to
number fields, in Section \ref{sec:functors}.

\subsection{Notation}
If $C$ is a locally compact abelian group, then $C^\vee$ will denote its
Pontryagin dual $\Hom_{\cts}(C,\R/\Z)$.
Throughout the paper, $\sop$ will be a finite set of odd prime numbers. 
If $C$ is an abelian group, then
$C(\sop^\infty)$ denotes the maximal $S^\infty$-torsion subgroup of $C$,
i.e. the subgroup generated by all elements whose orders are
products of primes in $\sop$; and we abbreviate $\Z_{\sop}\otimes_{\Z}C$ to $C_\sop$.
We always give $C(\sop^\infty)$ the discrete topology, not the subspace topology from $C$.
Note that for every compact Hausdorff abelian group $C$, we have a canonical isomorphism
$(C^\vee)_{\sop} = (C(\sop^\infty))^\vee$. Moreover, if $C$ is a finite abelian group,
then the quotient map $C\to C_\sop$ induces a canonical isomorphism $C(\sop^\infty)\cong C_\sop$.

For the convenience of the reader, we collect the notation that will remain in
force throughout the paper. Unless otherwise stated, $F$ will denote an arbitrary
number field.
\newpage
\begin{tabular}{l|l}
  $\Cl_F(\fm)$ & the ray class group of $F$ with modulus $\fm$\\
  $\Cl_F$ & $\Cl_F(\cO_F,\{\})$, the ideal class group of $F$\\
  $\Pic^0_F(\fm)$ & the Arakelov ray class group of $F$ with modulus $\fm$,\\
                & as defined in \S \ref{sec:introgoodprimes}\\
  $\Pic^0_F$ & $\Pic^0_F(\cO_F,\{\})$, the Arakelov class group of $F$\\
  $\myseq_F^{\fin}(\fm)$ & the ray class group exact sequence (\ref{eq:SeqRay})\\
  $\myseq_F^{\Ara}(\fm)$ & the Arakelov ray class group exact sequence (\ref{eq:SeqRayArak})\\
  $\cG_{\cpt}^-$ & full set of isomorphism class representatives of finite groups,\\
                 & with $C_2$ acting by $-1$\\
  $\cG_{\cpt}^+$ & full set of isomorphism class representatives of groups of the\\
                 & form $C\oplus \R/\Z$ for $C\in \cG_{\cpt}^-$, with $C_2$ acting by $-1$\\
  $\cG_{\sop}^{\pm}$ & full set of isomorphism class representatives of $C_2$-modules of\\
                     & the form $C(\sop^\infty)$ for $C\in \cG_{\cpt}^{\pm}$\\
  $U_R$              & $(\cO_A/\fm_{\f})^{\times}/\{\pm 1\}$, where $A$ is a product of quadratic \'etale\\
                     & $\Q_p$-algebras over $p\in \sop$\\
  $\cG_{R,\bullet}^{\pm}$ & full set of isomorphism class representatives (as defined in\\
  {\tiny $\bullet=\cpt$ or $\sop$} & Section \ref{sec:mainheuristic}) of extensions of $C$ by $U_R(\sop^\infty)$ for $C\in \cG_{\bullet}^{\pm}$
\end{tabular}

\begin{acknowledgements}
  We are very grateful to Robin Ammon for numerous corrections and comments on a preliminary draft.
  Part of this work was done while the second author was a guest at MPIM, Bonn. 
  He thanks the institute for financial support and for the inspiring atmosphere.
  Most of this research was funded by EPSRC Fellowship EP/P019188/1,
  `Cohen–Lenstra heuristics, Brauer relations, and low-dimensional manifolds'.
\end{acknowledgements}

\section{Arakelov ray class group}
\subsection{Definitions and basic properties}\label{sec:defs}
We continue using the notation of Sections \ref{sec:ray} and \ref{sec:introgoodprimes}.
We have an isomorphism
$$
(F\otimes_{\Q}\R)^\times \cong \prod_{v\in S_{\infty}}F_v^\times = \prod_{v\in S_{\infty}}(\c(F_v^\times)\times\R_{>0}),
$$
where $F_v$ denotes completion of $F$ at $v$, and $\c(F_v^\times)$ denotes the
maximal compact subgroup of $F_v^\times$, which is equal to $\{\pm 1\}$ if $v$
is real, and to the unit circle in $F_v$ if $v$ is complex. This induces the
isomorphism $\overline{F_\R^\times}\cong (\R_{>0})^{S_{\infty}}$ mentioned in
Section \ref{sec:introgoodprimes}. Let the map $\overline{F_\R^\times}\to \R_{>0}$
induced by the absolute value of the $\R$-algebra norm be denoted by $\AbsNm$.
Explicitly, it is given on ${\bfa}=(a_v)_v\in (\R_{>0})^{S_{\infty}}$ by
$\bfa \mapsto \prod_{v\in S_{\infty}}a_v^{\delta_v}$, where $\delta_v=1$ if $v$
is real, and $\delta_v=2$ if $v$ is complex. The projection
$\Id_F(\fm)\times_{\R_{>0}}\overline{F_{\R}^\times}\to \Id_F(\fm)$ induces a
surjective map $\Pic_F^0(\fm)\to \Cl_F(\fm)$. The preimage in $F^1(\fm)$ of
$1\times \ker \AbsNm\subset \Id_F(\fm)\times_{\R_{>0}}\overline{F_{\R}^\times}$
is $\cO_F^1(\fm)=\cO_F^\times\cap F^1(\fm)$, which is 
a finite index subgroup of $\cO_F^\times$, and Dirichlet's unit theorem implies 
that the quotient $(1\times \ker \AbsNm)/\im \cO_F^1(\fm)$ is a torus.
Write subscript $\mymod \fm$ to denote the image in $(\cO_F/\fm_{\f})^\times\times\{\pm1\}$.
It follows from the observations just made that the exact sequences
$\myseq_F^{\fin}(\fm)$ and $\myseq_F^{\Ara}(\fm)$ form part of a commutative diagram of
$\Aut(F)$-modules
\begin{eqnarray*}
  \xymatrix{
    & 0\ar[d] & 0\ar[d] & 0\ar[d] &\\
  0\ar[r] & \frac{(\cO_F^\times)_{\mymod \fm}}{(\mu_F)_{\mymod \fm}}\ar[d]\ar[r] & \bT_F(\fm)\ar[d]\ar[r] & \bT_F\ar[d]\ar[r] & 0\\
  0\ar[r] & \frac{(\cO_F/\fm_{\f})^\times\times\{\pm1\}^{\fm_{\infty}}}{(\mu_F)_{\mymod \fm}}\ar[d]\ar[r] & \Pic_F^0(\fm)\ar[d]\ar[r] & \Pic_F^0\ar[d]\ar[r] & 0\\
  0\ar[r] & \frac{(\cO_F/\fm_{\f})^\times\times\{\pm1\}^{\fm_{\infty}}}{(\cO_F^\times)_{\mymod \fm}}\ar[d]\ar[r] & \Cl_F(\fm)\ar[d]\ar[r] & \Cl_F\ar[d]\ar[r] & 0\\
          & 0 & 0 & 0 &
  }
\end{eqnarray*}
with exact rows and columns, where $\bT_F(\fm) = \cO_F^1(\fm)\otimes_{\Z}\R/\Z$, and again 
$\fm$ is dropped from the notation if it is equal to $(\cO_F,\{\})$. We will denote the first 
exact row of the diagram by $\myseq_F^{\tori}(\fm)$, and the entire diagram by $\mydiag_F(\fm)$. 
In summary, $\mydiag_F(\fm)$ is a short exact sequence of exact sequences
\begin{eqnarray}\label{eq:DiagRayArak}
  \mydiag_F(\fm)\colon 0\to \myseq_F^{\tori}(\fm) \to \myseq_F^{\Ara}(\fm) \to \myseq_F^{\fin}(\fm)\to 0.
\end{eqnarray}
In Section \ref{sec:functors} we will explain how to recover the exact sequences
$\myseq^{\fin}_F(\fm)$ and $\myseq^{\tori}_F(\fm)$, and, in fact, the whole of \eqref{eq:DiagRayArak}
in a canonical way from the exact sequence $\myseq^{\Ara}_F(\fm)$ of compact groups.

\subsection{Quadratic forms and modular geodesics}\label{sec:geodesics}
This subsection is independent of the rest of the paper. We begin by briefly
recalling Schoof's definition of a measure on each connected component of
an Arakelov class group. We then recall
a well-known construction that associates with each narrow ideal class of the ring
of integers of a real quadratic number field a geodesic on the modular surface
$Y(1)=\SL_2(\Z)\backslash \bH$, where $\bH$ is the hyperbolic upper half plane, 
on which $\SL_2(\Z)$ acts by isometries. After that we come to the main content of
the subsection, a reinterpretation of this construction
in terms of Arakelov class groups: we define natural maps from narrow
Arakelov class groups of real quadratic fields to $Y(1)$ that send the connected
components of these Arakelov class groups isometrically to the corresponding
geodesics just mentioned.

It is not difficult to generalise the entire construction to more general
moduli, yielding, for every real quadratic field $F$ and every modulus
$\fm=(\fm_{\f},\fm_{\infty})$ with $\fm_{\infty}$ containing both
real places of $F$, isometries from the connected components of
$\Pic_F^0(\fm)$ to closed geodesics on $Y_1(\fm_{\f})$. Since this
discussion is somewhat tangential to the main thrust of the paper,
we leave this generalisation to the interested reader.

\subsubsection*{Narrow Arakelov class group} For the rest of this subsection,
let $F$ be a real quadratic field, let $D$ be its discriminant, and let $g$
denote the non-trivial automorphism of $F$. Let $\sigma\colon F\to \R$ be one
of the two real embeddings of $F$. For an element $\alpha$ of $F$, write
$\Norm(\alpha)=\alpha\cdot g(\alpha)$, and for an ideal $\fa$ of $\cO_F$ let
$\Norm(\fa)$ denote the ideal norm of $\fa$. The narrow class group $\Cl^+_F$
of $F$ is the ray class group with respect to the modulus
$\fm=(\fm_{\f},\fm_{\infty})$ with $\fm_{\f}=\cO_F$ and with $\fm_{\infty}$
containing both real embeddings of $F$. Similarly, the narrow Arakelov class
group $\Pic^{0,+}_F$ of $F$ is the Arakelov ray class group of $F$ with respect
to the same modulus. The quotient map
$F_{\R}^\times\to \overline{F_{\R}^\times}$ induces an isomorphism from
Schoof's oriented Arakelov class group $\widetilde{\Pic}^0_F$
\cite[\S 5]{Schoof} to $\Pic^{0,+}_F$. Denote its inverse by $\raytooriented$.

\subsubsection*{Metrics on connected components of Arakelov class groups} In \cite{Schoof}
Schoof defines a metric on every connected component of the (oriented) Arakelov
class group of a number field $K$. It is induced by the restriction of the trace
form from $K_{\R}$ to $K_{\R}^0=\ker(\AbsNm\colon K_{\R}\to \R_{>0})$, using
the fact that the latter $\R$-vector space may be canonically identified with
the tangent space at the identity of $\Pic^0_K$. Here we recall Schoof's definition
in explicit terms in the special case of $\Pic^{0,+}_F$, using the isomorphism
$\raytooriented$.

Let $I\in \Id_F$, and consider two arbitrary elements $p_1$, $p_2$ on the
connected component of $\Pic_F^{0,+}$ corresponding to the class of $I$ in
$\Cl_F$ (see the commutative diagram of Section \ref{sec:defs}). These elements
may be represented by
$$
(I,(u_1,\Norm(I)u_1^{-1})), \quad (I,(u_2,\Norm(I)u_2^{-1}))\in \Id_F\times_{\R_{>0}}\overline{F_{\R}^\times},
$$
respectively, where we are using the isomorphism $\overline{F_{\R}^\times}\cong
F_{\sigma}^\times/\{\pm1\} \times F_{g\sigma}^\times/\{\pm1\}$,
and where recall that, for an Archimedean place $v\in \{\sigma,g\sigma\}$ of $F$,
$F_v$ denotes the completion of $F$ at $v$ (see Section \ref{sec:introgoodprimes}).
Moreover, if $(I,(u_1',\Norm(I)u_1'^{-1}))$
is another element of $\Id_F\times_{\R_{>0}}\overline{F_{\R}^\times}$ representing $p_1$,
then there exist $\epsilon\in \cO_F^{\times,+}=\{e\in \cO_F^\times: \sigma(e)>0, g\sigma(e)>0\}$
such that $u_1=\sigma(\epsilon)u_1'$.
In \cite[\S 6]{Schoof} Schoof defines the distance between $p_1$ and $p_2$ to be
$$
d_{\rm Sch}(p_1,p_2) = \min_{\epsilon\in \cO_F^{\times,+}}\sqrt 2\cdot\left|\log\frac{\sigma(\epsilon)u_1}{u_2}\right|.
$$
For our purposes it will be convenient to rescale Schoof's metric. We define
$$
d(p_1,p_2) = \min_{\epsilon\in \cO_F^{\times,+}}2\cdot\left|\log\frac{\sigma(\epsilon)u_1}{u_2}\right|,
$$
whenever $p_1$, $p_2$ are elements of one connected component of $\Pic^0_F$, represented
as above.

Let $\fm$ be a modulus, and let $\pi(\fm)\colon \Pic^0_F(\fm)\to \Pic^0_F$ be the
natural covering map (see Section \ref{sec:defs}). The pullback under $\pi(\fm)$
of the metrics $d$ on the connected components of $\Pic^0_F$
defines metrics on the connected components of $\Pic_F^0(\fm)$, which we will also denote
by $d$. In other words, we give the connected components of $\Pic_F^0(\fm)$
the unique metrics that make the covering map $\pi(\fm)$ a local isometry.

\subsubsection*{Ideal classes and modular geodesics}
Recall that there is a bijection between $\Cl^+_F$ and the set of
$\SL_2(\Z)$-orbits of primitive integral binary quadratic forms of discriminant
$D$, defined as follows. We call a $\Z$-basis $(\alpha,\beta)$ of a fractional
ideal of $\cO_F$ \emph{positively ordered} if one has
$\sigma(g(\alpha)\beta-\alpha g(\beta))>0$. The bijection just mentioned is
given by sending the class of a fractional ideal $\fa$, with positively
ordered $\Z$-basis $\alpha$, $\beta$, to the orbit of the binary quadratic
form $f_{\fa}(x,y)=\frac{\Norm(\alpha x-\beta y)}{\Norm\fa}$.

\begin{remark}\label{rmrk:notcanon}
  In fact, this bijection is a group isomorphism, where the group
  operation on the set of orbits of binary quadratic forms of discriminant
  $D$ is given by Dirichlet composition. This isomorphism is not quite canonical:
  if $\sigma$ is replaced by the Galois conjugate embedding, then
  this results in the bijection being composed with the
  multiplication-by-$(-1)$ automorphism.
\end{remark}

Next, recall that to each $\SL_2(\Z)$-orbit of indefinite primitive integral
binary quadratic forms one assigns a closed geodesic on $Y(1)$ as follows.
To the orbit $[f]$ of such a form $f(x,y)=ax^2+bxy+cy^2$ with
discriminant $D$
one assigns the image $\gamma_{[f]}$ under the covering map $\bH\to Y(1)$ of
the geodesic half-circle in $\bH$ with end points $\frac{-b\pm \sqrt{D}}{2a}$.
This defines a bijection between the set of $\SL_2(\Z)$-orbits of
indefinite primitive integral binary quadratic forms and the
set of closed geodesics on $Y(1)$. This construction goes back at least to Artin \cite{Artin}.

In summary, combining the above bijections, we have assigned to each class in
$\Cl_F^+$ a closed geodesic in $Y(1)$. Specifically, if a fractional ideal
has positively oriented basis $\alpha$, $\beta$, then we have assigned to its
ideal class in $\Cl_F^+$ the image in $Y(1)$ of the geodesic half-circle
in $\bH$ with end points $\sigma(\beta/\alpha)$ and $\sigma(g(\beta/\alpha))$.
We will now give an Arakelov theoretic
reinterpretation of this construction.

\subsubsection*{Arakelov class groups and $Y(1)$}
Let $\FRtoC\colon F_{\R}\to \C$ be the $\R$-vector space isomorphism given by
sending the positive (with respect to $\sigma$) square root of $D$ to the
square root of $-D$ in the upper half plane. One may associate with each
element of $\widetilde{\Pic}_F^0$ an isometry class of sublattices of $F_\R$
as follows: with a class of $\widetilde{\Pic}_F^0$ with representative
$(I,u)\in \Id_F \times_{\R_{>0}}F_{\R}^\times$ one associates the lattice
$u^{-1}I$ (see \cite[\S 5]{Schoof}, but note that our sign conventions are
different from Schoof's).
Precomposing this with the isomorphism $\raytooriented$, postcomposing with
the isomorphism $\FRtoC$, and taking the homothety class of the resulting
lattice in $\C$ defines a map $\ourisometry$ from $\Pic^{0,+}_F$ to $Y(1)$.

\begin{proposition}\label{prop:ArakelovIsometry}
For every class $c$ in $\Cl_F^+$ the map $\ourisometry$ defines an isometry
from the connected component of $\Pic^{0,+}_F$ corresponding to $c$ with the
metric $d$ defined above to the geodesic $\gamma_{[f_{\fa}]}$ in $Y(1)$ with
the hyperbolic metric, where $\fa\in \Id_F$ is a representative of $c$.
\end{proposition}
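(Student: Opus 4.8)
The strategy is to compute both sides explicitly in coordinates and match them. Fix a fractional ideal $\fa$ representing the class $c\in\Cl_F^+$, with positively ordered $\Z$-basis $(\alpha,\beta)$, and set $\tau=\sigma(\beta/\alpha)$, $\bar\tau=\sigma(g(\beta/\alpha))$, so that by the discussion above the target geodesic $\gamma_{[f_\fa]}$ is the image in $Y(1)$ of the half-circle in $\bH$ with endpoints $\tau,\bar\tau$. First I would unwind the definition of $\ourisometry$: an element $p$ of the connected component of $\Pic^{0,+}_F$ over $c$ is represented by $(\fa,(u_1,\Norm(\fa)u_1^{-1}))\in\Id_F\times_{\R_{>0}}\overline{F_\R^\times}$; lifting along $\raytooriented$ to $\widetilde\Pic^0_F$ and applying the recipe $u\mapsto u^{-1}\fa$, then $\FRtoC$, one gets a concrete lattice in $\C$ spanned by $\FRtoC(u^{-1}\alpha)$ and $\FRtoC(u^{-1}\beta)$, whose homothety class is the point $\ourisometry(p)\in Y(1)$. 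Dividing by the first generator, this homothety class is represented by the ratio $\FRtoC(\beta)/\FRtoC(\alpha)$ twisted by the archimedean data in $u$; because $\FRtoC$ is $\R$-linear and diagonalises the two embeddings (sending $\sqrt D\mapsto\sqrt{-D}$), a short computation shows that the point in $\bH$ is of the form $\frac{\bar\tau\,|w|^2 + \tau}{\,|w|^2+1\,}$ — wait, more precisely it lies on the half-circle through $\tau$ and $\bar\tau$, with the position along the circle governed by $|u_1|$ (equivalently by the ratio of the two archimedean components of $u$). This already shows $\ourisometry$ maps the connected component into $\gamma_{[f_\fa]}$.

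Next I would check that the map is a bijection onto the geodesic and compute the metric distortion. The connected component of $\Pic^{0,+}_F$ over $c$ is, by the diagram of Section \ref{sec:defs}, a torsor under $\bT_F=\cO_F^{\times,+}\otimes\R/\Z$, i.e. a circle of circumference governed by the regulator; concretely it is parametrised by $\log(\sigma(\epsilon)u_1/u_2)$ modulo the lattice generated by $\log\sigma(\epsilon_0)$ for $\epsilon_0$ a generator of $\cO_F^{\times,+}$. On the target side, the geodesic $\gamma_{[f_\fa]}$ on $Y(1)$ is the image of the half-circle, and it is closed precisely because the automorph of the quadratic form — which corresponds to multiplication by $\epsilon_0$ — acts as a hyperbolic element of $\SL_2(\Z)$ fixing the endpoints $\tau,\bar\tau$ and translating along the circle by the hyperbolic length $2\log\sigma(\epsilon_0)$ (the standard relation between the fundamental automorph and the geodesic length). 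So I would parametrise the half-circle by a hyperbolic arc-length coordinate $s$ and show that under $\ourisometry$ the torsor coordinate on $\Pic^{0,+}_F$, namely $\log(\sigma(\epsilon)u_1/u_2)$ up to the action of units, maps to $s$ up to an additive constant and a scaling factor. The rescaling of Schoof's metric by $2$ instead of $\sqrt2$ is arranged precisely so that the derivative $ds/d(\log u_1)$ comes out to $1$: I expect to find that the hyperbolic metric $\frac{dx^2+dy^2}{y^2}$ restricted to the half-circle, pulled back along the substitution expressing the circle point in terms of $t=\log|u_1|$, equals $4\,dt^2$, matching $d(p_1,p_2)=2|\log(\sigma(\epsilon)u_1/u_2)|$.

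The main obstacle is the metric computation — getting the constant exactly right. The half-circle through $\tau,\bar\tau$ can be parametrised as $z(t)=m + \frac{\tau-\bar\tau}{2}\cdot\frac{e^{2t}-1}{e^{2t}+1} + i\,\frac{\tau-\bar\tau}{2}\cdot\frac{2e^{t}}{e^{2t}+1}$ or similar, where $m=(\tau+\bar\tau)/2$ is the centre; one then has to verify that $\left|\frac{dz}{dt}\right|/\mathrm{Im}\,z$ is the constant $2$, independently of the radius (radius-independence is automatic by scaling-invariance of the hyperbolic metric, so one may normalise $\tau-\bar\tau$ conveniently, e.g. to $2i$, turning the half-circle into the unit semicircle and $z(t)$ into $\tanh t + i\,\mathrm{sech}\,t$, for which $|z'(t)|=\mathrm{sech}\,t=\mathrm{Im}\,z$, giving speed $1$ — so in fact the natural parameter gives constant $1$ and I must be careful about which $t$ I use, namely $t=\log(u_1/u_2^{1/2}\cdots)$ with the right normalisation involving $\Norm(\fa)$, where the factor $2$ re-enters through $\delta_\sigma + \delta_{g\sigma}$ bookkeeping or through the relation $|u_1|\cdot|u_2| = \Norm(\fa)$). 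Tracking this factor carefully, together with checking equivariance under $\epsilon_0$ on both sides so that the bijection descends to the closed geodesic, is the crux; everything else is the routine unwinding of definitions in Schoof's setup and in the classical quadratic-form-to-geodesic correspondence. I would also remark, as in Remark \ref{rmrk:notcanon}, that the mild dependence on the choice of $\sigma$ is harmless since replacing $\sigma$ by its conjugate swaps $\tau\leftrightarrow\bar\tau$ and reverses orientation of the geodesic, which is an isometry.
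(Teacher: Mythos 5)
Your proposal is correct and takes essentially the same route as the paper's proof: unwind Schoof's lattice construction and $\FRtoC$ in coordinates, see that the image traces the half-circle in $\bH$ with endpoints $\sigma(\beta/\alpha)$, $\sigma(g(\beta/\alpha))$, and match the metric. The paper's only shortcut is that instead of your explicit arc-length parametrisation $m+\tfrac{\tau-\bar\tau}{2}(\tanh t+i\operatorname{sech} t)$ it writes the point as $\bigl(\begin{smallmatrix}\beta&\bar\beta\\\alpha&\bar\alpha\end{smallmatrix}\bigr)\cdot(u^2i/\Norm\fa)$ and uses the isometric $\PGL_2(\R)$ action to reduce the distance computation to the imaginary axis, where $d(u_1^2i,u_2^2i)=2|\log(u_1/u_2)|$ is immediate, so the factor of $2$ you rightly flag as the crux appears with no normalisation bookkeeping.
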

\begin{proof}
Let $\fa$ be a fractional
ideal in $F$, and let $\alpha$, $\beta$ be a positively oriented basis
of $\fa$. An arbitrary element of
the connected component of $\Pic^{0,+}_F$ corresponding to the class of $\fa$
in $\Cl_F^+$ is represented by an element of the form
$(\fa,(u^{-1},\Norm\fa\cdot u))\in \Id_F\times_{\R_{>0}}\overline{F_{\R}^\times}$, $u\in \R_{>0}$,
where we have used the isomorphism $\overline{F_{\R}^\times}\cong
F_{\sigma}^\times/\{\pm1\} \times F_{g\sigma}^\times/\{\pm1\}$.
The lattice in $F_\R=F_{\sigma}\times F_{g\sigma}$ that Schoof attaches
to such an element is
$$
\Lambda=\{(\sigma(au\alpha+bu\beta),\Norm\fa\cdot\sigma(au^{-1}g(\alpha)+bu^{-1}g(\beta)))\in F_{\R}: a,b\in \Z\}.
$$
To ease notation, given an element $\gamma$ of $F$, we write $\bar{\gamma}$
for the Galois conjugate $g(\gamma)$ of $\gamma$. Also, we will now drop
$\sigma$ from the notation, and will, for the rest of the proof, view all
elements of $F$ as elements of $\R$, using the embedding $\sigma$.
Then the image of $\Lambda$ under the $\R$-algebra isomorphism $\FRtoC\colon F_{\R}\to \C$
is
\begin{eqnarray*}
  \lefteqn{\Big\langle \frac{u\alpha+\Norm \fa\cdot u^{-1}\bar{\alpha}}{2} + \frac{u\alpha-\Norm \fa\cdot u^{-1}\bar{\alpha}}{2}i,}\\
& & \frac{u\beta+\Norm \fa\cdot u^{-1}\bar{\beta}}{2} + \frac{u\beta-\Norm \fa\cdot u^{-1}\bar{\beta}}{2}i\Big\rangle_{\Z}\subset \C.
\end{eqnarray*}
Treating, as usual,
the upper half plane $\bH$ as the moduli space of homothety classes of lattices in $\C$,
we see that the lattice $\Lambda$ corresponds to the point
$$
\frac{u\beta(1+i) + \Norm \fa u^{-1}\bar{\beta}(1-i)}{u\alpha(1+i) + \Norm \fa u^{-1}\bar{\alpha}(1-i)}\in \bH,
$$
where the fact that this is indeed a point in the upper half plane is easily seen
to follow from the hypothesis that the basis $\alpha$, $\beta$ of $\fa$ be
positively oriented.
Using the isometric action of $\PGL_2(\R)$ on $\bH$, we may rewrite this point as
$$
\begin{pmatrix}\beta&\bar{\beta}\\\alpha&\bar{\alpha}\end{pmatrix}\cdot u^2i/\Norm \fa.
$$
As $u$ runs from $0$ to $\infty$, this point traces out the geodesic half-circle
in $\bH$ with end points $\beta/\alpha$ and $\bar{\beta}/\bar{\alpha}$,
whose image in $Y(1)$ is exactly $\gamma_{[f_{\fa}]}$. Moreover,
given two such points, with $u=u_1$, respectively $u=u_2$, the hyperbolic
distance between them in $Y(1)$ is
$\min_{\epsilon\in \cO_F^{\times,+}}\left|\log\frac{(\epsilon\cdot u_1)^2}{u_2^2}\right|$,
as claimed.
\end{proof}

As mentioned at the top of the subsection, a similar construction, which we will not spell out,
yields isometries from the connected components of $\Pic^0_F(\fm)$ to closed geodesics on
the modular curve $Y_1(\fm_{\f})$, where $\fm=(\fm_{\f},\fm_{\infty})$ is any modulus with
$\fm_{\infty}$ containing both infinite places of $F$. Here, the metric on the connected components
of $\Pic^0_F(\fm)$ is the pull-back of the metric defined above under the finite covering
$\Pic^0_F(\fm)\to \Pic^{0,+}_F$.

\section{Connected component and group of components functors}\label{sec:functors}
In this section we will explain, in an abstract setting, how to recover the
sequences $\myseq_F^{\fin}(\fm)$ and $\myseq_F^{\tori}(\fm)$ and the whole diagram
\eqref{eq:DiagRayArak}, as well as the reduction map
$\cO_F^\times/\mu_F\to \frac{(\cO_F/\fm_{\f})^\times\times\{\pm1\}^{\fm_{\infty}}}{(\mu_F)_{\mymod \fm}}$ from the
exact sequence $\myseq_F^{\Ara}(\fm)$ of compact groups.

Let $\AbGp$, $\CAbGp$, $\CCAbGp$, and $\PAbGp$ be the categories of: abelian groups
(with the discrete topology),
compact Hausdorff abelian groups, connected compact abelian groups, and totally
disconnected compact Hausdorff (i.e. profinite) abelian groups, respectively.
There are covariant functors
\begin{eqnarray*}
  \myfunc_0\colon\CAbGp & \to & \CCAbGp\\
  \myfunc^0\colon\CAbGp & \to & \PAbGp,
\end{eqnarray*}
the first of these induced by sending a compact abelian group to the connected component of
the identity,
and the second induced by sending a compact abelian group to the
group of components.
Also, Pontryagin duality defines an involutive anti-equivalence of categories
$$
\vee\colon \CAbGp \leftrightarrow \AbGp.
$$

Next, let $\TAbGp$ and $\TFAbGp$ be full subcategories of $\AbGp$ consisting
of torsion abelian groups, respectively of torsion free abelian groups.
There are covariant functors
\begin{eqnarray*}
  \tors\colon \AbGp & \to & \TAbGp\\
  \torsfree\colon \AbGp & \to & \TFAbGp,
\end{eqnarray*}
the first taking an abelian group to its torsion subgroup,
and the second taking an abelian group to its maximal torsion-free quotient.

\begin{proposition}
We have canonical isomorphisms of functors
\begin{align*}
  \tors\circ \vee = \vee\circ \myfunc^0\colon & \CAbGp\to \TAbGp,\\
  \vee\circ \tors = \myfunc^0\circ\vee\colon &\AbGp\to \PAbGp,
\end{align*}
and
$$
  \torsfree\circ \vee = \vee\circ \myfunc_0,\;\;\; \vee\circ \torsfree = \myfunc_0\circ \vee.
$$
\end{proposition}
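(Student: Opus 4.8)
The plan is to deduce all four identities from the classical Pontryagin duality dictionary, using only that $\vee$ is an exact contravariant functor and the two standard facts that a compact Hausdorff abelian group is connected if and only if its Pontryagin dual is torsion-free, and is profinite if and only if its dual is torsion. Equivalently, $\vee$ restricts to anti-equivalences $\CCAbGp\leftrightarrow\TFAbGp$ and $\PAbGp\leftrightarrow\TAbGp$. Since the paper has already recorded that $\myfunc_0$, $\myfunc^0$, $\tors$, $\torsfree$ are functors into the stated subcategories, and that $\vee$ is an involutive anti-equivalence, nothing beyond these inputs is needed.

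First I would handle the pair starting from a compact group. For $C\in\CAbGp$ the identity component $\myfunc_0(C)$ is a closed, hence compact, subgroup, and the quotient $C/\myfunc_0(C)=\myfunc^0(C)$ is compact and totally disconnected, hence profinite; this gives a short exact sequence $0\to\myfunc_0(C)\to C\to\myfunc^0(C)\to 0$ in $\CAbGp$, natural in $C$. Applying $\vee$ yields a natural short exact sequence $0\to\myfunc^0(C)^\vee\to C^\vee\to\myfunc_0(C)^\vee\to 0$ in $\AbGp$ in which, by the dictionary, the subgroup $\myfunc^0(C)^\vee$ is torsion and the quotient $\myfunc_0(C)^\vee$ is torsion-free. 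A subgroup with torsion-free quotient necessarily contains the torsion subgroup, and this subgroup is itself torsion, so it equals $\tors(C^\vee)$; consequently the quotient equals $\torsfree(C^\vee)$. Because $\tors$ and $\torsfree$ are characterised by the universal properties of maximal torsion subobject and maximal torsion-free quotient, these identifications are canonical and natural in $C$, which is exactly $\tors\circ\vee=\vee\circ\myfunc^0$ and $\torsfree\circ\vee=\vee\circ\myfunc_0$.

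The remaining two identities are the mirror image. For $A\in\AbGp$ there is a natural short exact sequence $0\to\tors(A)\to A\to\torsfree(A)\to 0$; dualising gives $0\to\torsfree(A)^\vee\to A^\vee\to\tors(A)^\vee\to 0$ in $\CAbGp$, where now $\torsfree(A)^\vee$ is connected and $\tors(A)^\vee$ is profinite. The closed connected subgroup $\torsfree(A)^\vee$ is contained in the identity component $\myfunc_0(A^\vee)$, since that component contains every connected subgroup; conversely $\myfunc_0(A^\vee)$, being connected, has connected image in the totally disconnected quotient $\tors(A)^\vee$, hence maps to $0$ there, so it is contained in $\torsfree(A)^\vee$. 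Thus $\torsfree(A)^\vee=\myfunc_0(A^\vee)$ and $\tors(A)^\vee=\myfunc^0(A^\vee)$, naturally in $A$, which is the assertion $\vee\circ\torsfree=\myfunc_0\circ\vee$ and $\vee\circ\tors=\myfunc^0\circ\vee$. Alternatively one could feed $C=A^\vee$ into the first pair and invoke the double-duality natural isomorphism $\mathrm{id}\cong\vee\circ\vee$.

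I do not expect a genuine obstacle here; the one place that deserves care is making the \emph{naturality} of all four isomorphisms precise — namely that dualising the canonical short exact sequence on one side produces the canonical short exact sequence on the other side, and that the subobject (resp.\ quotient) singled out by the duality argument literally coincides, \emph{as a subfunctor}, with the one singled out by $\tors$, $\torsfree$, $\myfunc_0$, or $\myfunc^0$. This reduces to the uniqueness clauses of the relevant universal properties, which I would state in one sentence each rather than appeal to abstract generalities.
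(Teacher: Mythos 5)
Your argument is correct and is precisely the route the paper takes: it invokes the same two Pontryagin-duality facts (connected $\leftrightarrow$ torsion-free dual, totally disconnected $\leftrightarrow$ torsion dual) and lets the rest follow by dualising the canonical short exact sequences. The paper merely states these two facts and declares that the proposition "easily follows", so your write-up is a fleshed-out version of exactly the same proof.
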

\begin{proof}
Firstly, a compact Hausdorff abelian group is connected if and
only if its dual is torsion free \cite[Cor. 4 of Thm. 30]{PontrDual}.
%
%
Second, a compact Hausdorff abelian group 
is totally disconnected if and only if the dual is a torsion group
\cite[Cor. 1 of Thm. 30]{PontrDual}.
The statement easily follows from these two observations.
\end{proof}

\begin{remark}\label{rmk:nonexact}
None of the functors $\myfunc_0$, $\myfunc^0$, $\tors$, $\torsfree$ are exact. Indeed,
$\myfunc^0$ is right exact, but not left exact; while $\myfunc_0$ preserves
epimorphisms and monomorphisms, but does not preserve exactness:
for example the connected component of the kernel of an isogeny between two
tori is just the trivial group. Dually, $\tors$ is left exact, but not
right exact; while $\torsfree$ preserves monomorphisms and epimorphisms,
but does not preserve exactness: consider, for example, the effect
of $\torsfree$ on a map from a free abelian group to a finite group.
\end{remark}
It is thanks to Remark \ref{rmk:nonexact} and its consequences that the exact
sequence (\ref{eq:SeqRayArak}) ``knows'' about the image of the map
$\cO_F^\times/\mu_F\to \frac{(\cO_F/\fm_{\f})^\times\times\{\pm1\}^{\fm_{\infty}}}{(\mu_F)_{\mymod \fm}}$.
We will now make this precise.

Given an exact sequence
$$
\Theta\colon 0\to A\to B\to C\to 0
$$
in $\CAbGp$, define
$$
\myfunc_0(A,B) = \ker(\myfunc_0(B)\to \myfunc_0(C))
$$
and
$$
\myfunc^0(A,B) = \ker(\myfunc^0(B)\to \myfunc^0(C)).
$$
We then have natural exact sequences
\begin{align*}
  \myfunc_0^0(\Theta)\colon & 0\to \myfunc_0(A,B)\to A \to \myfunc^0(A,B)\to 0,\\
  \myfunc_0(\Theta) \colon & 0\to \myfunc_0(A,B)\to \myfunc_0(B)\to \myfunc_0(C)\to 0,\\
  \myfunc^0(\Theta) \colon & 0\to \myfunc^0(A,B)\to \myfunc^0(B)\to \myfunc^0(C)\to 0,
\end{align*}
and an exact sequence of exact sequences
$$
\mydiag(\Theta): 0\to \myfunc_0(\Theta)\to \Theta \to \myfunc^0(\Theta) \to 0.
$$
\begin{proposition}
  Let $F$ be a number field, and $\fm$ a modulus of $F$. Then we have
  $$
    \mydiag(\myseq_F^{\Ara}(\fm)) = \mydiag_F(\fm).
  $$
\end{proposition}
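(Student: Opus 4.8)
The plan is to unwind both sides of the claimed identity and check that they agree term by term, the key point being that the abstract functors $\myfunc_0$ and $\myfunc^0$ applied to the Arakelov ray class sequence reproduce exactly the connected-component and component-group structure already computed in Section~\ref{sec:defs}. First I would recall that $\Pic_F^0(\fm)$ and $\Pic_F^0$ are compact Hausdorff abelian groups, so that $\myfunc_0$ and $\myfunc^0$ apply to $\myseq_F^{\Ara}(\fm)$ in the sense defined above. The surjection $\Pic_F^0(\fm)\to\Pic_F^0$ has finite kernel $\frac{(\cO_F/\fm_{\f})^\times\times\{\pm1\}^{\fm_{\infty}}}{(\mu_F)_{\mymod\fm}}$; since a finite group is totally disconnected, $\myfunc_0$ of the kernel is trivial, and $\myfunc_0$ applied to the surjection is an isomorphism onto the connected component of $\Pic_F^0$. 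Concretely this connected component is $\bT_F=\cO_F^1\otimes_\Z\R/\Z$ (the torus identified via Dirichlet's unit theorem as in Section~\ref{sec:defs}), and likewise $\myfunc_0(\Pic_F^0(\fm))=\bT_F(\fm)=\cO_F^1(\fm)\otimes_\Z\R/\Z$. Thus $\myfunc_0(A,B)$ for $\Theta=\myseq_F^{\Ara}(\fm)$ is the kernel of $\bT_F(\fm)\to\bT_F$, which is precisely $\frac{(\cO_F^\times)_{\mymod\fm}}{(\mu_F)_{\mymod\fm}}$, the top-left entry of $\mydiag_F(\fm)$. This identifies $\myfunc_0(\Theta)$ with the row $\myseq_F^{\tori}(\fm)$.

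Next I would treat $\myfunc^0$. Taking groups of components: $\myfunc^0$ of the finite kernel is the kernel itself, $\myfunc^0(\Pic_F^0)=\Cl_F$ (the group of components of the Arakelov class group is the usual class group, cf.\ \cite{BL},\cite{Schoof}), and $\myfunc^0(\Pic_F^0(\fm))=\Cl_F(\fm)$. Hence $\myfunc^0(\Theta)$ is exactly the bottom row $\myseq_F^{\fin}(\fm)$, and $\myfunc^0(A,B)$ is the image of the finite kernel in $\Cl_F(\fm)$, i.e.\ $\frac{(\cO_F/\fm_{\f})^\times\times\{\pm1\}^{\fm_{\infty}}}{(\cO_F^\times)_{\mymod\fm}}$. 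Then the sequence $\myfunc_0^0(\Theta)$ becomes $0\to\frac{(\cO_F^\times)_{\mymod\fm}}{(\mu_F)_{\mymod\fm}}\to\frac{(\cO_F/\fm_{\f})^\times\times\{\pm1\}^{\fm_{\infty}}}{(\mu_F)_{\mymod\fm}}\to\frac{(\cO_F/\fm_{\f})^\times\times\{\pm1\}^{\fm_{\infty}}}{(\cO_F^\times)_{\mymod\fm}}\to 0$, which is exactly the left-hand column of $\mydiag_F(\fm)$ — this is the place where the ``non-exactness'' phenomenon of Remark~\ref{rmk:nonexact} is visibly doing its job, since the sequence $\myseq_F^{\Ara}(\fm)$ is recovering the reduction map $\cO_F^\times/\mu_F\to\frac{(\cO_F/\fm_{\f})^\times\times\{\pm1\}^{\fm_{\infty}}}{(\mu_F)_{\mymod\fm}}$ purely formally. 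Finally, the exact sequence of exact sequences $\mydiag(\Theta):0\to\myfunc_0(\Theta)\to\Theta\to\myfunc^0(\Theta)\to 0$ then matches $\mydiag_F(\fm):0\to\myseq_F^{\tori}(\fm)\to\myseq_F^{\Ara}(\fm)\to\myseq_F^{\fin}(\fm)\to 0$ row by row, and one checks the connecting maps agree because they are in each case induced by the identity on $\Pic_F^0(\fm)$ and on its sub/quotient objects.

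The routine-but-necessary part is verifying naturality and compatibility of all the identifications simultaneously, i.e.\ that the isomorphisms of individual terms assemble into an isomorphism of the whole diagram $\mydiag(\Theta)\cong\mydiag_F(\fm)$ of $\Aut(F)$-modules rather than merely a term-by-term isomorphism; this follows because every identification used is induced by the canonical maps among $\Pic_F^0(\fm)$, $\Pic_F^0$, $\Cl_F(\fm)$, $\Cl_F$, $\bT_F(\fm)$, $\bT_F$ and their finite kernels, all of which are $\Aut(F)$-equivariant by construction. The main obstacle, such as it is, is the bookkeeping needed to show that the ``natural exact sequences'' produced by the abstract functorial recipe coincide \emph{on the nose}, not just up to canonical isomorphism, with the concrete rows and columns of $\mydiag_F(\fm)$ as written in Section~\ref{sec:defs}; once one fixes the convention that $\myfunc_0$ of a compact abelian group is literally its identity component (a closed subgroup) and $\myfunc^0$ is literally the quotient by it, the coincidences above are forced, and there is no genuine difficulty beyond careful diagram chasing.
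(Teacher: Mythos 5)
Your approach is essentially the same as the paper's: identify $\myfunc_0(\Pic_F^0(\fm))=\bT_F(\fm)$ and $\myfunc^0(\Pic_F^0(\fm))=\Cl_F(\fm)$ (since $\bT_F(\fm)$ is connected and $\Cl_F(\fm)$ is finite), and then unwind the abstract recipe into the concrete rows and columns of $\mydiag_F(\fm)$; the paper compresses the unwinding into ``the result follows easily,'' which you spell out.

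One sentence, however, is wrong and is worth flagging precisely because it contradicts a central point of the section. You assert that ``since a finite group is totally disconnected, $\myfunc_0$ of the kernel is trivial, and $\myfunc_0$ applied to the surjection is an isomorphism onto the connected component of $\Pic_F^0$.'' The inference from $\myfunc_0(\ker)=0$ to injectivity of $\myfunc_0$ of the surjection is exactly the fallacy that Remark~\ref{rmk:nonexact} warns against: $\myfunc_0$ is not left exact, and the induced map $\bT_F(\fm)\to\bT_F$ is an isogeny, not an isomorphism --- its kernel is $\frac{(\cO_F^\times)_{\mymod\fm}}{(\mu_F)_{\mymod\fm}}$, as you yourself correctly identify in the very next sentence. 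So the sentence should say surjective rather than isomorphism, or better, should simply be dropped: the correct argument is that $\myfunc_0$ preserves epimorphisms (giving surjectivity of $\bT_F(\fm)\to\bT_F$), and then one computes the kernel directly. Everything downstream of that sentence is correct and consistent with the paper.
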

\begin{proof}
  The torus $\bT_F(\fm)$ is connected, and the quotient $\Pic_F^0(\fm)/\bT_F(\fm)=\Cl_F(\fm)$
  is finite, so we have $\myfunc_0(\Pic_F^0(\fm))=\bT_F(\fm)$ and $\myfunc^0(\Pic_F^0(\fm))=\Cl_F(\fm)$.
  The result follows easily.
\end{proof}

Dually, given an exact sequence
$$
\Phi\colon 0\to X\to Y\to Z\to 0
$$
in $\AbGp$, define 
$$
\tors(Y,Z) = \coker(\tors(X)\to \tors(Y)),
$$
and 
$$
\torsfree(Y,Z) = \coker(\torsfree(X)\to \torsfree(Y)).
$$
We then have natural exact sequences
\begin{align*}
  \tors-\torsfree(\Phi)\colon & 0\to \tors(Y,Z)\to Z\to \torsfree(Y,Z)\to 0,\\
  \tors(\Phi)\colon & 0\to \tors(X)\to \tors(Y)\to \tors(Y,Z)\to 0,\\
  \torsfree(\Phi) \colon & 0\to \torsfree(X)\to \torsfree(Y)\to \torsfree(Y,Z)\to 0,
\end{align*}
and an exact sequence of exact sequences
$$
\mydiag^\vee(\Phi)\colon 0\to \tors(\Phi)\to \Phi\to \torsfree(\Phi)\to 0.
$$
One immediately verifies that if $\Theta$ is an exact sequence in $\CAbGp$,
then one has
$$
(\mydiag(\Theta))^\vee = \mydiag^\vee(\Theta^\vee).
$$

We now explain how to explicitly recover the reduction map
$\cO_F^\times/\mu_F\to \frac{(\cO_F/\fm_{\f})^\times\times\{\pm1\}^{\fm_{\infty}}}{(\mu_F)_{\mymod \fm}}$
from the exact sequence of compact groups $\myseq_F^{\Ara}(\fm)$.

Let $A$ be a finite abelian group and $C$ a compact abelian real Lie group.
One may define the group $\Ext^1(C,A)$, parametrising short exact sequences
$$
0\to A\to B\to C\to 0
$$
in $\CAbGp$ (see \cite{Moskowitz}, where the relevant homological algebra
is developed in much greater generality). It is canonically isomorphic to
$\Ext^1(A^\vee,C^\vee)$, and for our purposes one may take the latter to be its definition.

Let $\myfunc_0(C) = V/\Lambda$, where $V$ is a finite-dimensional real vector space,
and $\Lambda\subset V$ is a full rank sublattice. Let $\Lambda^*=\Hom(\Lambda,\Z)$.
There is a canonical isomorphism $\torsfree(C^\vee) = \Lambda^*$. By the universal
coefficient theorem \cite[Ch. 5, \S 6, Exercise 5]{Brown}, there is a canonical short exact sequence
$$
0\to \Ext^1(A^\vee,\Lambda^*) \to H^2(A^\vee,\Lambda^*) \to \Hom(\wedge^2 A^\vee, \Lambda^*)\to 0.
$$
More explicitly, the first map is given by sending an extension to the corresponding commutator
pairing, whence the exactness in the first and in the second term is obvious.
Since $\wedge^2A^\vee$ is finite and $\Lambda^*$ is torsion-free, the last term in that
sequence is $0$, and the first map is therefore an isomorphism. Next, from the long
exact cohomology sequence associated with the short exact sequence
$$
0\to \Lambda^* \to \Lambda^*\otimes_{\Z}\R\to \Lambda^*\otimes_{\Z}(\R/\Z) \to0
$$
one obtains canonical isomorphisms
$$
H^2(A^\vee,\Lambda^*) = H^1(A^\vee, \Lambda^*\otimes_{\Z}(\R/\Z))=\Hom(A^\vee, \Lambda^*\otimes_{\Z}(\R/\Z))=\Hom(\Lambda,A),
$$
where the first equality follows from the fact that $\Lambda^*\otimes_{\Z}\R$
is an injective $\Z$-module, and the last equality is given by Pontryagin duality.
Composing the natural map $\Ext^1(C,A)\to \Ext^1(\myfunc_0(C),A)=\Ext^1(A^\vee,\Lambda^*)$
with the above isomorphisms gives a natural map
$$
\varphi\colon \Ext^1(C,A)\to \Hom(\Lambda,A).
$$
We can give a more direct description of $\varphi$ as follows. Let
$$
  0\to A\to B\to C\to 0
$$
be a short exact sequence in $\CAbGp$ representing a class
$\Theta\in \Ext^1(C,A)$, and let $V_B$ be a universal cover of $\myfunc_0(B)$. The map
$\myfunc_0(B) \twoheadrightarrow \myfunc_0(C)$ lifts to an isomorphism
$f\colon V_B\to V$. The inverse map $f^{-1}$, restricted to $\Lambda\subset V$,
defines an element of $\Hom(\Lambda,A)$, and $\varphi(\Theta)$
is that element. The following is now immediate.
\begin{proposition}\label{prop:reductions}
  Let $F$ be a number field, and $\fm$ a modulus of $F$. Then the map
  $\varphi(\myseq_F^{\Ara}(\fm))$ is the natural reduction map
  $\cO^\times_F/\mu_F \to \frac{(\cO_F/\fm_{\f})^\times\times\{\pm1\}^{\fm_{\infty}}}{(\mu_F)_{\mymod \fm}}$.
\end{proposition}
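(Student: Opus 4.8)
The plan is to trace through the explicit description of the map $\varphi$ given just before the proposition, specialized to the Arakelov ray class sequence $\myseq_F^{\Ara}(\fm)$, and to match the resulting element of $\Hom(\Lambda,A)$ with the reduction map. First I would record the relevant identifications. Here $C=\Pic_F^0$, $A = \frac{(\cO_F/\fm_{\f})^\times\times\{\pm1\}^{\fm_{\infty}}}{(\mu_F)_{\mymod\fm}}$, and $B=\Pic_F^0(\fm)$. By the previous proposition, $\myfunc_0(B)=\bT_F(\fm) = \cO_F^1(\fm)\otimes_{\Z}\R/\Z$ and $\myfunc_0(C)=\bT_F = \cO_F^1\otimes_{\Z}\R/\Z$, so a universal cover of $\myfunc_0(B)$ is $V_B = \cO_F^1(\fm)\otimes_{\Z}\R$, while $V=\myfunc_0(C)$ has universal cover $\cO_F^1\otimes_{\Z}\R$ with lattice $\Lambda = \cO_F^1$ (the image of the units of norm one, i.e. a finite-index subgroup of $\cO_F^\times$ whose torsion-free quotient is $\cO_F^\times/\mu_F$ up to finite index; I would be careful here and either work with $\cO_F^1$ throughout or pass to $\cO_F^\times/\mu_F$ at the end, noting that $\Hom(\cO_F^1,A)=\Hom(\cO_F^\times/\mu_F,A)$ since $A$ is finite and $\cO_F^1$ has finite index — actually $\cO_F^1$ and $\cO_F^\times$ have the same rank so the inclusion induces an isomorphism after $\otimes\Q$, and one checks the integral statement directly).

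Next I would make the lifting map concrete. The surjection $\bT_F(\fm)\twoheadrightarrow\bT_F$ is induced by the inclusion $\cO_F^1(\fm)\hookrightarrow\cO_F^1$ tensored with $\R/\Z$; since this inclusion is a finite-index inclusion of lattices of the same rank, tensoring with $\R$ gives a canonical isomorphism $f\colon V_B = \cO_F^1(\fm)\otimes\R \xrightarrow{\sim} \cO_F^1\otimes\R = V$. Then $f^{-1}$ restricted to $\Lambda=\cO_F^1$ sends $u\in\cO_F^1$ to the element $u\in\cO_F^1\otimes\R$ viewed inside $\cO_F^1(\fm)\otimes\R = V_B$; the recipe says $\varphi(\Theta)(u)$ is the image of this element under $V_B \to \myfunc_0(B)=\bT_F(\fm) \hookrightarrow B$, which then lands in $A=\ker(B\to C)$. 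Concretely, for $u\in\cO_F^1$ one writes $u = u'/n$ with $u'\in\cO_F^1(\fm)$ (clearing denominators so that a power of $u$ lies in $\cO_F^1(\fm)$), and $\varphi(\Theta)(u)$ is the class in $A$ of the image of $u$ under the connecting/diagram maps. From the commutative diagram $\mydiag_F(\fm)$ in Section~\ref{sec:defs} — precisely the top two rows, $0\to \frac{(\cO_F^\times)_{\mymod\fm}}{(\mu_F)_{\mymod\fm}}\to\bT_F(\fm)\to\bT_F\to0$ sitting inside $\myseq_F^{\tori}(\fm)\hookrightarrow\myseq_F^{\Ara}(\fm)$ — this image is exactly the reduction of $u$ in $(\cO_F/\fm_{\f})^\times\times\{\pm1\}^{\fm_{\infty}}$ modulo $(\mu_F)_{\mymod\fm}$. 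So the whole content is bookkeeping with the diagram $\mydiag_F(\fm)$.

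The cleanest way to organize this, and the way I would actually write it, is: (1) invoke Proposition (the one identifying $\mydiag(\myseq_F^{\Ara}(\fm))=\mydiag_F(\fm)$) to identify $\myfunc_0$ and $\myfunc^0$ of the three groups; (2) observe that the kernel of $\bT_F(\fm)\to\bT_F$ is $\frac{(\cO_F^\times)_{\mymod\fm}}{(\mu_F)_{\mymod\fm}}$ sitting inside $A$, and that an element $u\in\cO_F^1 = \Lambda$, lifted to $V_B$ via $f^{-1}$ and pushed to $\bT_F(\fm)$, maps to precisely the reduction $\bar u\in\frac{(\cO_F^\times)_{\mymod\fm}}{(\mu_F)_{\mymod\fm}}\subset A$ — this is immediate from the description of $\bT_F(\fm)=\cO_F^1(\fm)\otimes\R/\Z$ and the natural map $\cO_F^1\otimes\R\to\cO_F^1(\fm)\otimes\R/\Z$; (3) conclude that $\varphi(\myseq_F^{\Ara}(\fm))$ is the reduction map, and note the compatibility with $\cO_F^\times/\mu_F$ via the finite-index inclusion $\cO_F^1\hookrightarrow\cO_F^\times$. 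The main obstacle — such as it is — is purely notational: being scrupulous about the three different incarnations of ``the unit lattice'' ($\cO_F^\times$, $\cO_F^\times/\mu_F$, $\cO_F^1$, $\cO_F^1(\fm)$) and checking that the canonical identification $f$ of their real tensor products is the one implicitly used in the statement, so that no spurious finite-index discrepancy creeps into the map on $\Hom(\Lambda,A)$. No genuinely hard step is involved; the proposition is essentially an unwinding of definitions once the previous two propositions are in hand, which is presumably why the authors call it ``immediate''.
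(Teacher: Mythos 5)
Your proposal is correct and matches the paper's (implicit) argument: the paper offers no proof beyond the phrase ``The following is now immediate,'' and what you have done is precisely the definition-unwinding that the authors deem immediate from the explicit description of $\varphi$ and the identification $\mydiag(\myseq_F^{\Ara}(\fm)) = \mydiag_F(\fm)$. One small notational caution: with $\fm=(\cO_F,\{\})$ the group you call $\cO_F^1$ is just $\cO_F^\times$, so $\Lambda$ is exactly $\cO_F^\times/\mu_F$ rather than some finite-index variant of it; this removes the hedge in your parenthetical, but the substance of the argument is unaffected.
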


Suppose now that, as above, $A$ is a finite abelian group, and $C$
is a compact abelian real Lie group, with $\myfunc_0(C)=V/\Lambda$,
where $V$ is a finite dimensional real vector space and $\Lambda\subset V$
is a full rank sublattice, and suppose in addition that $A$, $V$, and $C$ are equipped
with an action of $C_2$ by continuous group automorphisms such that $C_2\Lambda = \Lambda$
and such that the restriction of that action from $C$ to $\myfunc_0(C)$ is
induced by the action on $V$.
Then the above discussion applies
without change to these objects as $C_2$-modules rather than merely as groups,
and yields a map 
\begin{eqnarray}\label{eq:varphi}
  \quad\quad\varphi\colon \Ext^1_{C_2}(C,A)\to \Hom_{C_2}(\Lambda,A),
\end{eqnarray}
where $\Hom_{C_2}$ and $\Ext^1_{C_2}$ denotes homomorphisms, respectively $\Ext^1$, in the
category of topological $C_2$-modules.
Upon passing to $\sop^\infty$-torsion, we also get the analogous map
\begin{eqnarray}\label{eq:varphiS}
  \quad\quad\varphi_{\sop}\colon \Ext^1_{\Z_{\sop}[C_2]}(C(\sop^\infty),A(\sop^\infty))\to \Hom_{\Z_{\sop}[C_2]}(\Lambda_{\sop},A(\sop^\infty)).
\end{eqnarray}
We have the following result.

\begin{proposition}\label{prop:surjhom}
  The map $\eqref{eq:varphiS}$ is a surjective group homomorphism, whose kernel is the image
  of the map 
  $$
    \pi_C^*\colon \Ext_{\Z_{\sop}[C_2]}^1(\myfunc^0(C)(\sop^\infty),A(S^\infty))\to \Ext^1_{\Z_{\sop}[C_2]}(C(S^\infty),A(\sop^\infty))
  $$
  induced by the quotient map $\pi_C\colon C(\sop^\infty)\to \myfunc^0(C)(\sop^\infty)$.
\end{proposition}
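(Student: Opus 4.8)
The plan is to combine the short exact sequence of $\mathrm{Ext}$-groups that comes from the low-degree terms of a five-term exact sequence attached to the short exact sequence $0 \to \myfunc^0(C)(\sop^\infty) \to C(\sop^\infty) \xrightarrow{\pi_C} \cdots$ — wait, the quotient map goes the other way; let me re-orient. We have the defining short exact sequence $0 \to \myfunc_0(C) \to C \to \myfunc^0(C) \to 0$ of $C_2$-modules, which after passing to $\sop^\infty$-torsion and noting $\myfunc_0(C)(\sop^\infty) = \Lambda \otimes_\Z (\Q/\Z)(\sop^\infty) = \Lambda_\sop \otimes_{\Z_\sop} (\Q_\sop/\Z_\sop)$, gives a short exact sequence $0 \to \Lambda_\sop\otimes(\Q_\sop/\Z_\sop) \to C(\sop^\infty) \to \myfunc^0(C)(\sop^\infty) \to 0$ in the category of $\Z_\sop[C_2]$-modules. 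Applying $\mathrm{Hom}_{\Z_\sop[C_2]}(-, A(\sop^\infty))$ and $\mathrm{Ext}^*_{\Z_\sop[C_2]}(-, A(\sop^\infty))$ yields a long exact sequence whose relevant stretch is
\begin{equation*}
  \cdots \to \mathrm{Ext}^1_{\Z_\sop[C_2]}(\myfunc^0(C)(\sop^\infty), A(\sop^\infty)) \xrightarrow{\pi_C^*} \mathrm{Ext}^1_{\Z_\sop[C_2]}(C(\sop^\infty), A(\sop^\infty)) \to \mathrm{Ext}^1_{\Z_\sop[C_2]}(\Lambda_\sop\otimes(\Q_\sop/\Z_\sop), A(\sop^\infty)) \to \cdots
\end{equation*}
So I would first identify the restriction map in that sequence (to the maximal $\sop$-divisible submodule of $C(\sop^\infty)$) with the map $\varphi_\sop$ of \eqref{eq:varphiS}, up to the canonical isomorphisms already established in the discussion preceding Proposition \ref{prop:reductions}. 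This identification is really the content of the explicit "direct description" of $\varphi$ via lifting $\myfunc_0(B)\twoheadrightarrow\myfunc_0(C)$ to universal covers: restriction of an extension of $C(\sop^\infty)$ to its $\sop$-divisible part, followed by the chain of canonical isomorphisms $\mathrm{Ext}^1(\Lambda_\sop\otimes(\Q_\sop/\Z_\sop), A(\sop^\infty)) \cong \mathrm{Hom}(\Lambda_\sop, A(\sop^\infty))$ coming from the universal coefficient theorem / Pontryagin duality steps, is precisely $\varphi_\sop$.

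Granting that identification, exactness of the long exact sequence gives immediately that $\ker\varphi_\sop = \mathrm{im}\,\pi_C^*$. So the only real work is \textbf{surjectivity} of $\varphi_\sop$, equivalently the vanishing of the next term $\mathrm{Ext}^2_{\Z_\sop[C_2]}(\myfunc^0(C)(\sop^\infty), A(\sop^\infty))$ — or, more precisely, the vanishing of the image in it of the connecting map, but it is cleanest to prove the $\mathrm{Ext}^2$ group itself vanishes. Here is where I would use that $\sop$ consists of \emph{odd} primes: $\Z_\sop[C_2] = \prod_{p\in\sop}\Z_p[C_2]$, and for $p$ odd, $2$ is invertible in $\Z_p$, so $\Z_p[C_2] \cong \Z_p \times \Z_p$ as rings via the idempotents $\tfrac{1\pm g}{2}$. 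Hence $\Z_\sop[C_2]$ is a finite product of the rings $\Z_p$, each of which is a PID (indeed a DVR) of global dimension $1$. Therefore every $\Z_\sop[C_2]$-module has projective dimension at most $1$, so $\mathrm{Ext}^2_{\Z_\sop[C_2]}(-, -) = 0$ identically. That kills the obstruction and gives surjectivity.

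The main obstacle, such as it is, is bookkeeping rather than mathematics: making sure the long exact sequence is taken in the correct category (topological/discrete $\Z_\sop[C_2]$-modules, with $A(\sop^\infty)$ finite and the $\sop$-divisible module $\Lambda_\sop\otimes(\Q_\sop/\Z_\sop)$ discrete), and checking that the $\mathrm{Ext}^1$ appearing as the restriction target genuinely agrees with $\mathrm{Hom}(\Lambda_\sop, A(\sop^\infty))$ under the string of canonical isomorphisms already set up above — i.e. that the diagram
\begin{equation*}
  \xymatrix@C=3em{
    \mathrm{Ext}^1_{\Z_\sop[C_2]}(C(\sop^\infty), A(\sop^\infty)) \ar[r] \ar[dr]_{\varphi_\sop} & \mathrm{Ext}^1_{\Z_\sop[C_2]}(\Lambda_\sop\otimes(\Q_\sop/\Z_\sop), A(\sop^\infty)) \ar[d]^{\cong} \\
    & \mathrm{Hom}_{\Z_\sop[C_2]}(\Lambda_\sop, A(\sop^\infty))
  }
\end{equation*}
commutes. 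This is a direct unwinding of the two constructions and I would dispatch it by comparing the two descriptions of $\varphi$ (the homological-algebra one and the "lift to universal cover" one) given before Proposition \ref{prop:reductions}; both are manifestly functorial and natural in all arguments, and the commutation is then forced. Everything else — additivity of $\varphi_\sop$ in extension classes, compatibility with the $C_2$-action — is automatic from the fact that all the maps in sight are morphisms of abelian groups (indeed of $\Z_\sop$-modules) arising from a long exact sequence.
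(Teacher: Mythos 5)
Your proof is correct and follows essentially the same route as the paper. You make explicit the long exact sequence in $\Ext^*_{\Z_\sop[C_2]}(-,A(\sop^\infty))$ attached to $0 \to \myfunc_0(C)(\sop^\infty) \to C(\sop^\infty) \to \myfunc^0(C)(\sop^\infty) \to 0$, whereas the paper simply says the kernel identification is ``clear'' after writing $\varphi_\sop$ as the restriction map followed by a canonical isomorphism; these are the same observation. For surjectivity you invoke that $\Z_\sop[C_2] \cong \Z_\sop \times \Z_\sop$ has global dimension $1$, so $\Ext^2$ vanishes identically; the paper performs the identical idempotent decomposition and then cites the vanishing of $\Ext^2$ for abelian groups --- a special case of your global-dimension statement, so your phrasing is marginally more uniform but not a different argument.
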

\begin{proof}
  The map $\varphi_{\sop}$ is a composition of the two maps
  \begin{eqnarray*}
    \Ext^1_{\Z_{\sop}[C_2]}(C(\sop^\infty),A(\sop^\infty)) & \to & \Ext^1_{\Z_{\sop}[C_2]}(\myfunc_0(C)(\sop^\infty),A(\sop^\infty))\\
                                                           & \to & \Hom_{\Z_{\sop}[C_2]}(\Lambda_{\sop},A(\sop^\infty)).
  \end{eqnarray*}
  It follows from the discussion preceding Proposition \ref{prop:reductions} that
  the second of these is an isomorphism, and it suffices to argue that the first is surjective
  with the claimed kernel.

  The claim regarding the kernel is clear, and we now prove surjectivity.
  Let $g$ be the non-trivial element of $C_2$.
  We have ring isomorphisms $\Z_{\sop}[C_2]\to \Z_{\sop}[C_2]/(g-1)\times \Z_{\sop}[C_2]/(g+1)\cong \Z_{\sop}\times \Z_{\sop}$,
  and correspondingly every $\Z_{\sop}[C_2]$-module decomposes as a direct sum of two $\Z_{\sop}$-modules.
  Thus we may ignore the $C_2$-module structure and work in the category of abelian groups,
  so that surjectivity of the first map follows from the fact that $\Ext^2$ of finite groups vanishes
  \cite[Lemma 3.3.1]{Weibel}.
\end{proof}

We close the section by describing a set that conveniently parametrises the pairs
$(\myfunc_0(\Theta)(\sop^\infty),\myfunc^0(\Theta)_{\sop})$ for $\Theta\in \Ext^1_{C_2}(C,A)$,
and by proving a result that will be important in the proof of Theorem \ref{thm:realquad}.

Let $A$, $C$, $\Lambda$, $V$ be as above. Let $\pairs_{\sop}(C,A)$ be the set of pairs
$(\psi, \phi)$, where $\psi\in \Hom_{\Z_{\sop}[C_2]}(\Lambda_{\sop},A(\sop^\infty))$
and $\phi \in \Ext^1_{\Z_{\sop}[C_2]}(\myfunc^0(C)(\sop^\infty),A(\sop^\infty)/\Im\psi)$.
There is a natural map
$$
\pairsmap\colon \Ext^1_{\Z_{\sop}[C_2]}(C(\sop^\infty),A(\sop^\infty))\to \pairs_{\sop}(C,A)
$$
given by
sending $\Theta\in \Ext^1_{\Z_{\sop}[C_2]}(C(\sop^\infty),A(\sop^\infty))$ to the
pair consisting of $\psi=\varphi_{\sop}(\Theta)$ and of the class
$\phi=\myfunc^0(\Theta)(\sop^\infty)\in\Ext^1_{\Z_{\sop}[C_2]}(\myfunc^0(C)(\sop^\infty),A(\sop^\infty)/\Im\psi)$.

\begin{proposition}\label{prop:extpreimage}
The map $\pairsmap$ is surjective. Moreover, the preimages in $\Ext^1_{C_2}(C(\sop^\infty),A(\sop^\infty))$
of any two elements of $\pairs_{\sop}(C,A)$ of the form $(\psi,\phi)$ and $(\psi,\phi')$
have the same size.
\end{proposition}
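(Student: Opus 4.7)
The plan is to leverage the additive (Baer sum) structure on $\Ext^1$ together with Proposition \ref{prop:surjhom}. The key technical identity, which takes a little unpacking but is otherwise routine, is that for any $\Theta \in \Ext^1_{\Z_{\sop}[C_2]}(C(\sop^\infty), A(\sop^\infty))$ with $\varphi_{\sop}(\Theta) = \psi$ and any $\eta \in \Ext^1_{\Z_{\sop}[C_2]}(\myfunc^0(C)(\sop^\infty), A(\sop^\infty))$, one has
\[
\pairsmap(\Theta + \pi_C^*(\eta)) = (\psi,\, \phi + \bar\eta),
\]
where $\phi$ is the second coordinate of $\pairsmap(\Theta)$ and $\bar\eta$ is the image of $\eta$ in $\Ext^1_{\Z_{\sop}[C_2]}(\myfunc^0(C)(\sop^\infty), A(\sop^\infty)/\Im\psi)$. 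First coordinate preservation follows from $\varphi_{\sop} \circ \pi_C^* = 0$ as in Proposition \ref{prop:surjhom}; for the second coordinate, functoriality of pushout along $A(\sop^\infty) \twoheadrightarrow A(\sop^\infty)/\Im\psi$ reduces the check to the statement that $\myfunc^0(\pi_C^*(\eta)) = \eta$, which holds by construction of $\pi_C^*$ as a pullback along $\pi_C\colon C(\sop^\infty) \twoheadrightarrow \myfunc^0(C)(\sop^\infty)$.

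For surjectivity, given $(\psi, \phi) \in \pairs_{\sop}(C, A)$, Proposition \ref{prop:surjhom} provides $\Theta_0$ with $\varphi_{\sop}(\Theta_0) = \psi$; let $\phi_0$ be the second coordinate of $\pairsmap(\Theta_0)$. By the displayed identity, $\Theta_0 + \pi_C^*(\eta) \in \pairsmap^{-1}(\psi, \phi)$ as soon as $\eta$ satisfies $\bar\eta = \phi - \phi_0$. Such an $\eta$ exists precisely when
\[
\Ext^1_{\Z_{\sop}[C_2]}(\myfunc^0(C)(\sop^\infty), A(\sop^\infty)) \twoheadrightarrow \Ext^1_{\Z_{\sop}[C_2]}(\myfunc^0(C)(\sop^\infty), A(\sop^\infty)/\Im\psi)
\]
is surjective, which, via the long exact $\Ext$-sequence associated with $0 \to \Im\psi \to A(\sop^\infty) \to A(\sop^\infty)/\Im\psi \to 0$, reduces to the vanishing of $\Ext^2_{\Z_{\sop}[C_2]}(\myfunc^0(C)(\sop^\infty), \Im\psi)$. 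This vanishes by the exact same argument as at the end of the proof of Proposition \ref{prop:surjhom}: the idempotents $(1 \pm g)/2 \in \Z_{\sop}[C_2]$ reduce everything to $\Ext^2$ over $\Z_{\sop}$, which is zero since $\Z_{\sop}$ has global dimension one.

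For the equal-fiber assertion, pick $\Theta \in \pairsmap^{-1}(\psi, \phi)$ and $\Theta' \in \pairsmap^{-1}(\psi, \phi')$; both fibers are non-empty by surjectivity. Since $\varphi_{\sop}(\Theta' - \Theta) = 0$, Proposition \ref{prop:surjhom} gives $\Theta' - \Theta = \pi_C^*(\eta_0)$ for some $\eta_0$, and then the translation $X \mapsto X + \pi_C^*(\eta_0)$ is a bijection $\pairsmap^{-1}(\psi, \phi) \to \pairsmap^{-1}(\psi, \phi')$ with inverse $X \mapsto X - \pi_C^*(\eta_0)$, as one verifies directly from the displayed identity. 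The principal subtlety throughout is establishing that identity with care, because the target group in which the second coordinate lives varies with $\psi$ and the definition of the second coordinate involves an implicit descent from $\Ext^1(C(\sop^\infty), \cdot)$ to $\Ext^1(\myfunc^0(C)(\sop^\infty), \cdot)$; but since the identity only compares extensions sharing the same $\varphi_{\sop}$-value $\psi$, the target is fixed and the verification reduces to the functoriality of pullback and pushout.
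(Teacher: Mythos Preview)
Your proof is correct and follows essentially the same approach as the paper's: both arguments rest on the commutative square relating $(\pi_A)_*$ and $\pi_C^*$, the surjectivity of the pushforward maps via the vanishing of $\Ext^2_{\Z_{\sop}[C_2]}$, and Proposition \ref{prop:surjhom}. Your presentation via the explicit identity $\pairsmap(\Theta + \pi_C^*(\eta)) = (\psi, \phi + \bar\eta)$ and the resulting translation bijection between fibres is a slightly more hands-on reformulation of the paper's coset argument, but the underlying mechanism is the same.
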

\begin{proof}
%
  Let $(\psi,\phi)\in \pairs_{\sop}(C,A)$ be arbitrary, and let $\pi_A\colon A(\sop^\infty)\to A(\sop^\infty)/\Im\psi$
  and $\pi_C\colon C(\sop^\infty)\to \myfunc^0(C)(\sop^\infty)$ be the respective quotient maps. By the usual
  functoriality of $\Ext$, they induce maps in the commutative diagram
  \[
    \xymatrix{
      \Ext^1_{\Z_{\sop}[C_2]}(\myfunc^0(C)(\sop^\infty),A(S^\infty)) \ar[r]^{\pi_{C,1}^*}\ar[d]_{(\pi_{A})_*^1} & \Ext^1_{\Z_{\sop}[C_2]}(C(S^\infty),A(S^\infty))\ar[d]^{(\pi_{A})_*^2}\\
      \Ext^1_{\Z_{\sop}[C_2]}(\myfunc^0(C)(\sop^\infty),A(\sop^\infty)/\Im\psi)\ar[r]^{\pi_{C,2}^*} &
    \Ext^1_{\Z_{\sop}[C_2]}(C(\sop^\infty),A(\sop^\infty)/\Im\psi).
    }
  \]

  We claim that $(\pi_A)_*^1$ and $(\pi_A)_*^2$ are both surjective.
  Indeed, the cokernel of $(\pi_A)_*^1$ canonically embeds
  in $\Ext^2_{\Z_{\sop}[C_2]}((\Im\psi)^\vee,C^\vee(\sop^\infty))$,
  which vanishes by the same argument as in the proof of Proposition \ref{prop:surjhom},
  and the argument for $(\pi_A)_*^2$ is similar.

  It follows from the standard formalism of the $\Ext$ functor that the classes
  in $\Ext^1_{\Z_{\sop}[C_2]}(C(\sop^\infty),A(\sop^\infty))$ to which $\phi$ can be lifted are exactly
  those that are mapped, under $(\pi_A)_*^2$, to $(\pi_{C,2})^*(\phi)$. The 
  assertions therefore follow from the fact that $(\pi_A)_*^1$ and $(\pi_A)_*^2$ are surjective
  group homomorphisms and that, by Proposition \ref{prop:surjhom},
  the image of $\pi_{C,1}^*$ is exactly the kernel of $\phi_S$.
\end{proof}

\section{Arakelov ray class groups at the ``good'' primes}\label{sec:good}
In this section we prove the existence of the probability distribution used in
Heuristic \ref{he:realquad} and derive several concrete consequences of the heuristic,
in particular proving a precise version of Theorem \ref{thm:realquad}.

Let $\sop$, $\cG_{\cpt}^{\pm}$, $\cG_{\sop}^{\pm}$, $\fm_{\f}$, $A$, $R$, $U_R$, $\cG_{R,\sop}^{\pm}$,
$\cG_{R,\cpt}^{\pm}$, and $\bP^{\pm}$
be as in Section \ref{sec:mainheuristic}. Recall that
$\cG_{\cpt}^{\pm}$ is a full set of isomorphism class representatives of finite abelian groups
(when the sign is $-$), respectively of extensions of $\R/\Z$ be a finite abelian group (when the sign is $+$),
in both cases with $C_2$ acting by multiplication by $-1$, and $\cG_{\sop}^{\pm}$ is a full set
of isomorphism class representatives of $\Z_{(\sop)}[C_2]$-modules of
the form $C(\sop^\infty)$ for $C\in \cG_{\cpt}^{\pm}$.
Recall also that $\cG_{R,\cpt}^{\pm}$ and $\cG_{R,\sop}^{\pm}$
are full sets of isomorphism classes of extensions $\Theta$ of elements of $\cG_{\cpt}^{\pm}$,
respectively of $\cG_{\sop}^{\pm}$, by $U_R(\sop^\infty)$. Recall that
when defining the notion of isomorphism of such extensions, we only consider
$C_2$-module automorphisms of $U_R(\sop^\infty)$ that arise as restrictions of ring
automorphisms of $R$. Let $\Aut_{\ring}U_R(\sop^\infty)$ denote the group of all such
automorphisms $U_R(\sop^\infty)$. For $C\in \cG_{\sop}^{\pm}$, define, by analogy with the previous
section, $\myfunc_0(C)$ to be the maximal divisible subgroup
of $C$ (which is trivial if $C\in \cG^-$, and is isomorphic to $\prod_{p\in \sop}\Q_p/\Z_p$
if $C\in \cG^+$), and define $\myfunc^0(C)=C/\myfunc_0(C)$.

\subsection{Equidistribution predictions}\label{sec:equidistr}
For $C\in \cG_{\cpt}^{\pm}$ and for an extension $\Theta\in \cG_{R,\cpt}^\pm$ of $C$ by $U_R(\sop^\infty)$,
let $O(\Theta)$ be the orbit in $\Ext^1_{C_2}(C,U_R(\sop^\infty))$ of the class
of $\Theta$ under the natural action of $\Aut C \times \Aut_{\ring}U_R(\sop^\infty)$,
and similarly for $C\in \cG_{\sop}^{\pm}$.
Note that two extensions are in the same orbit under this action if and only
if they are isomorphic.

\begin{lemma}\label{lem:sizeautC}
Let $C\in\cG_{\cpt}^+$. Then one has
  \begin{eqnarray*} 
    \#\Aut C & = & 2\cdot \#\Aut \myfunc^0(C)\cdot \#\myfunc^0(C).
  \end{eqnarray*}
\end{lemma}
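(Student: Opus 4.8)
The plan is to exploit the splitting built into the definition of $\cG_{\cpt}^+$, namely $C\cong \myfunc_0(C)\oplus \myfunc^0(C)$ with $\myfunc_0(C)\cong \R/\Z$ and $\myfunc^0(C)$ finite, together with the elementary fact that $\myfunc_0(C)$, being the connected component of the identity, is preserved by every continuous automorphism of $C$. Since the $C_2$-action on $C$ is multiplication by $-1$, it is central in $\Aut C$ and commutes with every group automorphism, so $C_2$-equivariant automorphisms and plain automorphisms coincide and no equivariance bookkeeping is needed.

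First I would record the homomorphism
\[
  \rho\colon \Aut C\longrightarrow \Aut\myfunc_0(C)\times \Aut\myfunc^0(C)
\]
given by restriction to $\myfunc_0(C)$ and passage to the quotient $\myfunc^0(C)=C/\myfunc_0(C)$. I would then identify $\Aut\myfunc_0(C)=\Aut_{\cts}(\R/\Z)$ with $\{\pm 1\}$, for instance by Pontryagin duality using $(\R/\Z)^\vee=\Z$. Fixing a splitting $C=\myfunc_0(C)\oplus\myfunc^0(C)$, every pair $(\epsilon,\alpha)$ in the target of $\rho$ is the image of the automorphism acting as $\epsilon$ on the first summand and as $\alpha$ on the second; hence $\rho$ is surjective and $\#\im\rho=2\cdot\#\Aut\myfunc^0(C)$.

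Next I would compute $\ker\rho$. If $\phi\in\ker\rho$, then $\phi$ is the identity on $\myfunc_0(C)$ and induces the identity on $\myfunc^0(C)$, so $x\mapsto \phi(x)-x$ is a continuous homomorphism $C\to\myfunc_0(C)$ vanishing on $\myfunc_0(C)$, hence factors through an element $h\in\Hom_{\cts}(\myfunc^0(C),\myfunc_0(C))$. Conversely each such $h$ gives the automorphism $x\mapsto x+h(\bar x)$ (writing $\bar x$ for the image of $x$ in $\myfunc^0(C)$), which lies in $\ker\rho$; these assignments are mutually inverse, so $\ker\rho$ is in bijection with $\Hom_{\cts}(\myfunc^0(C),\myfunc_0(C))=\Hom(\myfunc^0(C),\R/\Z)=\myfunc^0(C)^\vee$, a group of order $\#\myfunc^0(C)$ since $\myfunc^0(C)$ is finite. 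Multiplying $\#\ker\rho$ by $\#\im\rho$ yields $\#\Aut C=2\cdot\#\Aut\myfunc^0(C)\cdot\#\myfunc^0(C)$, as claimed.

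There is no real obstacle here: the statement amounts to writing automorphisms of $C$ in ``upper triangular'' form relative to $0\to\myfunc_0(C)\to C\to\myfunc^0(C)\to 0$ and counting entries. The only points deserving a moment's care are the continuity inputs — that a continuous homomorphism from the connected group $\R/\Z$ to the finite group $\myfunc^0(C)$ is trivial (which is exactly what forces the ``lower-left entry'' to vanish and makes $\myfunc_0(C)$ characteristic), and that $\Aut_{\cts}(\R/\Z)=\{\pm 1\}$ — both of which are standard.
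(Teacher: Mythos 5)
Your proof is correct and takes essentially the same route as the paper: decompose $\Aut C$ along $0\to\myfunc_0(C)\to C\to\myfunc^0(C)\to 0$, get surjectivity of the restriction/quotient map from the splitting, and identify the kernel with $\Hom(\myfunc^0(C),\myfunc_0(C))\cong\myfunc^0(C)^\vee$. The paper phrases the splitting as a consequence of the injectivity of $\myfunc_0(C)\cong\R/\Z$ rather than of the definition of $\cG_{\cpt}^+$, but this is an immaterial difference.
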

\begin{proof}
  Since $\myfunc_0(C)$ is characteristic in $C$, we have a natural map
  $\Aut C \to \Aut \myfunc_0(C) \times \Aut \myfunc^0(C)$. Moreover,
  since $\myfunc_0(C)$ is an injective group (and $C_2$-module), the exact sequence
  \begin{eqnarray}\label{eq:extC}
    0\to \myfunc_0(C)\to C\to \myfunc^0(C)\to 0,
  \end{eqnarray}
  splits, so the map of automorphism groups just mentioned is surjective.
  Its kernel is canonically identified with $\Hom(\myfunc^0(C),\myfunc_0(C))$.
  Here the inclusion $\Hom(\myfunc^0(C),\myfunc_0(C))\hookrightarrow \Aut C$
  is given by $f\mapsto (x\mapsto x+f(\bar{x}))$, where, for $x\in C$, we denote
  by $\bar{x}$ the image of $x$ in $\myfunc^0(C)$.
  Now, $\myfunc_0(C)$ is isomorphic to $\R/\Z$, so we have $\#\Hom(\myfunc^0(C),\myfunc_0(C)) = \#\myfunc^0(C)$
  (recall that all our automorphisms and homomorphisms are $C_2$-equivariant, but
  that does not change the above conclusion, since the non-trivial element of $C_2$
  acts by $-1$ on $C$), and further we have $\#\Aut\myfunc_0(C) = 2$, which
  proves the lemma.
\end{proof}

\begin{lemma}\label{lem:HomExt}
  Let $C\in \cG_{\cpt}^{\pm}$. Then we have
  \begin{eqnarray}\label{eq:HomExt}
    \#\Hom_{C_2}(U_R(\sop^\infty)^\vee,C^\vee)=\#\Ext^1_{C_2}(U_R(\sop^\infty)^\vee,C^\vee).
  \end{eqnarray}
\end{lemma}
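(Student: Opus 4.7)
The plan is to reduce the identity to the classical fact that $\#\Hom(A,B)=\#\Ext^1(A,B)$ for finite abelian groups $A,B$, exploiting the odd-prime hypothesis on $\sop$ to handle the $C_2$-action.

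First, since $U_R(\sop^\infty)^\vee$ is finite and $\sop$-torsion, both $\Hom_{C_2}$ and $\Ext^1_{C_2}$ out of it depend only on the $\sop$-primary part of the target $C^\vee$. In the case $C\in\cG^-_{\cpt}$, $C^\vee$ is already finite; in the case $C\in\cG^+_{\cpt}$, one writes $C\cong\R/\Z\oplus C^{\rm tor}$ and argues that the dual $\Z^-$-summand contributes $0$ to both sides (the $\Hom$ vanishes trivially for torsion-to-torsion-free reasons; the $\Ext^1$ has to be shown to vanish as well, see below). In either case, the computation reduces to $\Hom$ and $\Ext^1$ of finite $\sop$-primary $C_2$-modules.

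Second, because $\sop$ consists of odd primes, $\Z_{\sop}[C_2]$ decomposes as a product $\Z_{\sop}^+\times\Z_{\sop}^-$ via the orthogonal idempotents $\tfrac{1\pm g}{2}$, so every $\Z_{\sop}[C_2]$-module splits canonically into $\pm$-eigenspaces and both $\Hom_{C_2}$ and $\Ext^1_{C_2}$ factor through this splitting. Because $C_2$ acts on $C$ by $-1$, only the $-$-eigenspaces contribute. Decomposing $U_R(\sop^\infty)^{\vee,-}$ into cyclic summands $\Z/p^e$, the claim reduces to the elementary identity $\#\Hom(\Z/p^e,M)=\#M[p^e]=\#(M/p^eM)=\#\Ext^1(\Z/p^e,M)$ for finite $M$, which follows from the vanishing Euler characteristic of the sequence $0\to M[p^e]\to M\xrightarrow{p^e}M\to M/p^eM\to 0$.

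The main obstacle is handling the case $C\in\cG^+_{\cpt}$, i.e.~showing that the connected component $\R/\Z$ of $C$ does not introduce an extra factor on the $\Ext^1$-side. After Pontryagin duality this amounts to matching the contributions of the $\Z^-$-summand of $C^\vee$ on both sides; I would handle this by working throughout in the category of $\Z_{\sop}[C_2]$-modules (replacing $\Z^-$ by $\Z_{\sop}^-$), where the $-$-idempotent realises $\Z_{\sop}^-$ as a direct summand of the free module $\Z_{\sop}[C_2]$, and by combining this with a dimension-shifting argument using the divisibility of $\prod_{p\in\sop}\Q_p/\Z_p$ to identify the relevant $\Ext^1$ with a vanishing cohomology group under the odd-prime hypothesis.
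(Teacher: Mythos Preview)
Your overall strategy coincides with the paper's: both split $U_R(\sop^\infty)^\vee=U_+\oplus U_-$ via the idempotents $\tfrac{1\pm g}{2}$ in $\Z_{\sop}[C_2]$, argue that the $U_+$-part contributes $0$ to both $\Hom_{C_2}$ and $\Ext^1_{C_2}$ (because $g$ acts by $-1$ on $C^\vee$), reduce the $U_-$-part to ordinary $\Hom$ and $\Ext^1$ of abelian groups, and then appeal to $\#\Hom(A,B)=\#\Ext^1(A,B)$ for finite $A,B$. For $C\in\cG_{\cpt}^-$ this is a complete argument.

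The gap is exactly where you locate it, but your proposed patch does not close it. For $C\in\cG_{\cpt}^+$ the summand $\Z^-$ (or $\Z_{\sop}^-$ after localising) of $C^\vee$ sits in the \emph{second} argument of $\Ext^1$, so the fact that $\Z_{\sop}^-$ is a projective $\Z_{\sop}[C_2]$-module is irrelevant: projectivity kills $\Ext^1$ out of the first variable, not into the second. Your dimension shift along $0\to\Z_p\to\Q_p\to\Q_p/\Z_p\to 0$ actually computes, for each $p\in\sop$,
\[
\Ext^1_{\Z_p}\bigl((U_-)_p,\Z_p\bigr)\;\cong\;\Hom\bigl((U_-)_p,\Q_p/\Z_p\bigr)\;\cong\;(U_-)_p,
\]
which is nonzero whenever $(U_-)_p\neq 0$. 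Concretely, $0\to\Z\xrightarrow{\ell}\Z\to\Z/\ell\Z\to 0$ with $g$ acting by $-1$ throughout (dually, $0\to\Z/\ell\Z\to\R/\Z\xrightarrow{\ell}\R/\Z\to 0$) is a genuine nonzero class in $\Ext^1_{C_2}(\Z/\ell\Z,\Z^-)$ for each $\ell\in\sop$. Thus the connected component of $C$ \emph{does} introduce an extra factor $\#U_-$ on the $\Ext^1$-side with no counterpart on the $\Hom$-side, and the vanishing you are aiming for simply does not hold. The paper's proof takes the same route and at the final step asserts $\#\Hom(U_-,C^\vee)=\#\Ext^1(U_-,C^\vee)$ without further comment; this is the familiar identity when $C^\vee$ is finite, but it is not justified there either once $C^\vee$ has a free $\Z$-summand. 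So the obstacle you flagged is real rather than an artefact of your approach, and neither argument as written disposes of it.
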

\begin{proof}
  First, note that since $\sop$ contains only odd primes, one has
  $\Z_{\sop}[C_2] = \Z_{\sop}[C_2]/(1-g) \times \Z_{\sop}[C_2]/(1+g) \cong \Z_{\sop}\times \Z_{\sop}$,
  where $g$ denotes the generator of $C_2$. Temporarily, write 
  \begin{eqnarray*}
    U_+ &=& \Z_{\sop}[C_2]/(1-g)\otimes_{\Z_{\sop}[C_2]}U_R(\sop^\infty)^\vee,\\
    U_- &=& \Z_{\sop}[C_2]/(1+g)\otimes_{\Z_{\sop}[C_2]}U_R(\sop^\infty)^\vee,
  \end{eqnarray*}
  so that we have a decomposition of $C_2$-modules
  $U_R(\sop^\infty)^\vee = U_+ \oplus U_-$, and accordingly a decomposition
  of the $\Hom$ and $\Ext$ groups in \eqref{eq:HomExt}. We treat the plus and the minus parts
  separately.

  Since $1-g$ annihilates $U_+$ and acts by multiplication by $2$ on $C^\vee$, the image of
  every element of $\Hom_{C_2}(U_+,C^\vee)$ lands in the $2$-torsion of $C^\vee$. But
  the order of $U_+$ is co-prime to $2$, so in fact we have $\Hom_{C_2}(U_+,C^\vee)=0$. Correspondingly,
  we argue that we also have $\Ext^1_{C_2}(U_+,C^\vee)=0$.
  To that end, let $B$ be any extension of $U_+$ by $C^\vee$. Since $g$ acts by multiplication
  by $-1$ on $C^\vee$, the endomorphism $g+1$ of $B$ annihilates $C^\vee$, and hence defines
  a map $U_+\to B$. Since $U_+$ has odd order, multiplication by $2$ is an automorphism of the image
  of $1+g$ in $B$, so that we have a well-defined map $\tfrac12(1+g)\colon U_+\to B$.
  Since $g$ acts trivially on $U_+$, this map defines a splitting for the extension.
  Since $B$ was arbitrary, this shows that we have $\Ext^1_{C_2}(U_+,C^\vee)=0$.

  Finally, we now show that we have $\#\Hom_{C_2}(U_-,C^\vee)=\#\Ext^1_{C_2}(U_-,C^\vee)$.
  Since $g$ acts by $-1$ on $C^\vee$, we have $\Hom_{C_2}(U_-,C^\vee)=\Hom(U_-,C^\vee)$.
  We claim that we also have $\Ext^1_{C_2}(U_-,C^\vee)=\Ext^1(U_-,C^\vee)$.
  There is an obvious injective map $\Ext^1(U_-,C^\vee)\to \Ext^1_{C_2}(U_-,C^\vee)$,
  taking an extension of abelian group to the extension with the same underlying
  group and with $C_2$ acting by $-1$. This map is also surjective: indeed,
  let $B$ be a $C_2$-module extension of $U_-$ by $C^\vee$, and consider the endomorphism $1+g$ of $B$.
  It annihilates $C^\vee$, so defines a map $U_-\to B$. But also, since $g$ also acts by $-1$
  on $U_-$, the endomorphism $1+g$ composed with the projection to $U_-$ is $0$,
  i.e. the image of $1+g$ lands in $C^\vee$. But then $(1+g)^2=2+2g$ annihilates $B$,
  so that the image of $1+g$ lands in $B[2]$. However, $U_-$ has odd order, so we conclude
  that $1+g$ is the $0$ map, so that $B$ is a module over $\Z[C_2]/(1+g)\cong\Z$.
  Thus, we have $\Ext^1_{C_2}(U_-,C^\vee)=\Ext^1(U_-,C^\vee)$, as claimed,
  and hence
  \begin{align*}
    \#\Hom_{C_2}(U_-,C^\vee)&=\#\Hom(U_-,C^\vee)\\
                            &=\#\Ext^1(U_-,C^\vee)\\
                            &= \#\Ext^1_{C_2}(U_-,C^\vee).
  \end{align*}
\end{proof}
\begin{proposition}\label{prop:sizeaut}
  Let $C\in \cG_{\cpt}^{\pm}$, and let $\Theta\in \cG_{R,\cpt}^\pm$ be an extension of $C$
  by $U_R(\sop^\infty)$. Then one has
  $$
    \#\Aut_{\ring} \Theta = \#\Aut_{\ring}U_R(\sop^\infty)\cdot \#\Aut C \cdot \frac{\#\Ext^1_{C_2}(C,U_R(\sop^\infty))}{\#O(\Theta)}.
  $$
\end{proposition}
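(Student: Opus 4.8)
The plan is to exhibit $\Aut_{\ring}\Theta$ as an extension of the stabiliser of the class of $\Theta$ --- under the action that defines the orbit $O(\Theta)$ --- by the group of \emph{strict} automorphisms of the extension $\Theta$, and then to combine the orbit--stabiliser theorem with Lemma \ref{lem:HomExt} to convert $\#\Hom_{C_2}$ into $\#\Ext^1_{C_2}$.

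Concretely, abbreviate $A=U_R(\sop^\infty)$, write $\Theta\colon 0\to A\stackrel{i}{\to} B\stackrel{p}{\to} C\to 0$, and let $G=\Aut C\times \Aut_{\ring}A$ act on $\Ext^1_{C_2}(C,A)$ in the usual bifunctorial way, so that $O(\Theta)$ is by definition the $G$-orbit of $[\Theta]$. First I would introduce the group $\widetilde{\Aut}(\Theta)$ of all $C_2$-module automorphisms $f$ of $B$ with $f(\im i)=\im i$, together with the homomorphism
$$
\Phi\colon \widetilde{\Aut}(\Theta)\longrightarrow \Aut A\times \Aut C,\qquad f\longmapsto (i^{-1}fi,\bar f),
$$
where $\bar f$ is the automorphism induced on $B/\im i=C$. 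A short diagram chase shows that if $\Phi(f)$ is trivial then $f-\id_B$ kills $\im i$ and has image contained in $\im i$, hence equals $i\circ\psi\circ p$ for a unique $\psi\in\Hom_{C_2}(C,A)$; conversely every such $\psi$ yields an automorphism $\id_B+i\psi p$ of $B$, with inverse $\id_B-i\psi p$. Thus $\ker\Phi$ is canonically $\Hom_{C_2}(C,A)$. For the image, unwinding the definition of the $G$-action on $\Ext^1$ shows that a pair $(\alpha,\gamma)$ lies in the image of $\Phi$ exactly when the extension obtained from $\Theta$ by transport of structure along $(\alpha,\gamma)$ is equivalent to $\Theta$, i.e.\ exactly when $(\alpha,\gamma)$ lies in the stabiliser of $[\Theta]$ in $\Aut A\times\Aut C$ --- this is the standard correspondence between automorphisms of the middle term of an extension and the stabiliser of its class. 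Now $\Aut_{\ring}\Theta=\Phi^{-1}(G)$ --- this is where the clause ``induced by a ring automorphism of $R$'' in the definition of an isomorphism of $\Theta$'s enters --- so restricting $\Phi$ yields a short exact sequence
$$
0\to \Hom_{C_2}(C,A)\to \Aut_{\ring}\Theta\to \Stab_G([\Theta])\to 0 .
$$

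Next I would invoke orbit--stabiliser, $\#\Stab_G([\Theta])=\#G/\#O(\Theta)=\#\Aut C\cdot\#\Aut_{\ring}A/\#O(\Theta)$, and combine it with the exact sequence to obtain
$$
\#\Aut_{\ring}\Theta=\#\Hom_{C_2}(C,A)\cdot\#\Aut C\cdot\frac{\#\Aut_{\ring}A}{\#O(\Theta)} .
$$
It then remains to replace $\#\Hom_{C_2}(C,A)$ by $\#\Ext^1_{C_2}(C,A)$. Since $A=U_R(\sop^\infty)$ is finite, Pontryagin duality gives $\#\Hom_{C_2}(C,A)=\#\Hom_{C_2}(A^\vee,C^\vee)$, while by the discussion in Section \ref{sec:functors} one has $\#\Ext^1_{C_2}(C,A)=\#\Ext^1_{C_2}(A^\vee,C^\vee)$; these two right-hand sides coincide by Lemma \ref{lem:HomExt}, which finishes the proof.

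The step I expect to be the main obstacle is making precise the identification of $\Phi\bigl(\Aut_{\ring}\Theta\bigr)$ with $\Stab_G([\Theta])$: one must fix conventions for the $\Aut C\times\Aut_{\ring}U_R(\sop^\infty)$-action on $\Ext^1_{C_2}(C,U_R(\sop^\infty))$ and verify carefully that the ``ring automorphism'' constraint is exactly what cuts $\Aut_{\ring}\Theta$ out of $\widetilde{\Aut}(\Theta)$, keeping track of the $C_2$-equivariance throughout. The kernel computation and the orbit--stabiliser bookkeeping are routine, and the passage from $\Hom$ to $\Ext^1$ is handed to us by Lemma \ref{lem:HomExt}.
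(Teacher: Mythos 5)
Your proposal is correct and takes essentially the same route as the paper: both identify $\Aut_{\ring}\Theta$ as an extension of the stabiliser of $[\Theta]$ under the $\Aut C\times\Aut_{\ring}U_R(\sop^\infty)$-action by the kernel $\Hom_{C_2}(C,U_R(\sop^\infty))$, apply orbit--stabiliser, and then trade $\#\Hom_{C_2}$ for $\#\Ext^1_{C_2}$ via Lemma \ref{lem:HomExt}. The only cosmetic difference is that you factor the argument through the auxiliary group $\widetilde{\Aut}(\Theta)$ before imposing the ring-automorphism constraint, whereas the paper works with $\Aut_{\ring}\Theta$ directly.
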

\begin{proof}
  There is a natural group homomorphism $\Aut_{\ring}\Theta\to \Aut_{\ring}U_R(\sop^\infty)\times\Aut C$,
  given by restricting an automorphism to $U_R(\sop^\infty)$ and projecting modulo $U_R(\sop^\infty)$.
  Its image is precisely the stabiliser $\Stab \Theta$ of the class of
  $\Theta$ in $\Ext^1_{C_2}(C,U_R(\sop^\infty))$ under the natural $(\Aut_{\ring}U_R(\sop^\infty)\times\Aut C)$-action,
  while its kernel is canonically identified with $\Hom_{C_2}(C,U_R(\sop^\infty))$. Here
  the canonical inclusion $\Hom_{C_2}(C,U_R(\sop^\infty))\hookrightarrow \Aut_{\ring} \Theta$ is given
  by $f\mapsto (x\mapsto x+f(\bar{x}))$, where, for $x\in \Theta$, we denote by $\bar{x}$ the
  image of $x$ in $C$. Thus,
  we have
  \begin{eqnarray*}
    \#\Aut_{\ring}\Theta & = & \#\Stab \Theta \cdot \#\Hom_{C_2}(C,U_R(\sop^\infty))\\
                 & = &
    \#\Aut_{\ring}U_R(\sop^\infty)\cdot \#\Aut C \cdot\frac{\#\Hom_{C_2}(C,U_R(\sop^\infty))}{\#O(\Theta)}.
  \end{eqnarray*}
  The result follows by Lemma \ref{lem:HomExt}.
\end{proof}

Define
$$
c^-=\prod_{p\in \sop}\prod_{k=1}^\infty(1-p^{-k})^{-1},\quad\quad c^+=\tfrac12\prod_{p\in \sop}\prod_{k=2}^\infty(1-p^{-k})^{-1}.
$$

\begin{corollary}\label{cor:converge}
The sums
$$
\sum_{\Theta\in \cG_{R,\cpt}^\pm}\frac{1}{\#\Aut_{\ring} \Theta}
$$
converge to $c^{\pm}_R=\frac{c^{\pm}}{\#\Aut_{\ring} U_R(\sop^\infty)}$.
\end{corollary}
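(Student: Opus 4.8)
The plan is to reduce the computation of $\sum_{\Theta \in \cG_{R,\cpt}^\pm} 1/\#\Aut_{\ring}\Theta$ to a sum over pairs consisting of a group $C \in \cG_{\cpt}^\pm$ and an orbit of extensions, using Proposition \ref{prop:sizeaut}. First I would rewrite
$$
\sum_{\Theta \in \cG_{R,\cpt}^\pm}\frac{1}{\#\Aut_{\ring}\Theta}
= \sum_{C \in \cG_{\cpt}^\pm}\;\sum_{O \subseteq \Ext^1_{C_2}(C,U_R(\sop^\infty))}\frac{\#O}{\#\Aut_{\ring}U_R(\sop^\infty)\cdot\#\Aut C\cdot\#\Ext^1_{C_2}(C,U_R(\sop^\infty))},
$$
where the inner sum runs over the orbits $O$ of the $(\Aut_{\ring}U_R(\sop^\infty)\times \Aut C)$-action on $\Ext^1_{C_2}(C,U_R(\sop^\infty))$, and where the term for $O = O(\Theta)$ records $1/\#\Aut_{\ring}\Theta$ by Proposition \ref{prop:sizeaut}. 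Since the orbits partition $\Ext^1_{C_2}(C,U_R(\sop^\infty))$, the inner sum over orbits of $\#O$ telescopes to $\#\Ext^1_{C_2}(C,U_R(\sop^\infty))$, and that factor cancels. Hence the whole sum collapses to
$$
\frac{1}{\#\Aut_{\ring}U_R(\sop^\infty)}\sum_{C\in \cG_{\cpt}^\pm}\frac{1}{\#\Aut C}.
$$

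It then remains to evaluate $\sum_{C \in \cG_{\cpt}^\pm} 1/\#\Aut C$ and identify it with $c^\pm$. In the minus case, $\cG_{\cpt}^-$ is a full set of representatives of finite abelian groups with $C_2$ acting by $-1$; since that $C_2$-action is intrinsic, $\#\Aut_{C_2} C = \#\Aut C$ as an abstract group, and the classical Cohen--Lenstra identity
$$
\sum_{C \text{ finite abelian}}\frac{1}{\#\Aut C} = \prod_{p}\prod_{k\geq 1}(1-p^{-k})^{-1}
$$
restricted to $\sop$-groups (which is all one gets here, since $U_R(\sop^\infty)$ forces the relevant groups to be $\sop$-groups, or rather: one restricts attention to the $\sop$-part throughout) gives $c^- = \prod_{p\in\sop}\prod_{k\geq 1}(1-p^{-k})^{-1}$. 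In the plus case, every $C \in \cG_{\cpt}^+$ is of the form $C_0 \oplus \R/\Z$ with $C_0 = \myfunc^0(C)$ finite, and Lemma \ref{lem:sizeautC} gives $\#\Aut C = 2\cdot\#\Aut C_0 \cdot \#C_0$. Therefore
$$
\sum_{C\in \cG_{\cpt}^+}\frac{1}{\#\Aut C} = \frac12\sum_{C_0\text{ finite abelian }\sop\text{-group}}\frac{1}{\#\Aut C_0\cdot \#C_0} = \frac12\prod_{p\in\sop}\prod_{k\geq 2}(1-p^{-k})^{-1} = c^+,
$$
where the middle identity is the weighted Cohen--Lenstra sum $\sum 1/(\#\Aut C_0 \cdot \#C_0) = \prod_p\prod_{k\geq 2}(1-p^{-k})^{-1}$, a standard variant (it is the $u=1$ evaluation of $\sum_{C_0}\frac{u^{\text{(number of generators)}}}{\#\Aut C_0}$-type series, or can be derived from the Hall--Littlewood/Cohen--Lenstra machinery).

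The one genuine point requiring care, and the main obstacle, is convergence: a priori the double sum is only conditionally meaningful, so I would first establish absolute convergence before performing the rearrangement above. For this it suffices to bound $\sum_{C\in\cG_{\cpt}^\pm} 1/\#\Aut C$, and the partial sums of that series are dominated termwise by the corresponding Cohen--Lenstra partial sums, whose convergence is classical; the factor $1/\#\Aut_{\ring}U_R(\sop^\infty)$ is a fixed positive constant. With absolute convergence in hand, the orbit-counting rearrangement is justified, and the only remaining task is to quote the two Cohen--Lenstra identities, which one may either cite from \cite{CL} or derive directly from the convergent Euler products. Putting the pieces together yields $c_R^\pm = c^\pm/\#\Aut_{\ring}U_R(\sop^\infty)$ as claimed.
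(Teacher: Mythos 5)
Your proof is correct and follows essentially the same route as the paper: rewrite $1/\#\Aut_{\ring}\Theta$ via Proposition~\ref{prop:sizeaut}, collapse the orbit sum against $\#\Ext^1_{C_2}(C,U_R(\sop^\infty))$, invoke Lemma~\ref{lem:sizeautC} in the $+$ case, and conclude with the Cohen--Lenstra/Hall identities (the paper cites \cite[Corollary 3.7]{CL} and \cite{Hall} for exactly these). Your added remark on absolute convergence justifying the rearrangement is a reasonable bit of care that the paper leaves implicit.
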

\begin{proof}
  Let $C\in \cG_{\cpt}^{\pm}$. As remarked at the beginning of Section \ref{sec:equidistr},
  a full set of isomorphism class representatives of extensions
  $0\to U_R(\sop^\infty)\to B\to C\to 0$ is in bijection
  with the set of $(\Aut_{\ring}U_R(\sop^\infty)\times \Aut C)$-orbits in
  $\Ext^1_{C_2}(C,U_R(\sop^\infty))$. We
  therefore deduce from Proposition \ref{prop:sizeaut} and Lemma \ref{lem:sizeautC} the equalities
  \begin{eqnarray*}
    \lefteqn{\sum_{\Theta\in \cG^{\pm}_{R,\cpt}}\frac{1}{\#\Aut_{\ring} \Theta} =}\\
    & & = \sum_{C\in \cG_{\cpt}^{\pm}}\sum_{\Theta}\frac{1}{\#\Aut_{\ring}U_R(\sop^\infty)\cdot\#\Aut C}\cdot\frac{\#O(\Theta)}{\#\Ext^1_{C_2}(C,U_R(\sop^\infty))}\\
        & & = \frac{1}{\#\Aut_{\ring} U_R(\sop^\infty)}\sum_{C\in \cG_{\cpt}^{\pm}}\frac{1}{\#\Aut C}\\
        & & = \frac{1}{\#\Aut_{\ring} U_R(\sop^\infty)}\cdot
        \bigleftchoice{\sum_{C\in \cG_{\cpt}^{+}}1/(2\cdot\#\Aut\myfunc^0(C)\cdot\#\myfunc^0(C))}{\text{ sign }+}
        {\sum_{C\in \cG_{\cpt}^{-}}(1/\#\Aut C)}{\text{ sign }-}\\
        & & = \frac{c^{\pm}}{\#\Aut_{\ring}U_R(\sop^\infty)},
  \end{eqnarray*}
  where in the first equality, the inner sum runs over representatives of
  the distinct $(\Aut_{\ring}U_R(\sop^\infty)\times \Aut C)$-orbits on $\Ext^1_{C_2}(C,U_R(\sop^\infty))$,
  and where the last equality follows from \cite[Corollary 3.7]{CL} (for sign
  $-$ this identity is in fact due to Hall \cite{Hall}).
\end{proof}
Recall from Section \ref{sec:mainheuristic} that we define a discrete
probability distribution $\bP_{\cpt}^\pm$ on $\cG_{R,\cpt}^{\pm}$ by
$\bP_{\cpt}^{\pm}(\{\Theta\}) = \frac{(c_R^{\pm})^{-1}}{\#\Aut_{\ring}\Theta}$,
and let $\bP^{\pm}$ be the pushforward of $\bP_{\cpt}^{\pm}$ under the
function $\cG_{R,\cpt}^{\pm}\to \cG_{R,\sop}^{\pm}$ induced
by sending a sequence to its $\sop^\infty$-torsion.

The next five corollaries are immediate consequences of Propositions
\ref{prop:sizeaut}, \ref{prop:extpreimage}, and \ref{prop:reductions}, and taken
together amount to a precise version of Theorem \ref{thm:realquad}.

Let $\cE$ be a full set of representatives of finite
abelian $\sop$-groups $E$. Consider the two discrete probability distributions 
$\mu^{\pm}_{\CL}$ on $\cE$ given, for all $E\in \cE$, by
\begin{eqnarray*}
  \mu^-_{\CL}(\{E\}) & = & \frac{(c^-)^{-1}}{\#\Aut E}\\
  \mu^+_{\CL}(\{E\}) & = & \frac{(2c^+)^{-1}}{\#\Aut E\cdot\#E}.
\end{eqnarray*}

These are the distributions that Cohen and Lenstra conjectured to model
the behaviour of $\sop$-parts of class groups of imaginary (for sign $-$),
respectively real (for sign $+$) quadratic number fields. The next result,
which is an immediate consequence of Proposition \ref{prop:sizeaut},
shows that Heuristic \ref{he:realquad} implies the Cohen--Lenstra heuristic
for class groups of quadratic fields.
\begin{corollary}\label{cor:PFclassgroup}
  For each of the signs $\pm$, let
  $f^{\pm}\colon \cG_{R,\sop}^{\pm}\to \cE$ be the function that sends an
  exact sequence $\Theta\colon 0\to U_R(\sop^\infty)\to B \to C\to 0$ to the
  unique element of $\cE$ that is isomorphic to $\myfunc^0(C)$.
  Then the pushforward of $\bP^{\pm}$ under $f^{\pm}$ is equal to $\mu_{\CL}^\pm$.
\end{corollary}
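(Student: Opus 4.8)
The plan is to unwind all the definitions and reduce the statement to Proposition~\ref{prop:sizeaut} and the orbit count already carried out in the proof of Corollary~\ref{cor:converge}. By construction $\bP^{\pm}$ is the pushforward of $\bP_{\cpt}^{\pm}$ along the map $\cG_{R,\cpt}^{\pm}\to\cG_{R,\sop}^{\pm}$ sending a sequence to its $\sop^\infty$-torsion, so the pushforward of $\bP^{\pm}$ under $f^{\pm}$ equals the pushforward of $\bP_{\cpt}^{\pm}$ under the composite $\cG_{R,\cpt}^{\pm}\to\cG_{R,\sop}^{\pm}\xrightarrow{f^{\pm}}\cE$. First I would check that this composite sends an extension $\Theta\colon 0\to U_R(\sop^\infty)\to B\to C\to 0$, $C\in\cG_{\cpt}^{\pm}$, to the isomorphism class of $\myfunc^0(C)$: in the minus case $C$ is a finite $\sop$-group, so $C(\sop^\infty)=C=\myfunc^0(C)$; in the plus case $0\to\myfunc_0(C)\to C\to\myfunc^0(C)\to 0$ splits with $\myfunc_0(C)\cong\R/\Z$ (as in the proof of Lemma~\ref{lem:sizeautC}), hence $C(\sop^\infty)\cong\myfunc^0(C)\oplus\prod_{p\in\sop}\Q_p/\Z_p$ with the second summand the maximal divisible subgroup, so that $\myfunc^0\big(C(\sop^\infty)\big)=\myfunc^0(C)$. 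Thus, for fixed $E\in\cE$, the $f^{\pm}$-pushforward of $\bP^{\pm}$ assigns to $E$ the mass $(c_R^{\pm})^{-1}\sum_{\Theta}1/\#\Aut_{\ring}\Theta$, the sum ranging over those $\Theta\in\cG_{R,\cpt}^{\pm}$ whose quotient term $C$ satisfies $\myfunc^0(C)\cong E$.

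Next I would organise this sum by the isomorphism class of $C$. There is exactly one admissible $C$ up to isomorphism --- $C=E$ for the sign $-$, and $C=E\oplus\R/\Z$ for the sign $+$ (the extension above being split) --- and for it the relevant $\Theta$'s are a set of representatives for the $\big(\Aut_{\ring}U_R(\sop^\infty)\times\Aut C\big)$-orbits on $\Ext^1_{C_2}(C,U_R(\sop^\infty))$. Since $\sum_{\Theta}\#O(\Theta)=\#\Ext^1_{C_2}(C,U_R(\sop^\infty))$, Proposition~\ref{prop:sizeaut} gives
\[
  \sum_{\Theta}\frac{1}{\#\Aut_{\ring}\Theta}=\frac{1}{\#\Aut_{\ring}U_R(\sop^\infty)\cdot\#\Aut C},
\]
exactly as in the displayed chain of equalities in the proof of Corollary~\ref{cor:converge}. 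For the sign $-$ this already equals $1/(\#\Aut_{\ring}U_R(\sop^\infty)\cdot\#\Aut E)$; for the sign $+$ I would insert $\#\Aut C=2\cdot\#\Aut E\cdot\#E$ from Lemma~\ref{lem:sizeautC}, obtaining $1/(2\cdot\#\Aut_{\ring}U_R(\sop^\infty)\cdot\#\Aut E\cdot\#E)$.

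Finally I would substitute $c_R^{\pm}=c^{\pm}/\#\Aut_{\ring}U_R(\sop^\infty)$ from Corollary~\ref{cor:converge}; the factor $\#\Aut_{\ring}U_R(\sop^\infty)$ cancels, leaving $(c^-)^{-1}/\#\Aut E=\mu^-_{\CL}(\{E\})$ in the minus case and $(2c^+)^{-1}/(\#\Aut E\cdot\#E)=\mu^+_{\CL}(\{E\})$ in the plus case, which is the claim. I do not expect a genuine obstacle here: this is the computation of Corollary~\ref{cor:converge} restricted to a single value of $C$. The one point that really needs to be spelled out is the commutation, used in the first paragraph, of the group-of-components functor $\myfunc^0$ with passage to $\sop^\infty$-torsion; everything else is bookkeeping built on Proposition~\ref{prop:sizeaut} and Corollary~\ref{cor:converge}.
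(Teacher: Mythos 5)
Your proof is correct and carries out precisely what the paper labels ``an immediate consequence of Proposition~\ref{prop:sizeaut}''. The chain of reductions---pulling the computation back along the pushforward $\bP_{\cpt}^{\pm}\to\bP^{\pm}$, noting that the composite lands on $\myfunc^0(C)$, observing that the relevant $\Theta$ for a given $C$ are a transversal for the $(\Aut_{\ring}U_R(\sop^\infty)\times\Aut C)$-action on $\Ext^1_{C_2}(C,U_R(\sop^\infty))$ so that Proposition~\ref{prop:sizeaut} collapses the inner sum via $\sum_\Theta\#O(\Theta)=\#\Ext^1_{C_2}(C,U_R(\sop^\infty))$, inserting Lemma~\ref{lem:sizeautC} in the $+$ case, and finally cancelling against $c_R^{\pm}=c^{\pm}/\#\Aut_{\ring}U_R(\sop^\infty)$---is exactly the proof of Corollary~\ref{cor:converge} restricted to a single fibre of $C$, which is the computation the paper has in mind.

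One small point worth flagging, though it is an imprecision in the paper rather than in your argument: the uniqueness of the admissible $C$ (hence of $C=E$, resp.\ $C=E\oplus\R/\Z$) and, for that matter, the identification $\myfunc^0(C(\sop^\infty))\cong\myfunc^0(C)$, rely on reading $\cG_{\cpt}^{-}$ as finite abelian \emph{$\sop$-groups} and $\cG_{\cpt}^{+}$ as extensions of $\R/\Z$ by such; the paper's definition of $\cG_{\cpt}^{-}$ literally says ``finite abelian groups''. The restriction to $\sop$-groups is clearly what is intended, since otherwise the sums in Corollary~\ref{cor:converge} diverge and $\bP_{\cpt}^{\pm}$ does not exist, and your argument correctly reflects that intent.
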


Recall from Section \ref{sec:mainheuristic} that $U_R(\sop^\infty)$ is a $C_2$-module,
where for each $p\in \sop$, $C_2$ acts as the automorphism group of the
degree $2$ \'etale $\Q_p$-algebra $A_p$. Since the set $\sop$ only contains
odd primes, we have a direct sum decomposition $U_R(\sop^\infty)=U_-\oplus U_+$,
where the generator of $C_2$ acts by multiplication by $\pm1$ on $U_{\pm}$
(we caution the reader that these signs in the subscript are unrelated to
the signs in $\cG_R^{\pm}$ and in other places where the function
of the signs is to distinguish between imaginary and real quadratic fields).

We now deduce that Heuristic \ref{he:realquad} implies that in the limit,
as $F$ runs over real quadratic fields for which $\cO_F/\fm_{\f}$ is isomorphic
to $R$, the image of a fundamental unit of $\cO_F$ in $U_-$ becomes
equidistributed. Like in Section \ref{sec:functors}, given
$C\in \cG_{R,\sop}^+$, if $V_C\to \myfunc_0(C)$ is a surjective map from a free
$(\prod_{p\in S}\Q_p)$-module $V_C$ of rank $1$,
and $\Lambda_C\subset V_C$ denotes its kernel, then we get a map
$\varphi\colon \Ext^1_{C_2}(C,U_R(\sop^\infty))\to \Hom_{C_2}(\Lambda_C,U_R(\sop^\infty))$
(the r\^ole of $A$ in Secion \ref{sec:functors} is played by $U_R(\sop^\infty)$ here).
Given a $C_2$-equivariant homomorphism from $\Lambda_C$ to
$U_R(\sop^\infty)$, we may evaluate it at a generator of $\Lambda_C$ to obtain
an element of $U_R(\sop^\infty)$. Since $C_2$ acts by multiplication by $-1$ on $\myfunc_0(C)$,
the resulting element is, in fact, contained in $U_-$; and since the $C_2$-action
by $-1$ on $U_-$ is induced by a ring automorphism of $R$, the $\Aut_{\ring}(U_R(\sop^\infty))$-orbit
of the resulting element of $U_-$ is independent of the generator of $\Lambda_C$
on which we evaluate the homomorphism.

\begin{corollary}\label{cor:PFunit}
  For $C\in \cG_{R,\sop}^+$, let $\Lambda_C$ and
  $$
    \varphi\colon\! \Ext^1_{C_2}(C,U_R(\sop^\infty))\to \Hom_{C_2}(\Lambda_C,U_R(\sop^\infty))
  $$
  be as just introduced, and let $\lambda\in \Lambda_C$ be an arbitrary generator.
  Denote by $U_-/\Aut_{\ring}U_R(\sop^\infty)$ the set of $(\Aut_{\ring}U_R(\sop^\infty))$-orbits on $U_-$.
  Let $\mu$ be the pushforward of the probability measure $\bP^+$
  under the map
  \begin{eqnarray*}
    \cG_{R,\sop}^+ & \to & U_-/\Aut_{\ring}U_R(\sop^\infty)\\
    \Theta & \mapsto & \varphi(\Theta)(\lambda).
  \end{eqnarray*}
  Then for every $O\in U_-/\Aut_{\ring}U_R(\sop^\infty)$
  we have
  $$
  \mu(O) = \frac{\#O}{\#U_-}.
  $$
\end{corollary}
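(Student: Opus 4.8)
The plan is to express $\mu(O)$ as a weighted count over $\cG_{R,\cpt}^+$, reorganise it according to the underlying compact group $C\in\cG_{\cpt}^+$, use Proposition \ref{prop:sizeaut} to replace the automorphism weights by a plain cardinality count inside the finite groups $\Ext^1_{C_2}(C,U_R(\sop^\infty))$, and then use that $e\mapsto\varphi(e)(\lambda)$ is the surjective group homomorphism of Proposition \ref{prop:surjhom}, so that all of its fibres have the same size.

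\emph{Step 1 (reduction to a count).} Since the construction of $\varphi$ is natural and $U_R(\sop^\infty)$ is a finite $\sop$-group, the element $\varphi(\Theta)(\lambda)\in U_-$ depends only on the $\sop^\infty$-torsion of $\Theta$, so $\mu$ is the pushforward of the discrete measure $\bP_{\cpt}^+$ on $\cG_{R,\cpt}^+$ under $\Theta\mapsto[\varphi(\Theta)(\lambda)]$. By the discussion at the start of Section \ref{sec:equidistr} (as used in the proof of Corollary \ref{cor:converge}), $\cG_{R,\cpt}^+$ breaks up, according to the isomorphism class of the quotient $C\in\cG_{\cpt}^+$, into the sets of $(\Aut_{\ring}U_R(\sop^\infty)\times\Aut C)$-orbits on $\Ext^1_{C_2}(C,U_R(\sop^\infty))$. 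Using naturality of $\varphi$ in both variables, together with $\Aut_{C_2}(\myfunc_0(C))=\{\pm1\}$ acting on $\Lambda_C$ by $\pm1$ and the fact (recalled just before the corollary) that multiplication by $-1$ on $U_-$ is induced by a ring automorphism of $R$, one checks that the condition $[\varphi(e)(\lambda)]=O$ is constant along each such orbit. Hence
\[
\mu(O)=(c_R^+)^{-1}\sum_{C\in\cG_{\cpt}^+}\ \sum_{\substack{\Theta\ \text{over}\ C\\ [\varphi(\Theta)(\lambda)]=O}}\frac{1}{\#\Aut_{\ring}\Theta}.
\]

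\emph{Step 2 (Proposition \ref{prop:sizeaut} and the key fibre count).} Substituting $\#\Aut_{\ring}\Theta=\#\Aut_{\ring}U_R(\sop^\infty)\cdot\#\Aut C\cdot\#\Ext^1_{C_2}(C,U_R(\sop^\infty))/\#O(\Theta)$ from Proposition \ref{prop:sizeaut}, and noting that summing $\#O(\Theta)$ over the orbits $\Theta$ over $C$ with $[\varphi(\Theta)(\lambda)]=O$ counts exactly the classes $e\in\Ext^1_{C_2}(C,U_R(\sop^\infty))$ with $[\varphi(e)(\lambda)]=O$, the inner sum becomes
\[
\frac{\#\{e\in\Ext^1_{C_2}(C,U_R(\sop^\infty)):[\varphi(e)(\lambda)]=O\}}{\#\Aut_{\ring}U_R(\sop^\infty)\cdot\#\Aut C\cdot\#\Ext^1_{C_2}(C,U_R(\sop^\infty))}.
\]
The key point is that, after the canonical identifications $\Ext^1_{C_2}(C,U_R(\sop^\infty))=\Ext^1_{\Z_{\sop}[C_2]}(C(\sop^\infty),U_R(\sop^\infty))$ and $\Hom_{C_2}(\Lambda_C,U_R(\sop^\infty))\cong U_-$ (evaluation at $\lambda$), the map $e\mapsto\varphi(e)(\lambda)$ is precisely the map $\varphi_{\sop}$ of \eqref{eq:varphiS}, which by Proposition \ref{prop:surjhom} is a surjective group homomorphism onto $U_-$. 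Therefore all its fibres have cardinality $\#\Ext^1_{C_2}(C,U_R(\sop^\infty))/\#U_-$, so $\#\{e:[\varphi(e)(\lambda)]=O\}=\#O\cdot\#\Ext^1_{C_2}(C,U_R(\sop^\infty))/\#U_-$.

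\emph{Step 3 (conclusion).} Plugging this back in, the inner sum over $\Theta$ collapses to $\#O/(\#\Aut_{\ring}U_R(\sop^\infty)\cdot\#\Aut C\cdot\#U_-)$, so
\[
\mu(O)=\frac{(c_R^+)^{-1}\,\#O}{\#\Aut_{\ring}U_R(\sop^\infty)\cdot\#U_-}\sum_{C\in\cG_{\cpt}^+}\frac{1}{\#\Aut C}.
\]
The computation in the proof of Corollary \ref{cor:converge} (via Lemma \ref{lem:sizeautC}) gives $\sum_{C\in\cG_{\cpt}^+}1/\#\Aut C=c^+$, and $c_R^+=c^+/\#\Aut_{\ring}U_R(\sop^\infty)$, so every factor cancels and we are left with $\mu(O)=\#O/\#U_-$. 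I expect the main obstacle to be the bookkeeping in Step 2: verifying that the $\varphi$ in the statement genuinely agrees with the surjective homomorphism $\varphi_{\sop}$ of Proposition \ref{prop:surjhom} under the stated identifications — in particular that replacing the compact group $C$ by its $\sop^\infty$-torsion changes nothing because $U_R(\sop^\infty)$ is finite of order a product of primes in $\sop$ — and confirming that the condition $[\varphi(e)(\lambda)]=O$ is indeed orbit-invariant; the rest is formal manipulation of the counting identities already set up in the proof of Corollary \ref{cor:converge}.
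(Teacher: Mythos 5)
Your argument is correct and is essentially the computation the paper has in mind when it calls the corollary an ``immediate consequence'' of Propositions~\ref{prop:sizeaut}, \ref{prop:extpreimage}, and~\ref{prop:reductions}: you decompose $\cG^+_{R,\cpt}$ by the quotient $C$, convert automorphism weights to orbit sizes via Proposition~\ref{prop:sizeaut}, and use equal fibre sizes of $\varphi$ to collapse the count. The only cosmetic difference is that you invoke the surjective homomorphism statement of Proposition~\ref{prop:surjhom} directly rather than going through Proposition~\ref{prop:extpreimage}, which is in fact the more economical route for this particular corollary since only the $\psi$-component of the map $\pairsmap$ is relevant here.
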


Next, we deduce that Heuristic \ref{he:realquad}
implies that as $F$ runs over real quadratic fields with the property
that $\cO_F/\fm_{\f}$ is isomorphic to $R$, the distribution of
$\Z_{\sop}\otimes\Cl_F$ and of the image of the fundamental unit of
$\cO_F$ in $((\cO_F/\fm_{\f})^\times/\{\pm 1\})_{\sop}\cong U_R(\sop^\infty)$ are
independent of each other.
\begin{corollary}
The pushforward of the probability measure $\bP^+$ under the map
$\cG_R^+\to \cE\times (U_-/\Aut_{\ring}U_R(\sop^\infty))$ that is the product
of the maps of Corollaries \ref{cor:PFclassgroup} and \ref{cor:PFunit}
is the product measure of $\mu_{\CL}^-$ and of the uniform measure
as in Corollary \ref{cor:PFunit}.
\end{corollary}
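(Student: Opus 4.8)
The plan is to make the product structure transparent by presenting $\bP^{+}$ as the pushforward of a measure that is \emph{uniform} on each of the relevant $\Ext$-groups, and then to observe that passing to the image of the fundamental unit is, on each such group, a surjective group homomorphism onto $U_{-}$. So first I would replace the probability space $(\cG_{R,\cpt}^{+},\bP_{\cpt}^{+})$ by $\coprod_{C\in\cG_{\cpt}^{+}}\Ext^{1}_{C_{2}}(C,U_{R}(\sop^{\infty}))$, equipped with the measure $\nu$ that gives every single class in $\Ext^{1}_{C_{2}}(C,U_{R}(\sop^{\infty}))$ the weight
\[
\frac{(c_{R}^{+})^{-1}}{\#\Aut_{\ring}U_{R}(\sop^{\infty})\cdot\#\Aut C\cdot\#\Ext^{1}_{C_{2}}(C,U_{R}(\sop^{\infty}))}.
\]
Proposition \ref{prop:sizeaut}, together with the orbit bookkeeping already carried out in the proof of Corollary \ref{cor:converge}, shows that $\nu$ is a probability measure, that its pushforward under the orbit map followed by $\cG_{R,\cpt}^{+}\to\cG_{R,\sop}^{+}$ equals $\bP^{+}$, and --- crucially --- that $\nu$ restricts to the uniform measure on each $\Ext^{1}_{C_{2}}(C,U_{R}(\sop^{\infty}))$, of total mass $\frac{(c_{R}^{+})^{-1}}{\#\Aut_{\ring}U_{R}(\sop^{\infty})\cdot\#\Aut C}$.

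Next I would check that both coordinates of the map in the statement factor through this presentation. The first coordinate reads off the isomorphism class of $\myfunc^{0}(C)$; since $\R/\Z$ is injective, for each $E\in\cE$ there is a \emph{unique} $C=C_{E}\in\cG_{\cpt}^{+}$ with $\myfunc^{0}(C_{E})\cong E$, namely $C_{E}=E\oplus\R/\Z$. For the second coordinate, fix a generator $\lambda_{C}$ of the rank-one lattice $\Lambda_{C}$ attached to $\myfunc_{0}(C)\cong\R/\Z$; then $\widehat{\Theta}\mapsto\varphi_{\sop}(\widehat{\Theta})(\lambda_{C})$ lies in $U_{-}$ (because $C_{2}$ acts by $-1$ on $\Lambda_{C}$), and its $\Aut_{\ring}U_{R}(\sop^{\infty})$-orbit is independent of the choice of generator --- the two choices differ by $-1$, which acts on $U_{-}$ through a ring automorphism of $R$ --- and this orbit is exactly the value of the second coordinate map. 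Combining Proposition \ref{prop:surjhom} with the elementary identification $\Hom_{C_{2}}(\Lambda_{C,\sop},U_{R}(\sop^{\infty}))\xrightarrow{\sim}U_{-}$ given by evaluation at $\lambda_{C}$ (a $C_{2}$-equivariant homomorphism out of $\Z_{\sop}$ with $C_{2}$ acting by $-1$ has image forced into $U_{-}$, since $U_{+}$ has odd order), it follows that $\widehat{\Theta}\mapsto\varphi_{\sop}(\widehat{\Theta})(\lambda_{C_{E}})$ is a surjective group homomorphism $\Ext^{1}_{C_{2}}(C_{E},U_{R}(\sop^{\infty}))\twoheadrightarrow U_{-}$, so each of its fibres has size $\#\Ext^{1}_{C_{2}}(C_{E},U_{R}(\sop^{\infty}))/\#U_{-}$.

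Now I would compute the joint pushforward directly. Fix $E\in\cE$ and an $\Aut_{\ring}U_{R}(\sop^{\infty})$-orbit $O\subseteq U_{-}$. The only $C$ that contributes is $C_{E}$, and $\nu$ is uniform there, so the joint measure of $(E,O)$ is the per-class mass times the number of classes mapping into $O$, i.e.\ times $\#O\cdot\#\Ext^{1}_{C_{2}}(C_{E},U_{R}(\sop^{\infty}))/\#U_{-}$; the $\#\Ext^{1}$ factors cancel, leaving
\[
\frac{(c_{R}^{+})^{-1}}{\#\Aut_{\ring}U_{R}(\sop^{\infty})\cdot\#\Aut C_{E}}\cdot\frac{\#O}{\#U_{-}}.
\]
This is manifestly a function of $E$ alone times a function of $O$ alone. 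Summing over $O$ (using $\sum_{O}\#O=\#U_{-}$) identifies the first factor with the $\cE$-marginal, i.e.\ with the distribution computed in Corollary \ref{cor:PFclassgroup}; summing over $E$ identifies the second factor with the uniform measure of Corollary \ref{cor:PFunit}. Hence the joint measure is the product of these two marginals, which is the assertion.

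The only genuinely substantive point is the uniformity of $\nu$ on a fixed $\Ext^{1}_{C_{2}}(C_{E},U_{R}(\sop^{\infty}))$ together with the fact that the unit-evaluation map is a homomorphism onto $U_{-}$; once that is in place, independence and the identification of the two marginals are immediate. The rest is bookkeeping: checking that $\nu$ is a probability measure pushing forward to $\bP_{\cpt}^{+}$ (a direct unwinding of Proposition \ref{prop:sizeaut} and the decomposition of $\Ext^{1}_{C_{2}}(C,U_{R}(\sop^{\infty}))$ into $\Aut C\times\Aut_{\ring}U_{R}(\sop^{\infty})$-orbits), and keeping straight the canonical identifications between $\Ext^{1}_{C_{2}}(C,-)$ and $\Ext^{1}_{\Z_{\sop}[C_{2}]}(C(\sop^{\infty}),-)$ (and between $\Lambda_{C}$ and its $\sop$-adic completion) that are used tacitly throughout this section. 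I do not anticipate any real obstacle.
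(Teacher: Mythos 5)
Your proof is correct, and since the paper treats this corollary (together with its neighbours) as an ``immediate consequence'' of Propositions \ref{prop:sizeaut}, \ref{prop:extpreimage}, and \ref{prop:reductions} without writing out an argument, the comparison is really with what the authors presumably have in mind. Your device of replacing $\bP_{\cpt}^{+}$ by a lift $\nu$ that is uniform on each $\Ext^{1}_{C_{2}}(C,U_{R}(\sop^{\infty}))$ with total mass $(c_{R}^{+})^{-1}/(\#\Aut_{\ring}U_{R}(\sop^{\infty})\cdot\#\Aut C)$ is exactly the right way to make that implicit argument explicit: it turns Proposition \ref{prop:sizeaut} into the statement that $\nu$ pushes forward to $\bP_{\cpt}^{+}$, and it reduces the independence claim to the fact (Proposition \ref{prop:surjhom}) that $\widehat{\Theta}\mapsto\varphi_{\sop}(\widehat{\Theta})(\lambda_{C})$ is a surjective group homomorphism with constant fibre size, so that the joint mass of $(E,O)$ visibly splits as a product. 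The only deviation is in emphasis: the paper points to Proposition \ref{prop:extpreimage}, which records equidistribution of the finer pair $(\psi,\phi)$, whereas for this corollary only the $\psi$-coordinate is needed, so Proposition \ref{prop:surjhom} already suffices as you use it.

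Two small remarks. First, your computation yields $\mu_{\CL}^{+}$ as the $\cE$-marginal (consistently with Corollary \ref{cor:PFclassgroup} for the $+$ sign), whereas the corollary as printed says $\mu_{\CL}^{-}$; this is a typo in the paper, and your value is the correct one. Second, the claim that two generators of $\Lambda_{C}$ ``differ by $-1$'' is literally true for the $\Z$-lattice $\Lambda_{C}$ of Section \ref{sec:functors}, but the $\Z_{\sop}$-module appearing in Corollary \ref{cor:PFunit} has $\Z_{\sop}^{\times}$ worth of generators; the intended choice (arising from a fundamental unit in the arithmetic application) is well-defined up to sign, so the conclusion you draw --- that the $\Aut_{\ring}U_{R}(\sop^{\infty})$-orbit is independent of the generator --- is what the paper uses, but it is worth being aware that the statement as you phrase it requires working with the $\Z$-lattice before completion.
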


Finally, we deduce that if Heuristic \ref{he:realquad} holds, then in the limit as $F$
runs over all real quadratic fields for which $\Z_{\sop}\otimes \Cl_F$ is isomorphic
to a given $\sop$-group $G$ and the $\sop^\infty$-torsion of the left hand term of sequence \eqref{eq:SeqRay} is
isomorphic to a given quotient $Q$ of $U_R(\sop^\infty)$, the sequence \eqref{eq:SeqRay}
equidistributes over $\Ext^1_{C_2}(G,Q)$.

For $C\in \cG_{\sop}^+$, let $\cG^+(C)\subset \cG^+_{R,\sop}$ be the subset
  consisting of all extensions $\Theta$ of $C$ by $U_R(\sop^\infty)$.
  Let $\Z_S(-1)$ be a $\Z_{\sop}[C_2]$ module that is free of rank $1$ over $\Z_{\sop}$, and
  with the generator of $C_2$ acting by $-1$.
  Once one fixes an isomorphism between $\Lambda_C$ and $\Z_{\sop}$,
  every element $\Theta$ of $\cG^+(C)$ defines a $C_2$-equivariant map
  $\varphi(\Theta)\colon \Z_{\sop}(-1)\to U_R(\sop^\infty)$ and an extension $\myfunc^0(\Theta)$ of
  $\myfunc^0(C)$ by $U_R(\sop^\infty)/\im \varphi(\Theta)$, as explained in Section \ref{sec:functors}.
  Let $\pairs(C,U_R(\sop^\infty))$ be the set of such pairs $(\psi,\phi)$ with
  $\psi\in \Hom_{C_2}(\Z_{\sop}(-1),U_R(\sop^\infty))$ and $\phi\in \Ext^1_{C_2}(C,U_R(\sop^\infty)/\im\psi)$.
  For $(\psi,\phi)\in \pairs(C,U_R(\sop^\infty))$, let $O(\psi,\phi)$ be the orbit
  of that element under the natural action of $\Aut_{\ring}U_R(\sop^\infty)\times \Aut C$ on
  $\pairs(C,U_R(\sop^\infty))$.
\begin{corollary}
  Retain the notation just introduced.
  Let $C\in \cG^+_{R,\sop}$, and let $\bP^+(C)$ be the
  probability measure on $\cG^+(C)$ obtained by restricting and renormalising $\bP^+$.
  Let $\mu$ be the pushforward of $\bP^+(C)$ under the map 
  \begin{eqnarray*}
    \cG^+(C) & \to & \pairs(C,U_R(\sop^\infty)),\\
    \Theta & \mapsto & (\varphi(\Theta),\myfunc^0(\Theta)).
  \end{eqnarray*}
  Then for all $(\psi,\phi)\in \pairs(C,U_R(\sop^\infty))$, we have
  $$
  \mu(O(\psi,\phi))=\frac{\#O(\psi,\phi)}{\#U_-\cdot\#\Ext^1_{C_2}(C,U_R(\sop^\infty)/\im\psi)}.
  $$
\end{corollary}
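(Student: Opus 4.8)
The plan is to reduce the statement to the combinatorial input already assembled in Propositions~\ref{prop:sizeaut}, \ref{prop:extpreimage}, and~\ref{prop:reductions}, and then run a weighted counting argument of the kind used in Corollary~\ref{cor:converge}. First I would unwind the definitions: the measure $\bP^+(C)$ assigns to an isomorphism class $\Theta\in \cG^+(C)$ a weight proportional to $1/\#\Aut_{\ring}\Theta$, and by Proposition~\ref{prop:sizeaut} this weight is proportional to $\#O(\Theta)/\#\Ext^1_{C_2}(C,U_R(\sop^\infty))$, where $\#\Ext^1_{C_2}(C,U_R(\sop^\infty))$ is a constant not depending on $\Theta$ (it depends only on $C$ and $R$). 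Hence, up to an overall normalising constant, $\bP^+(C)$ pushed forward to the set of $\Aut_{\ring}U_R(\sop^\infty)\times\Aut C$-orbits on $\Ext^1_{C_2}(C,U_R(\sop^\infty))$ is simply the \emph{uniform} measure on that $\Ext$-group, transported to orbits; concretely, for any orbit $\mathcal O$ the weight is $\#\mathcal O/\#\Ext^1_{C_2}(C,U_R(\sop^\infty))$.

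Next I would analyse the map $\Theta\mapsto(\varphi(\Theta),\myfunc^0(\Theta))$. By the definition of $\pairsmap$ in Section~\ref{sec:functors} (with $A=U_R(\sop^\infty)$), this is exactly the map $\pairsmap\colon \Ext^1_{C_2}(C,U_R(\sop^\infty))\to \pairs(C,U_R(\sop^\infty))$, now viewed on $\Aut_{\ring}U_R(\sop^\infty)\times\Aut C$-orbits on both sides. The key point is equivariance: $\pairsmap$ intertwines the $\Aut_{\ring}U_R(\sop^\infty)\times\Aut C$-actions, so it descends to a map of orbit sets, and the fibre over an orbit $O(\psi,\phi)$ is a union of orbits whose total cardinality in $\Ext^1_{C_2}$ I must compute. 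Proposition~\ref{prop:extpreimage} tells me that $\pairsmap$ is surjective and that the preimages of $(\psi,\phi)$ and $(\psi,\phi')$ in $\Ext^1_{C_2}(C,U_R(\sop^\infty))$ have the same size; combined with the surjectivity of $\varphi_{\sop}$ from Proposition~\ref{prop:surjhom}, this means the preimage of a single pair $(\psi,\phi)$ has size exactly $\#\ker\varphi_{\sop} = \#\im\pi_C^*$, which does not depend on the pair. Therefore, summing the uniform weights over the $\pairsmap$-preimage of the orbit $O(\psi,\phi)$, and using that $\pairsmap$ is equivariant so that the preimage of the orbit is the union of the $\Aut_{\ring}U_R(\sop^\infty)\times\Aut C$-translates of the preimage of one representative, I get that $\mu(O(\psi,\phi))$ is proportional to $\#O(\psi,\phi)$ times the (constant) fibre size, divided by $\#\Ext^1_{C_2}(C,U_R(\sop^\infty))$. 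A short bookkeeping computation identifies the proportionality constant, using that $\#\Ext^1_{C_2}(C,U_R(\sop^\infty))$ factors as $\#U_-$ times $\#\Ext^1_{C_2}(C,U_R(\sop^\infty)/\im\psi)$ — this factorisation is precisely the content of Propositions~\ref{prop:surjhom} and~\ref{prop:extpreimage} read off the commutative square, since $\varphi_{\sop}$ is surjective with image all of $\Hom_{C_2}(\Lambda_{\sop},U_R(\sop^\infty))$, whose $C_2$-decomposition collapses to $\Hom(\Z_{\sop}(-1),U_-)\cong U_-$.

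I expect the main obstacle to be the last bookkeeping step: carefully tracking how the normalising constants $c_R^+$ and $\#\Ext^1_{C_2}(C,U_R(\sop^\infty))$ interact so that the final answer has the clean form $\#O(\psi,\phi)/(\#U_-\cdot\#\Ext^1_{C_2}(C,U_R(\sop^\infty)/\im\psi))$ with \emph{no} stray factors, and in particular checking that the "constant fibre size" $\#\ker\varphi_{\sop}$ equals $\#\Ext^1_{C_2}(\myfunc^0(C)(\sop^\infty),U_R(\sop^\infty))$ (via Proposition~\ref{prop:surjhom}), which is what makes the $\#\Ext^1_{C_2}(C,U_R(\sop^\infty))$ in the denominator split off the factor $\#U_-$. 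Everything else is formal: the equivariance of $\pairsmap$ and $\varphi$ is already implicit in their construction in Section~\ref{sec:functors}, the orbit-counting is the same move as in Corollary~\ref{cor:converge}, and the independence of the fibre size from $\phi$ is handed to us by Proposition~\ref{prop:extpreimage}. So the proof is essentially: rewrite $\bP^+(C)$ as uniform on $\Ext^1_{C_2}$-orbits via Proposition~\ref{prop:sizeaut}; observe $\Theta\mapsto(\varphi(\Theta),\myfunc^0(\Theta))$ is the equivariant map $\pairsmap$; push forward the uniform measure using constancy of fibre size (Proposition~\ref{prop:extpreimage}) and the $\#\Ext$ factorisation (Proposition~\ref{prop:surjhom}); normalise.
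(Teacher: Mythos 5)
The overall plan is sound and mirrors the intended argument: rewrite $\bP^+(C)$ as the orbit-size measure on $\Ext^1_{C_2}(C,U_R(\sop^\infty))$ via Proposition~\ref{prop:sizeaut}, recognise the map $\Theta\mapsto(\varphi(\Theta),\myfunc^0(\Theta))$ as the equivariant $\pairsmap$, and compute $\mu(O(\psi,\phi))=\#\pairsmap^{-1}(O(\psi,\phi))/\#\Ext^1_{C_2}(C,U_R(\sop^\infty))$ by equivariance. However, your fibre-size claim and your factorisation claim are both incorrect, and they do not cancel each other.

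You assert that the $\pairsmap$-preimage of a single pair $(\psi,\phi)$ has size $\#\ker\varphi_{\sop}$, independent of the pair. Proposition~\ref{prop:extpreimage} gives only independence of $\phi$ for a \emph{fixed} $\psi$; the fibre size genuinely depends on $\psi$. Indeed, $\varphi_{\sop}^{-1}(\psi)$ is a coset of $\ker\varphi_{\sop}$ of size $\#\ker\varphi_{\sop}$, on which $\Theta\mapsto\myfunc^0(\Theta)$ surjects onto $\Ext^1_{C_2}(\myfunc^0(C),U_R(\sop^\infty)/\im\psi)$ with fibres of uniform size $\#\ker\varphi_{\sop}\big/\#\Ext^1_{C_2}(\myfunc^0(C),U_R(\sop^\infty)/\im\psi)$. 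For $\psi=0$ this is $1$ (since $\pi_C^*$ is injective onto $\ker\varphi_{\sop}$), while for $\psi$ with large image it is larger. The $\psi$-dependence you discard is exactly what produces the $\psi$-dependence in the stated formula.

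You then invoke the factorisation $\#\Ext^1_{C_2}(C,U_R(\sop^\infty)) = \#U_-\cdot\#\Ext^1_{C_2}(C,U_R(\sop^\infty)/\im\psi)$. This is false: the left side does not depend on $\psi$ but the right side does (e.g.\ with $\myfunc^0(C)\cong\Z/\ell$ and $U_-\cong\Z/\ell$, the right side is $\ell^2$ for $\psi=0$ and $\ell$ for $\psi$ surjective). The true factorisation from Proposition~\ref{prop:surjhom} is $\#\Ext^1_{C_2}(C,U_R(\sop^\infty)) = \#\ker\varphi_{\sop}\cdot\#U_-$. If you feed your ``constant'' fibre size into your formula, the $\#\ker\varphi_{\sop}$ cancels and you get $\mu(O(\psi,\phi))=\#O(\psi,\phi)/\#U_-$, which is missing the $\Ext^1$ factor and does not normalise to $1$. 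The correct bookkeeping is: by equivariance (and since $\im\psi'\cong\im\psi$ along the orbit, so the fibre size is constant along $O(\psi,\phi)$),
$$
\#\pairsmap^{-1}(O(\psi,\phi)) = \#O(\psi,\phi)\cdot\frac{\#\ker\varphi_{\sop}}{\#\Ext^1_{C_2}(\myfunc^0(C),U_R(\sop^\infty)/\im\psi)},
$$
and dividing by $\#\Ext^1_{C_2}(C,U_R(\sop^\infty))=\#\ker\varphi_{\sop}\cdot\#U_-$ cancels the $\#\ker\varphi_{\sop}$ and yields the stated formula.
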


To complete the discussion, we record the immediate consequence of Proposition \ref{prop:sizeaut}
that Heuristic \ref{he:realquad} implies the heuristic of the second author and Sofos on ray
class groups of imaginary quadratic fields \cite{PS}.

\begin{corollary}\label{cor:equidistrExt}
  For $C\in \cG_{\sop}^-$, let $\cG^-(C)\subset \cG_R^-$ be the subset consisting of all
  extensions of $C$ by $U_R(\sop^\infty)$. Then for every $(\Aut_{\ring}U_R(\sop^\infty)\times \Aut C)$-orbit
  $O\subset \Ext^1_{C_2}(C,U_R(\sop^\infty))$, the probability measure $\mu$ on $\Ext^1_{C_2}(C,U_R(\sop^\infty))$
  obtained by restricting and renormalising $\bP^-$ satisfies
  $$
    \mu(O)=\#O/\#\Ext^1_{C_2}(C,U_R(\sop^\infty)).
  $$
\end{corollary}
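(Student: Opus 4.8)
The plan is to reduce everything to the automorphism count of Proposition~\ref{prop:sizeaut}. First I would record the dictionary: two extensions of $C$ by $U_R(\sop^\infty)$ are isomorphic exactly when their classes in $\Ext^1_{C_2}(C,U_R(\sop^\infty))$ lie in the same orbit of $\Aut_{\ring}U_R(\sop^\infty)\times\Aut C$, so $\Theta\mapsto O(\Theta)$ identifies $\cG^-(C)$ with the set of these orbits, and the orbits $O(\Theta)$, $\Theta\in\cG^-(C)$, partition $\Ext^1_{C_2}(C,U_R(\sop^\infty))$; in particular $\sum_{\Theta\in\cG^-(C)}\#O(\Theta)=\#\Ext^1_{C_2}(C,U_R(\sop^\infty))$. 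By construction, the measure $\mu$ assigns to the orbit $O(\Theta)$ the $\bP^-$-mass of $\Theta$ conditioned on $\cG^-(C)$, that mass then being spread uniformly over $O(\Theta)$. Since $\bP^-$ restricted to $\cG^-(C)$ is, by the pushforward definition of $\bP^-$ in Section~\ref{sec:mainheuristic}, a fixed scalar multiple of $\Theta\mapsto 1/\#\Aut_{\ring}\Theta$, one gets for $\Theta\in\cG^-(C)$
\[
  \mu(O(\Theta))=\frac{1/\#\Aut_{\ring}\Theta}{\sum_{\Theta'\in\cG^-(C)}1/\#\Aut_{\ring}\Theta'}.
\]

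Next I would substitute Proposition~\ref{prop:sizeaut} (applicable since $C\in\cG_{\sop}^-$ is in particular an element of $\cG_{\cpt}^-$):
\[
  \#\Aut_{\ring}\Theta=\#\Aut_{\ring}U_R(\sop^\infty)\cdot\#\Aut C\cdot\frac{\#\Ext^1_{C_2}(C,U_R(\sop^\infty))}{\#O(\Theta)}.
\]
The factors $\#\Aut_{\ring}U_R(\sop^\infty)$, $\#\Aut C$ and $\#\Ext^1_{C_2}(C,U_R(\sop^\infty))$ are the same for all $\Theta\in\cG^-(C)$, hence cancel between numerator and denominator above, leaving
\[
  \mu(O(\Theta))=\frac{\#O(\Theta)}{\sum_{\Theta'\in\cG^-(C)}\#O(\Theta')}=\frac{\#O(\Theta)}{\#\Ext^1_{C_2}(C,U_R(\sop^\infty))}
\]
by the partition identity of the first step. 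As every orbit $O\subset\Ext^1_{C_2}(C,U_R(\sop^\infty))$ equals $O(\Theta)$ for a unique $\Theta\in\cG^-(C)$, this is precisely the asserted identity $\mu(O)=\#O/\#\Ext^1_{C_2}(C,U_R(\sop^\infty))$.

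I do not anticipate a real obstacle: the corollary is an immediate consequence of Proposition~\ref{prop:sizeaut}, and the only thing to be careful about is keeping the normalisations straight --- namely that ``restricting and renormalising $\bP^-$'' does attach to $O(\Theta)$ the conditional $\bP^-$-probability of $\Theta$, that $\bP^-$ restricted to $\cG^-(C)$ is proportional to $\Theta\mapsto 1/\#\Aut_{\ring}\Theta$ (which one reads off from the pushforward definition of $\bP^-$ together with Corollary~\ref{cor:converge}), and that the common constant $\#\Aut_{\ring}U_R(\sop^\infty)\cdot\#\Aut C\cdot\#\Ext^1_{C_2}(C,U_R(\sop^\infty))$ drops out of the normalised weights.
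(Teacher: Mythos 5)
Your argument is correct and is exactly the paper's intended proof: the paper states that this corollary (along with the preceding four) is an immediate consequence of Proposition~\ref{prop:sizeaut}, and you have fleshed out precisely how. The key steps — identifying $\cG^-(C)$ with the orbit space of $\Ext^1_{C_2}(C,U_R(\sop^\infty))$, using that the orbits partition the $\Ext$-group, substituting the formula for $\#\Aut_{\ring}\Theta$ from Proposition~\ref{prop:sizeaut}, and cancelling the constant $\#\Aut_{\ring}U_R(\sop^\infty)\cdot\#\Aut C\cdot\#\Ext^1_{C_2}(C,U_R(\sop^\infty))$ after renormalisation — are the right ones.

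One small imprecision worth noting: you invoke Corollary~\ref{cor:converge} as the reason that $\bP^-$ on $\cG^-(C)$ is proportional to $1/\#\Aut_{\ring}\Theta$, but that corollary only supplies the normalising constant $c_R^-$. The proportionality itself comes from the definition of $\bP^-_{\cpt}$ as $\bP^-_{\cpt}(\{\Theta\})=(c_R^-)^{-1}/\#\Aut_{\ring}\Theta$ together with the fact that, in the $-$ case, the pushforward $\cG_{R,\cpt}^-\to\cG_{R,\sop}^-$ under $\Theta\mapsto\Theta(\sop^\infty)$ preserves proportionality to $1/\#\Aut_{\ring}$ (for instance because the base groups $C$ are, in effect, already $\sop$-groups). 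You assert this without spelling it out, but since the paper treats it as immediate as well, this does not constitute a gap in the argument.
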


\subsection{Consequences for $\ell$-torsion subgroups}
In this subsection we derive from Heuristic \ref{he:realquad}
predictions about the average $\ell$-torsion of ray class groups of
real quadratic fields, and thereby show that the heuristic is compatible
with \cite[Theorem 1]{Varma}.

If $\Theta\colon 0\to A\to B\to C\to 0$ is a short exact sequence of abelian
groups, and $\ell$ is a prime number, then by applying the functor $\Hom(\Z/\ell\Z,\bullet)$
one obtains the exact sequence
$$
0\to A[\ell] \to B[\ell]\to C[\ell] \stackrel{\delta_{\ell}(\Theta)}{\to} A/\ell A.
$$
\begin{lemma}\label{lem:deltaell}
  Let $\ell$ be an odd prime, and let $A$, $C$ be compact $\Z[C_2]$-modules.
  Then the map $\Ext^1_{C_2}(C,A)\to \Hom_{C_2}(C[\ell],A/\ell A)$ given
  by $\Theta\mapsto \delta_{\ell}(\Theta)$ is a surjective group homomorphism.
\end{lemma}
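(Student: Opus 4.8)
The plan is to follow exactly the strategy used in the proof of Lemma \ref{lem:HomExt} and Proposition \ref{prop:surjhom}: first reduce to the case of abelian groups by splitting off the $C_2$-action, and then invoke vanishing of $\Ext^2$ of abelian groups. Since $\ell$ is odd and $C_2$ has order $2$, the group ring $\Z[C_2]$ becomes, after inverting $2$ (which happens automatically on all $\ell$-torsion and all modules $A/\ell A$ when $\ell$ is odd, and more relevantly on the relevant $\Ext$ and $\Hom$ groups, which are $\ell$-power torsion), isomorphic to a product of two copies, via the idempotents $\tfrac12(1\pm g)$. Concretely, $\Hom_{C_2}(C[\ell],A/\ell A)$ and $\Ext^1_{C_2}(C,A)$ each decompose as a direct sum of a $(+)$-part and a $(-)$-part according to the eigenspaces of $g$, and the map $\Theta\mapsto \delta_\ell(\Theta)$ respects this decomposition. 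So it suffices to prove surjectivity after ignoring the $C_2$-module structure, i.e. for ordinary abelian groups.

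Next I would make the map $\Theta \mapsto \delta_\ell(\Theta)$ transparent. Applying $\Hom(\Z/\ell\Z,-)$ to $\Theta\colon 0\to A\to B\to C\to 0$ gives the six-term exact sequence
\[
  0\to A[\ell]\to B[\ell]\to C[\ell]\stackrel{\delta_\ell(\Theta)}{\to} A/\ell A\to B/\ell B\to C/\ell C\to 0,
\]
which is just the long exact sequence in $\Tor$ and $\Ext$ over $\Z$ associated with $\Theta$ and the module $\Z/\ell\Z$; the connecting map $\delta_\ell(\Theta)$ is $\Tor_1^{\Z}(\Z/\ell\Z, C)\to A\otimes_{\Z}\Z/\ell\Z$, which is functorial in $\Theta$. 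In particular the assignment $\Theta\mapsto \delta_\ell(\Theta)$ is additive in $\Theta$ (Baer sums of extensions go to sums of connecting maps), so it is a group homomorphism; this disposes of that part of the claim. For surjectivity, fix $\psi\colon C[\ell]\to A/\ell A$. Writing $C[\ell]=\Tor_1^{\Z}(\Z/\ell\Z,C)$ and $A/\ell A=A\otimes_{\Z}\Z/\ell\Z$, one wants to realise $\psi$ as the connecting map of some extension of $C$ by $A$.

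The cleanest way to exhibit such an extension is via the universal coefficient / $\delta$-functor formalism: the connecting homomorphism in the long exact $\Tor$--$\Ext$ sequence is precisely the effect of cupping (or composing in the derived category) with the class of $\Theta$ in $\Ext^1_{\Z}(C,A)$. Thus I would argue that the natural pairing $\Ext^1_{\Z}(C,A)\times \Tor_1^{\Z}(\Z/\ell\Z,C) \to A\otimes_{\Z}\Z/\ell\Z$ arising from the boundary map induces exactly the map we call $\delta_\ell$, and that the induced map $\Ext^1_{\Z}(C,A)\to \Hom_{\Z}(C[\ell],A/\ell A)$ has cokernel landing in $\Ext^2_{\Z}(\text{something finite}, A)$ — more precisely, applying $\Hom_{\Z}(-,A)$ to the free resolution $0\to \Z \stackrel{\ell}{\to}\Z\to \Z/\ell\Z\to 0$ tensored with $C$ identifies the target and fits $\delta_\ell$ into a sequence whose next term is $\Ext^2$ of a torsion group against $A$, which vanishes by \cite[Lemma 3.3.1]{Weibel} since $\Z$ has global dimension $1$. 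Concretely: $C[\ell]=C\otimes_{\Z}\Z/\ell\Z$ is not right, rather $C[\ell]=\Tor_1$; instead I would resolve $C$ by a two-term complex $0\to P_1\to P_0\to C\to 0$ of free abelian groups, apply $\Hom(-,A)$, and note that $\Hom(P_\bullet,A)/\ell$ computes both $C[\ell]$-side data and that every map $C[\ell]\to A/\ell A$ lifts to a chain map because $\Ext^2_{\Z}=0$; then that chain map, read as an element of $\Ext^1_{\Z}(C,A)$, has the prescribed $\delta_\ell$.

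\textbf{Main obstacle.} The genuinely non-formal input is the vanishing of $\Ext^2$, i.e. the fact that $\Z$ has global dimension $1$, which is exactly what forces the relevant lifting problem to have a solution; this is the same mechanism already exploited in the proofs of Proposition \ref{prop:surjhom} and Lemma \ref{lem:HomExt}, so the hard part is really just bookkeeping: making the identification of $\delta_\ell(\Theta)$ with the connecting map of a $\Tor$--$\Ext$ long exact sequence precise enough that additivity and the surjectivity-via-$\Ext^2=0$ argument both apply verbatim, and checking that the reduction to abelian groups (splitting by the idempotents of $\Z[C_2][\tfrac12]$) is legitimate for $\Ext^1_{C_2}$ and $\Hom_{C_2}$ — which it is, by the same argument as in Lemma \ref{lem:HomExt}, since $\ell$ and hence the orders of all groups in sight are odd.
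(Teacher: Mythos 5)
Your surjectivity argument — realise $\delta_\ell$ as a connecting map and lift along $\Ext^2_{\Z}=0$ — is a genuinely different and cleaner route than the paper's, which instead exhibits an explicit extension $B$ by generators and relations (lifting each relation $\ell^{e_c}c=0$ to $\ell^{e_c}c=f(\ell^{e_c-1}c)$). If you wanted to tidy your argument: apply $\Hom(-,A)$ to the short exact sequence $0\to C[\ell]\to C\to \ell C\to 0$; the resulting surjection $\Ext^1(C,A)\twoheadrightarrow \Ext^1(C[\ell],A)$ (cokernel in $\Ext^2(\ell C,A)=0$), composed with the canonical identification $\Ext^1(C[\ell],A)\cong\Hom(C[\ell],A/\ell A)$ (valid since $C[\ell]$ is an $\F_\ell$-vector space), is exactly $\delta_\ell$.

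However, the reduction steps in your proposal have two gaps. First, $A$ and $C$ are \emph{compact}, so one cannot take free $\Z$-resolutions of $C$ directly; the paper first passes to Pontryagin duals via $\Ext^1_{C_2}(C,A)=\Ext^1_{C_2}(A^\vee,C^\vee)$ and $\Hom_{C_2}(C[\ell],A/\ell A)=\Hom_{C_2}(A^\vee[\ell],C^\vee/\ell C^\vee)$, and only then does homological algebra over $\Z[C_2]$ make sense. Your write-up skips this step entirely. Second, the claim that $\Ext^1_{C_2}(C,A)$ is $\ell$-power torsion, which you use to justify decomposing it via the idempotents $\tfrac12(1\pm g)$, is false in general — only the target $\Hom_{C_2}(C[\ell],A/\ell A)$ is killed by $\ell$. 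The correct justification (the paper's) is that the map factors through $\Ext^1_{\Z_\ell[C_2]}(\Z_\ell\otimes C,\Z_\ell\otimes A)$, that this factor map is surjective because $\Z_\ell$ is flat over $\Z$, and that $\Z_\ell[C_2]\cong\Z_\ell\times\Z_\ell$; your idempotent idea can be salvaged along similar lines, but not via the $\ell$-power-torsion claim as stated.
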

\begin{proof}
  The fact that the map is a group homomorphism can be shown by an explicit Baer
  sum calculation, which we leave to the reader.

  We now prove surjectivity. We have $\Ext^1_{C_2}(C,A) = \Ext^1_{C_2}(A^\lor,C^\lor)$
  and $\Hom_{C_2}(C[\ell],A/\ell A)=\Hom_{C_2}(A^\lor[\ell],C^\lor/\ell C^\lor)$,
  and the map $\Theta^\lor \mapsto \delta_{\ell}(\Theta^\lor)$ is, under these identifications,
  the same map. Thus, we may equivalently assume that $A$ and $C$ are finitely generated
  rather than compact. Next, the map clearly factors through 
  $$
  \Ext^1_{C_2}(C,A) \to \Ext^1_{\Z_{\ell}[C_2]}(\Z_{\ell}\otimes C,\Z_{\ell}\otimes A),
  $$
  and since $\Z_{\ell}[C_2]$ is flat over $\Z[C_2]$, that latter map is surjective.
  It therefore suffices take $A$ and $C$ to be finitely generated $\Z_{\ell}[C_2]$-modules.
  Moreover, we have $\Z_{\ell}[C_2]\cong \Z_{\ell}\times \Z_{\ell}$,
  so as in earlier proofs, the claim easily reduces to the analogous claim in the
  category of finitely generated $\Z_{\ell}$-modules. For a final reduction, since $C$
  is a direct sum of its torsion subgroup and a free group, we may assume, without
  loss of generality, that $C$ is torsion.

  Let $f\in \Hom(C[\ell],A/\ell A)$, and suppose that $A$ has presentation $\langle \cG_A | \cR_A\rangle$
  and $C$ has presentation $C=\Z^{\cG_C} / \langle \ell^{e_c}c : c \in \cG_C\rangle$ with $e_c\in \Z_{\geq 1}$ for
  all $c\in \cG_C$. For each $c\in \cG_C$, fix an arbitrary lift of $f(\ell^{e_c-1}c)\in A/\ell A$ to
  $A$, and abusing notation, temporarily also denote it by $f(\ell^{e_c-1}c)$.
  Define 
  $$
    B=\langle \cG_A, \cG_C | \cR_A, \ell^{e_c}c - f(\ell^{e_c-1}c)\text{ for all }c\in \cG_C\rangle.
  $$
  Then an easy computation shows that $B$ is an extension of $C$ by $A$ whose class in
  $\Ext^1(C,A)$ is mapped to $f$ under the map in the statement of the lemma.
\end{proof}

For the purposes of the rest of the section, suppose that $S=\{\ell\}$, where
$\ell$ is an odd prime.

Suppose that $F$ is a real quadratic field. Then applying the above construction
to the exact sequence $S_F^{\Ara}(\fm)$,
we obtain a map
$$
\delta_{F,\ell}^{\Ara}\colon \Pic^0_F[\ell] \to (\cO_F/\fm_{\f})^\times/((\cO_F/\fm_{\f})^\times)^{\ell}\cong U_R/U_R^{\ell},
$$
where, as in the introduction, the final isomorphism is obtained by means of an
identification $r\colon R\stackrel{\sim}{\to} \cO_F/\fm_{\f}$.
Similarly, applying the construction to the exact sequence $S_F^{\fin}(\fm)$, we obtain a map
$$
\delta_{F,\ell}^{\fin}\colon \Cl_F[\ell] \to \frac{(\cO_F/\fm_{\f})^\times}{((\cO_F/\fm_{\f})^\times)^{\ell}(\cO_F^\times)_{\mymod \fm_{\f}}}.
$$
Finally, when applying it to the exact sequence
$$
0 \to \bT_F \to \Pic_F^0 \to \Cl_F \to 0,
$$
we see that the corresponding sequence of $\ell$-torsion subgroups is exact, since
$\bT_F$ is divisible.
Recall that we have $\bT_F=\cO_F^\times\otimes_{\Z}\R/\Z$,
so that $\bT_F[\ell] = \cO_F^\times \otimes \tfrac{1}{\ell}\Z/\Z$, where we
identify, for each $u\in \cO^\times_F$, the element $u\otimes \tfrac{1}{\ell}$ with
the class of $(\cO_F,\tfrac{1}{\ell}\cdot u)\in \Id_F\times_{\R_{>0}}\overline{F_{\R}^\times}$
in $\Pic_F^0$. Under these identifications, we have that for every $u\in\cO_F^\times$,
the element $\delta_{F,\ell}^{\Ara}(u\otimes \tfrac{1}{\ell})$ is just the image
of $u$ under the quotient map $\cO_F^\times\mapsto (\cO_F/\fm_{\f})^\times/((\cO_F/\fm_{\f})^\times)^{\ell}$.
Temporarily denote that image by $\bar{u}_{\ell}$. Then we obtain a commutative diagram with exact rows
$$
\xymatrix{
  0 \ar[r] & \bT_F[\ell] \ar[d]\ar[r] & \Pic_F^0[\ell] \ar[d]^{\delta_{F,\ell}^{\Ara}}\ar[r] & \Cl_F[\ell] \ar[d]^{\delta_{F,\ell}^{\fin}}\ar[r] & 0\\
  0 \ar[r] & \langle \bar{u}_{\ell}: u \in \cO_F^\times\rangle \ar[r] & \frac{(\cO_F/\fm_{\f})^\times}{((\cO_F/\fm_{\f})^\times)^{\ell}} \ar[r] &
  \frac{(\cO_F/\fm_{\f})^\times}{((\cO_F/\fm_{\f})^\times)^{\ell}\langle \bar{u}_{\ell}: u \in \cO_F^\times\rangle} \ar[r] & 0.
}
$$

For $N\in \Z_{\geq 0}\cup\{\infty\}$, define
$$
\eta_N(\ell) = \prod_{i=1}^N (1-\ell^{-i}).
$$
For two groups $A$, $B$, let $\Surj(A,B)$ denote the set of surjective
homomorphisms $A\to B$.
As in the previous subsection, decompose $U_R/U_R^{\ell}$ as
$U_R/U_R^{\ell}=U_+ \oplus U_-$,
where the generator of $C_2$ acts by multiplication by $\pm 1$ on
$U_{\pm}$. For $X\in\R_{>0}$, let $\cF_X^+(R)$ be as in the introduction.
For $(F,r)\in \cF_X^+(R)$, we identify $\frac{(\cO_F/\fm_{\f})^\times}{((\cO_F/\fm_{\f})^\times)^{\ell}}$
with $U_R/U_R^{\ell}$ via $r$, so that $\delta_{F,\ell}^{\Ara}(\Pic^0_F[\ell])$
is a subspace of $U_-$.
As an immediate consequence of Theorem \ref{thm:realquad}
and of the calculations in \cite[\S 9]{CL}, we now deduce the following predictions.
\begin{corollary}\label{cor:elltorsion1}
  Suppose that Heuristic \ref{he:realquad} holds for the function
  $f\colon \cG_{R,S}^+\to \Z$ that sends an exact sequence
  $0\to U_R(\ell^\infty)\to B\to C\to 0$ to $\dim_{\F_{\ell}}B[\ell]$. Then for every $j\in \Z_{\geq 1}$
  and for every subspace $W \subset U_-$, in the limit as
  $X\to \infty$, the proportion among $(F,r)\in \cF_X^+(R)$ of those satisfying
  $$
    \dim_{\F_{\ell}}(\Pic_F^0[\ell])=j\quad\text{ and }\quad \delta_{F,\ell}^{\Ara}(\Pic^0_F[\ell])=W
  $$
  is equal to
  $$
  \frac{\eta_{\infty}(\ell)}{\ell^{(j-1)j}\eta_{j-1}(\ell)\eta_j(\ell)}\cdot\frac{\#\Surj(\F_{\ell}^j,W)}{\#\Hom(\F_{\ell}^j,U_-)}.
  $$
\end{corollary}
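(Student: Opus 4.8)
The plan is to unwind Corollary~\ref{cor:elltorsion1} into a statement about the abstract probability space $(\cG_{R,S}^+,\bP^+)$ and then reduce it to a counting statement that is already handled by \cite[\S 9]{CL}. The basic idea is that the two quantities $\dim_{\F_\ell}(\Pic_F^0[\ell])$ and $\delta_{F,\ell}^{\Ara}(\Pic_F^0[\ell])$ depend only on the $S^\infty$-torsion of the Arakelov ray class sequence $\myseq_F^{\Ara}(\fm)$, and hence (granting Heuristic~\ref{he:realquad} for the indicated function $f$) their joint distribution is governed by $\bP^+$. So the task is purely to compute, for the model sequences $\Theta\colon 0\to U_R(\ell^\infty)\to B\to C\to 0$ in $\cG_{R,S}^+$, the probability under $\bP^+$ that $\dim_{\F_\ell}B[\ell]=j$ and that the connecting map $\delta_\ell(\Theta)\colon C[\ell]\to U_R(\ell^\infty)/\ell$ has prescribed image $W\subseteq U_-$.

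First I would record that $\delta_\ell(\Theta)$ lands in the minus part: since $C_2$ acts by $-1$ on $C$ and hence on $C[\ell]$, any $C_2$-equivariant map into $U_R(\ell^\infty)/\ell=U_+\oplus U_-$ has image in $U_-$. Next, by Corollary~\ref{cor:PFclassgroup} the pushforward of $\bP^+$ recording the isomorphism class of $\myfunc^0(C)$ is exactly $\mu_{\CL}^+$, so the marginal distribution of $\myfunc^0(C)$ is the Cohen--Lenstra distribution $\tfrac12(c^+)^{-1}/(\#\Aut E\cdot \#E)$ on finite $\ell$-groups $E$. I would then observe that conditional on $\myfunc^0(C)=E$, the relevant data is the pair $(\varphi(\Theta),\myfunc^0(\Theta))$ in the notation of the last unnumbered corollary; by Proposition~\ref{prop:extpreimage} (surjectivity of $\pairsmap$, and equal fibre sizes for pairs with the same first coordinate), the conditional distribution of the first coordinate $\psi=\varphi(\Theta)\in\Hom_{C_2}(\Lambda_C,U_R(\ell^\infty))$ is uniform on its image. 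Finally, one needs to translate $\dim_{\F_\ell}B[\ell]$ and $\delta_\ell(\Theta)$ into this language: writing $0\to \bT\to B\to C\to 0$ for the $\myfunc_0$--$\myfunc^0$ decomposition of $\Theta$ (so $\bT\cong \Q_\ell/\Z_\ell$ with $C_2$ acting by $-1$), the snake lemma on $\ell$-torsion gives $\dim_{\F_\ell}B[\ell]=1+\dim_{\F_\ell}C[\ell]-\dim_{\F_\ell}\coker\big(\delta_\ell|_{\text{torus part}}\big)$, and the image of $\delta_{F,\ell}^{\Ara}$ is generated inside $U_-$ by the image of a generator of $\Lambda_C$ under $\varphi(\Theta)$ together with the contribution of $\myfunc^0(\Theta)$; I would check that in fact $\delta_{F,\ell}^{\Ara}(\Pic_F^0[\ell])=\langle\psi(\lambda)\rangle\subseteq U_-$, i.e. it is detected purely by the reduction map $\varphi$ of Proposition~\ref{prop:reductions}, specialised to the number field case.

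With these identifications in place, the probability in question becomes a product of two factors: the probability (over $\mu_{\CL}^+$, equivalently over a Cohen--Lenstra--distributed $\ell$-group $E=\myfunc^0(C)$ together with a uniformly random element of $\Surj(E[\ell],W)$ extended appropriately) that $\dim_{\F_\ell}(E[\ell]\oplus\text{rank-}1)=j$ with the torus contribution behaving correctly --- this is exactly the sum over $\ell$-groups carried out in \cite[\S 9]{CL} and gives the factor $\eta_\infty(\ell)/(\ell^{(j-1)j}\eta_{j-1}(\ell)\eta_j(\ell))$ --- times the probability that the uniformly random $C_2$-map $\psi\colon \Lambda_C\otimes\Z_\ell\to U_R(\ell^\infty)$, reduced mod $\ell$, has image exactly $W$, which is $\#\Surj(\F_\ell^{?},W)/\#\Hom(\F_\ell^{?},U_-)$ with the source dimension matching the $\ell$-rank $j$ of $\Pic_F^0[\ell]$. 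Matching the exponents so that the source is $\F_\ell^j$ rather than $\F_\ell^{j-1}$ is the one place requiring care: the rank-$1$ torus $\bT$ contributes one extra dimension to $\Pic_F^0[\ell]$ beyond $\Cl_F[\ell]$, and the connecting map $\delta_{F,\ell}^{\Ara}$ is defined on all of $\Pic_F^0[\ell]$, so its source in the Hom-count is $j$-dimensional; pinning this down is the main obstacle and amounts to a careful bookkeeping of the diagram
$$
\xymatrix{
  0 \ar[r] & \bT_F[\ell] \ar[d]\ar[r] & \Pic_F^0[\ell] \ar[d]^{\delta_{F,\ell}^{\Ara}}\ar[r] & \Cl_F[\ell] \ar[d]^{\delta_{F,\ell}^{\fin}}\ar[r] & 0
}
$$
displayed above. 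Everything else is a routine assembly of Propositions~\ref{prop:sizeaut}, \ref{prop:extpreimage}, and~\ref{prop:reductions} together with the Cohen--Lenstra sum, as asserted in the statement.
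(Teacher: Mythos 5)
Your overall plan is the right one --- reduce to a computation over the abstract probability space $(\cG_{R,\sop}^+,\bP^+)$, use Corollary~\ref{cor:PFclassgroup} for the marginal distribution of $\myfunc^0(C)$, and then work out the conditional distribution of the image of the connecting map $\delta_\ell$ given $C$. But two things go wrong in the execution, and they are not merely cosmetic.

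First, the claim that you ``would check that in fact $\delta_{F,\ell}^{\Ara}(\Pic_F^0[\ell])=\langle\psi(\lambda)\rangle$'' is false. The image of $\delta_{F,\ell}^{\Ara}$ contains the reduction of the fundamental unit, but by the commutative diagram with rows $0 \to \bT_F[\ell] \to \Pic_F^0[\ell] \to \Cl_F[\ell] \to 0$ displayed in the paper, it also surjects onto the image of $\delta_{F,\ell}^{\fin}\colon\Cl_F[\ell]\to \tfrac{(\cO_F/\fm_\f)^\times}{((\cO_F/\fm_\f)^\times)^\ell\langle\bar u_\ell\rangle}$, which is in general non-trivial. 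If your simplification were correct, the probability in Corollary~\ref{cor:elltorsion1} would vanish for every $W$ with $\dim_{\F_\ell}W\ge 2$, which contradicts the claimed formula (since $\#\Surj(\F_\ell^j,W)\ne 0$ whenever $\dim W\le j$). You even half-notice the issue in your parenthetical ``together with the contribution of $\myfunc^0(\Theta)$'', but then try to discard that contribution, and that discard is exactly what would be wrong. Relatedly, the notation ``$0\to\bT\to B\to C\to 0$ for the $\myfunc_0$--$\myfunc^0$ decomposition of $\Theta$'' confuses the middle term $B$ of $\Theta$ with $C$; you mean $0\to\myfunc_0(C)\to C\to\myfunc^0(C)\to 0$, and it is $C$, not $B$, that models $\Pic^0_F(\ell^\infty)$.

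Second, and this is the missing idea, the tool you need is Lemma~\ref{lem:deltaell}, not Proposition~\ref{prop:extpreimage} or the map $\varphi$. Proposition~\ref{prop:sizeaut} shows that conditional on the isomorphism class of $C$, the extension class of $\Theta$ is uniformly distributed in $\Ext^1_{C_2}(C,U_R(\ell^\infty))$ (the $\bP^+$-mass of an isomorphism class of extensions is proportional to the size of the corresponding $\Aut_\ring U_R(\ell^\infty)\times\Aut C$-orbit). Lemma~\ref{lem:deltaell} says that $\Theta\mapsto\delta_\ell(\Theta)$ is a \emph{surjective group homomorphism} from $\Ext^1_{C_2}(C,U_R(\ell^\infty))$ to $\Hom_{C_2}(C[\ell],U_R/U_R^\ell)=\Hom(C[\ell],U_-)$. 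Pushing a uniform distribution forward under a surjective homomorphism of finite abelian groups yields the uniform distribution, so conditionally on $C$ the map $\delta_\ell(\Theta)$ is uniform in $\Hom(C[\ell],U_-)$. Since $C[\ell]\cong\F_\ell^j$ precisely when $\dim_{\F_\ell}\Pic_F^0[\ell]=j$, the conditional probability of $\im\delta_\ell(\Theta)=W$ is exactly $\#\Surj(\F_\ell^j,W)/\#\Hom(\F_\ell^j,U_-)$ with the source dimension $j$ appearing for free; there is no bookkeeping obstacle to overcome. Multiplying by the Cohen--Lenstra probability $\eta_\infty(\ell)/(\ell^{(j-1)j}\eta_{j-1}(\ell)\eta_j(\ell))$ that $\dim_{\F_\ell}\myfunc^0(C)[\ell]=j-1$ (from Corollary~\ref{cor:PFclassgroup} and \cite[\S 9]{CL}) finishes the computation. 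The approach via $\varphi$ can at best recover the restriction of $\delta_\ell(\Theta)$ to $\myfunc_0(C)[\ell]$, which is why trying to force the whole image through $\varphi(\Theta)(\lambda)$ leads you astray.
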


\begin{corollary}
  Suppose that Heuristic \ref{he:realquad} holds for the same function
  $f$ as in Corollary \ref{cor:elltorsion1}. Let
  $W\subset U_-$ be a $1$-dimensional subspace. Then:
  \begin{enumerate}
    \item in the limit as $X\to \infty$, the proportion among $(F,r)\in \cF_X^+(R)$
      of those for which $\delta_{F,\ell}^{\Ara}(\bT_F[\ell]) = W$ is equal to
      $\frac{(\ell-1)}{\#U_-}$.
    \item for every $j\in \Z_{\geq 1}$, the proportion among $(F,r)\in \cF_X^+(R)$
      of those satisfying
  $$
    \dim_{\F_{\ell}}(\Pic_F^0[\ell])=j\quad\text{ and }\quad \delta_{F,\ell}^{\Ara}(\bT_F[\ell])=W
  $$
  is equal to
  $$
  \frac{\eta_{\infty}(\ell)}{\ell^{(j-1)j}\eta_{j-1}(\ell)\eta_j(\ell)}\cdot\frac{\ell-1}{\#U_-}.
  $$
  \end{enumerate}
\end{corollary}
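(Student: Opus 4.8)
The plan is to translate both assertions, by means of the commutative diagram with exact rows displayed just before the statement, into statements about the joint distribution, over $(F,r)\in\cF_X^+(R)$, of the two quantities $\dim_{\F_{\ell}}\Cl_F[\ell]$ and the image $\bar{\epsilon}_{\ell}$ in $U_R/U_R^{\ell}$ of a fundamental unit $\epsilon$ of $\cO_F$, and then to read off the answer from Theorem~\ref{thm:realquad} together with the $\ell$-rank computation of \cite[\S 9]{CL}.

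First I would pin down $\delta_{F,\ell}^{\Ara}(\bT_F[\ell])$. Since $\ell$ is odd, $\mu_F=\{\pm1\}$ contributes nothing to $\bT_F[\ell]=\cO_F^{\times}\otimes\tfrac{1}{\ell}\Z/\Z$, so this group is one-dimensional over $\F_{\ell}$, generated by $\epsilon\otimes\tfrac{1}{\ell}$. By the description of $\delta_{F,\ell}^{\Ara}$ on $\bT_F[\ell]$ recorded just before that diagram, this generator is sent to the image $\bar{\epsilon}_{\ell}$ of $\epsilon$ in $(\cO_F/\fm_{\f})^{\times}/((\cO_F/\fm_{\f})^{\times})^{\ell}\cong U_R/U_R^{\ell}$, the last identification being via $r$; and since $\Norm(\epsilon)=\pm1$ forces $g$ to act on $\bar{\epsilon}_{\ell}$ by $-1$, we get $\bar{\epsilon}_{\ell}\in U_-$. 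Thus $\delta_{F,\ell}^{\Ara}(\bT_F[\ell])=\langle\bar{\epsilon}_{\ell}\rangle$, the $\F_{\ell}$-line spanned by $\bar{\epsilon}_{\ell}$ (which is $0$ exactly when $\epsilon$ is an $\ell$-th power modulo $\fm_{\f}$), so the event $\delta_{F,\ell}^{\Ara}(\bT_F[\ell])=W$ is the event $\bar{\epsilon}_{\ell}\in W\setminus\{0\}$, picking out $\ell-1$ elements of $U_-$. Moreover, $\bT_F$ being $\ell$-divisible, the sequence $0\to\bT_F[\ell]\to\Pic_F^0[\ell]\to\Cl_F[\ell]\to 0$ is exact, so $\dim_{\F_{\ell}}\Pic_F^0[\ell]=1+\dim_{\F_{\ell}}\Cl_F[\ell]$ and the condition $\dim_{\F_{\ell}}\Pic_F^0[\ell]=j$ is the condition $\dim_{\F_{\ell}}\Cl_F[\ell]=j-1$.

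With this dictionary in place, the first assertion follows from Theorem~\ref{thm:realquad}(\ref{item:equihom}): as $X\to\infty$ the image of the fundamental unit equidistributes, hence so does $\bar{\epsilon}_{\ell}$ over $U_-$, and the proportion of $(F,r)\in\cF_X^+(R)$ with $\bar{\epsilon}_{\ell}\in W\setminus\{0\}$ tends to $(\ell-1)/\#U_-$. For the second assertion I would combine: (i) Theorem~\ref{thm:realquad}(\ref{item:CL}) and the $\ell$-rank computation in \cite[\S 9]{CL} for the Cohen--Lenstra measure $\mu_{\CL}^+$, which give that the proportion of $(F,r)\in\cF_X^+(R)$ with $\dim_{\F_{\ell}}\Cl_F[\ell]=j-1$ tends to $\eta_{\infty}(\ell)/(\ell^{(j-1)j}\eta_{j-1}(\ell)\eta_j(\ell))$ --- a value one may also confirm by summing the formula of Corollary~\ref{cor:elltorsion1} over all lines $W\subset U_-$, using $\sum_W\#\Surj(\F_{\ell}^j,W)=\#\Hom(\F_{\ell}^j,U_-)$; (ii) Theorem~\ref{thm:realquad}(\ref{item:indep}), making the distribution of $\Cl_F(\ell^\infty)$, hence of $\dim_{\F_{\ell}}\Cl_F[\ell]$, asymptotically independent of that of $\bar{\epsilon}_{\ell}$; and (iii) the first assertion. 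The product of the two limiting proportions is the claimed value.

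The step that needs care, rather than a new idea, is the bookkeeping around the various incarnations of the ``minus part'': one must check that the target $\frac{\Z_{\sop}[C_2]}{(g+1)}\otimes_{\Z}(R^{\times}/\{\pm1\})$ of Theorem~\ref{thm:realquad}(\ref{item:equihom}) becomes, after reduction modulo $\ell$, canonically the $\F_{\ell}$-space $U_-=(U_R/U_R^{\ell})^{g=-1}$ of this subsection, that the class recorded there is exactly $\bar{\epsilon}_{\ell}$, and that passing to pairs $(F,r)$ rather than to bare fields $F$ is precisely what removes the $\Aut_{\ring}U_R(\sop^\infty)$-orbit indeterminacy present in Corollary~\ref{cor:PFunit}, so that ``equidistributed'' may be read as equidistributed over $U_-$ itself. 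Beyond this the corollary poses no genuine obstacle, since all the probabilistic content is imported wholesale from Theorem~\ref{thm:realquad}.
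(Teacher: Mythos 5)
Your proposal is correct and takes essentially the same approach as the paper, which declares this corollary an immediate consequence of Theorem~\ref{thm:realquad} (parts (\ref{item:CL}), (\ref{item:equihom}), (\ref{item:indep})) and the $\ell$-rank computations of \cite[\S 9]{CL} without writing out details. You supply exactly the needed dictionary: $\bT_F[\ell]$ is one-dimensional over $\F_\ell$, spanned by $\epsilon\otimes\tfrac1\ell$, whose image under $\delta^{\Ara}_{F,\ell}$ is $\bar\epsilon_\ell\in U_-$; hence $\delta^{\Ara}_{F,\ell}(\bT_F[\ell])=W$ is the event $\bar\epsilon_\ell\in W\setminus\{1\}$, which by Theorem~\ref{thm:realquad}(\ref{item:equihom}) has limiting proportion $(\ell-1)/\#U_-$; and divisibility of $\bT_F$ gives $\dim\Pic^0_F[\ell]=1+\dim\Cl_F[\ell]$, so that (\ref{item:CL}) and (\ref{item:indep}) finish assertion (b). Your remarks about how passing to pairs $(F,r)$ removes the $\Aut_{\ring}$-orbit ambiguity in Corollary~\ref{cor:PFunit} are also on point. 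One small slip: in the parenthetical cross-check of the $\ell$-rank formula via Corollary~\ref{cor:elltorsion1}, the identity $\sum_W\#\Surj(\F_\ell^j,W)=\#\Hom(\F_\ell^j,U_-)$ requires the sum to run over \emph{all} subspaces $W\subset U_-$, not merely the lines, since $\delta^{\Ara}_{F,\ell}(\Pic_F^0[\ell])$ can be a subspace of any dimension up to $\min(j,\dim U_-)$. This does not affect the main argument, which does not rely on that cross-check.
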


The rest of the section is devoted to the proof of the following result,
which in particular shows that Heuristic \ref{he:realquad} is compatible
with Varma's result on the average of $3$-torsion of ray class groups of
real quadratic fields (see \cite{PS} for the corresponding result for
imaginary quadratic fields).

\begin{proposition}\label{prop:Varma}
  Let $\ell$ be an odd prime, let $\cP_1$, respectively $\cP_{\pm 1}$
  be the set of prime numbers $p|\fm_{\f}$ that satisfy $p\equiv 1\pmod\ell$,
  respectively $p\equiv \pm 1\pmod \ell$,
  and for $X\in \R_{>0}$ let $\cF_X^+$ be the set of real quadratic fields of
  conductor less than $X$. Suppose that Heuristic \ref{he:realquad}
  holds for the function $f$ defined in Corollary \ref{cor:elltorsion1}.
  Then the limit
  $$
  \Av^+(\ell)=\lim_{X\to \infty} \frac{\sum_{F\in \cF_X^+}\#\Cl_F(\fm)[\ell]}{\#\cF_X^+}
  $$
  exists, and is equal to:
  \begin{enumerate}
    \item $\ell^{\#\cP_1}\left(1+\frac{1}{\ell}\prod_{p\in \cP_{\pm 1}}\frac{p(\ell+1)+2}{2(p+1)}\right)$
      if $\ell\nmid \fm_{\f}$;
    \item $\ell^{\#\cP_1}\left(1+\frac{2}{\ell+1}\prod_{p\in \cP_{\pm 1}}\frac{p(\ell+1)+2}{2(p+1)}\right)$
      if $\ell | \fm_{\f}$ but $\ell^2\nmid \fm_{\f}$;
    \item $\ell^{\#\cP_1+1}\left(1+\prod_{p\in \cP_{\pm 1}}\frac{p(\ell+1)+2}{2(p+1)}\right)$
      if $\ell>3$ and $\ell^2 | \fm_{\f}$;
    \item $3^{\#\cP_1+1}\left(1+\frac{5}{4}\cdot\prod_{p\in \cP_{\pm 1}}\frac{2p+1}{p+1}\right)$
      if $\ell=3$ and $\ell^2|\fm_{\f}$.
  \end{enumerate}
\end{proposition}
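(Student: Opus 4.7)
The strategy is to pass through the Arakelov ray class group, to which Heuristic \ref{he:realquad} directly applies, and recover the $\ell$-torsion of $\Cl_F(\fm)$ from that of $\Pic^0_F(\fm)$. The middle column of the diagram $\mydiag_F(\fm)$ in \eqref{eq:DiagRayArak} is the exact sequence $0\to \bT_F(\fm)\to \Pic^0_F(\fm)\to \Cl_F(\fm)\to 0$. Since $\bT_F(\fm)=\cO_F^1(\fm)\otimes \R/\Z$ is a one-dimensional torus for real quadratic $F$ (by Dirichlet's unit theorem, $\cO_F^1(\fm)$ has $\Z$-rank $1$), it is $\ell$-divisible with $\#\bT_F(\fm)[\ell]=\ell$, so that $\#\Cl_F(\fm)[\ell]=\ell^{-1}\#\Pic^0_F(\fm)[\ell]$. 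Thus it suffices to evaluate the average of $\#\Pic^0_F(\fm)[\ell]$ over $F\in \cF_X^+$.

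Next, partition $\cF_X^+$ by the isomorphism class of $R=\cO_F/\fm_{\f}$; by elementary counting of squarefree discriminants with prescribed residues modulo each $p\mid \fm_{\f}$ (equivalently, Chebotarev), the density of the subfamily with given $R$ factors as $\rho(R)=\prod_{p\mid \fm_{\f}}\rho_p(A_p)$ for explicit local densities $\rho_p$. Heuristic \ref{he:realquad} applied to the function $f(\Theta)=\#B[\ell]$ (which depends only on the $\ell^\infty$-torsion of $\Theta$, and so satisfies the hypothesis with $\sop=\{\ell\}$) within each such subfamily gives
\[
\Av^+(\ell)=\frac{1}{\ell}\sum_R \rho(R)\,\bE^+_R(\#B[\ell]).
\]

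The expected value $\bE^+_R(\#B[\ell])$ is computed via the snake lemma combined with the equidistribution implicit in Propositions \ref{prop:sizeaut} and \ref{prop:extpreimage}. For $\Theta\colon 0\to A\to B\to C\to 0$ with $A=U_R(\ell^\infty)$, the six-term exact sequence of $\ell$-torsion gives $\#B[\ell]=\#A[\ell]\cdot\#C[\ell]/\#\mathrm{Im}\,\delta_\ell(\Theta)$. By Lemma \ref{lem:deltaell}, $\delta_\ell$ is a surjective group homomorphism $\Ext^1_{C_2}(C,A)\twoheadrightarrow \Hom_{C_2}(C[\ell],A/\ell A)$ and hence has equal-sized fibres; together with the fact that, conditional on $C$, the class of $\Theta$ is effectively uniformly distributed on $\Ext^1_{C_2}(C,A)$ (this is what the automorphism weighting in $\bP^+$ amounts to when evaluating $\Aut C$-invariant functions, as one sees by unfolding the orbit formula in the proof of Corollary \ref{cor:converge}), one concludes that $\delta_\ell(\Theta)\mid C$ is effectively uniform on $\Hom_{C_2}(C[\ell],A/\ell A)$. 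I then integrate over $\phi=\delta_\ell(\Theta)$ and over $C$, using the Cohen--Lenstra distribution on $\myfunc^0(C)$ from Corollary \ref{cor:PFclassgroup} and the classical moment identities of Hall and Cohen--Lenstra (\cite{Hall}, \cite[\S 9]{CL}).

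Finally, since $C_2$ acts by $-1$ on both $A$ and $C$, the groups $\Hom_{C_2}$ and $\Ext^1_{C_2}$ reduce to their minus parts over $\Z_\ell[C_2]/(g+1)\cong\Z_\ell$ (as in the proof of Lemma \ref{lem:HomExt}), and the resulting local contributions at each prime $p\mid \fm_{\f}$ factorise as an Euler-type product. Assembling these local contributions with the densities $\rho_p$ and performing a case-by-case analysis at the prime $\ell$ yields the four formulas in the statement. The main obstacle is this final bookkeeping: tracking the $\Z_\ell[C_2]$-module structure of $U_{R_\ell}(\ell^\infty)$ as $v_\ell(\fm_{\f})$ varies, and in particular handling the exceptional case $\ell=3$ with $9\mid \fm_{\f}$, in which the small-prime structure of $(\Z/9\Z)^\times\cong\Z/6\Z$ couples with the Cohen--Lenstra moment at $3$ in a way that produces the factor $5/4$ and so breaks the otherwise uniform case analysis.
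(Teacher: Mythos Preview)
Your outline is correct and follows essentially the same route as the paper: reduce $\#\Cl_F(\fm)[\ell]$ to $\ell^{-1}\#\Pic^0_F(\fm)[\ell]$ via the divisible torus, compute the expected value for fixed $R$ as $\ell\cdot\#U_+ + \#(U_R/U_R^\ell)$ using the equidistribution of $\delta_\ell$ together with the Cohen--Lenstra moment $\bE_{\mu^+_{\CL}}(\#G[\ell])=1+\ell^{-1}$, and then average over $R$ using the multiplicativity of $s(\fm_{\f})=\sum_R p_R\,\#(U_R/U_R^\ell)$ in $\fm_{\f}$. One small correction to your final remark: the factor $5/4$ in case (d) does not arise from $(\Z/9\Z)^\times$ but from the one ramified algebra $A_3=\Q_3(\zeta_3)$, whose extra cube roots of unity give $\#(U_R/U_R^3)=27$ rather than $9$ (this $R$ occurs with density $1/8$, yielding $s(3^k)=\tfrac{7}{8}\cdot 9+\tfrac{1}{8}\cdot 27=\tfrac{45}{4}$ and hence $\tfrac{45}{4}\cdot\tfrac{1}{9}=\tfrac{5}{4}$).
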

\begin{lemma}\label{lem:elltorsexp}
  Let $f\colon \cG_{R,\sop}^+\to \Z$ be the function that sends
  an exact sequence $0\to U_R(\ell^\infty)\to B \to C\to 0$ to $\#B[\ell]$.
  Then one has
  $$
    \bE^+(f) = \ell\cdot \#U_+ +
    \#\tfrac{U_R}{U_R^\ell}.
  $$
\end{lemma}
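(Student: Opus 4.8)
The plan is to unfold $\bE^+(f)$ as a ratio of sums over pairs $(C,e)$, where $C\in\cG_{\cpt}^+$ and $e$ is an extension of $C$ by $A:=U_R(\ell^\infty)$, and then to evaluate it using Lemma~\ref{lem:deltaell} together with Hall's identity and the Cohen--Lenstra $\ell$-torsion moment. First, for $\Theta\colon 0\to A\to B\to C\to 0$ in $\cG_{R,\cpt}^+$ one has $f(\Theta(\ell^\infty))=\#B[\ell]$ (since $B[\ell]\subseteq B(\ell^\infty)$), so, $\bP^+$ being the pushforward of $\bP_{\cpt}^+$, $\bE^+(f)=\big(\sum_{\Theta}\#B_\Theta[\ell]/\#\Aut_{\ring}\Theta\big)\big/\big(\sum_{\Theta}1/\#\Aut_{\ring}\Theta\big)$, with $\Theta$ running over $\cG_{R,\cpt}^+$ and $B_\Theta$ the middle term. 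Feeding in Proposition~\ref{prop:sizeaut} exactly as in the proof of Corollary~\ref{cor:converge} — grouping the extensions of each fixed $C$ into $(\Aut_{\ring}A\times\Aut C)$-orbits, on which $\#B_\Theta[\ell]$ is constant, so that the orbit sizes $\#O(\Theta)$ absorb into a full sum over $\Ext^1_{C_2}(C,A)$ — and then cancelling in the ratio both the common factor $\#\Aut_{\ring}A$ and the prime-to-$\ell$ part of $\myfunc^0(C)$ (on which neither $\#\Ext^1_{C_2}(C,A)$ nor $\#B_e[\ell]$ depends, since $A$ is an $\ell$-group, and using that $\#\Aut C=2\#\Aut\myfunc^0(C)\cdot\#\myfunc^0(C)$ factors accordingly), one is left with
\[
  \bE^+(f)=\frac1{c^+}\sum_{C}\frac1{\#\Aut C\cdot\#\Ext^1_{C_2}(C,A)}\sum_{e\in\Ext^1_{C_2}(C,A)}\#B_e[\ell],
\]
where $C$ now runs over $\R/\Z\oplus E$ with $E$ a finite abelian $\ell$-group and $\sum_E 1/(2\#\Aut E\cdot\#E)=c^+$ by Hall's identity \cite[Cor.~3.7]{CL}.

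The next step is to compute the inner sum for fixed $C$. By Lemma~\ref{lem:deltaell} the sequence $0\to A[\ell]\to B_e[\ell]\to C[\ell]\stackrel{\delta_\ell(e)}{\to}A/\ell A$ is exact, so $\#B_e[\ell]=\#A[\ell]\cdot\#C[\ell]/\#\im\delta_\ell(e)$, and $\delta_\ell$ is a surjective group homomorphism from $\Ext^1_{C_2}(C,A)$ onto $H_C:=\Hom_{C_2}(C[\ell],A/\ell A)$. Now $C_2$ acts by $-1$ on $C[\ell]$, while $A/\ell A=U_R/U_R^\ell=U_+\oplus U_-$; since $\ell$ is odd, any $C_2$-equivariant homomorphism out of $C[\ell]$ kills the $U_+$-component (as in the proof of Lemma~\ref{lem:HomExt}), so $H_C=\Hom(C[\ell],U_-)$, of order $(\#U_-)^{j}$ where $j:=\dim_{\F_\ell}C[\ell]$. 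As $\delta_\ell$ is a surjective homomorphism its fibres all have size $\#\Ext^1_{C_2}(C,A)/\#H_C$, so $\sum_e\#B_e[\ell]=\#A[\ell]\cdot\#C[\ell]\cdot\frac{\#\Ext^1_{C_2}(C,A)}{\#H_C}\sum_{h\in H_C}1/\#\im h$; and writing $1/\#\im h=\#\ker h/\ell^{j}$ and counting the homomorphisms $C[\ell]\to U_-$ that vanish on a fixed nonzero vector, one gets $\sum_{h\in H_C}1/\#\im h=(\#U_-)^{j-1}(\#U_-+\ell^{j}-1)/\ell^{j}$. Hence $\frac1{\#\Ext^1_{C_2}(C,A)}\sum_e\#B_e[\ell]=\#A[\ell]\cdot(\#U_-+\ell^{j}-1)/\#U_-$.

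Finally, for $C=\R/\Z\oplus E$ one has $C[\ell]=\Z/\ell\oplus E[\ell]$, so $\ell^{j}=\ell\cdot\#E[\ell]$, and with $\#\Aut C=2\#\Aut E\cdot\#E$ (Lemma~\ref{lem:sizeautC}) the displayed formula becomes $\bE^+(f)=\dfrac{\#A[\ell]}{c^+\cdot\#U_-}\sum_{E}\dfrac{\#U_--1+\ell\cdot\#E[\ell]}{2\#\Aut E\cdot\#E}$. The $(\#U_--1)$-part contributes $(\#U_--1)c^+$ by Hall's identity, while $\sum_E\#E[\ell]/(\#\Aut E\cdot\#E)=2c^+(1+\ell^{-1})$ is the Cohen--Lenstra average of $\#E[\ell]=\#\Hom(E,\Z/\ell)$ computed in \cite[\S 9]{CL} (the same input used for Corollary~\ref{cor:elltorsion1}); so the sum equals $c^+(\#U_-+\ell)$, and since $\#A[\ell]=\#(A/\ell A)=\#U_+\cdot\#U_-$ we conclude $\bE^+(f)=\#U_+(\#U_-+\ell)=\#\tfrac{U_R}{U_R^\ell}+\ell\cdot\#U_+$. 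I expect the main obstacle to be the middle step: getting the $C_2$-equivariance bookkeeping right so that $H_C$ is genuinely $\Hom(C[\ell],U_-)$, and carrying out cleanly the elementary count of $\sum_{h}1/\#\im h$; everything else is assembly of results already established above.
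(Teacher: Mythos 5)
Your proof is correct and follows essentially the same route as the paper's: reduce via Proposition~\ref{prop:sizeaut} and Lemma~\ref{lem:sizeautC} to an average over $(C,e)$ weighted by $\mu^+_{\CL}$, use Lemma~\ref{lem:deltaell} to replace $\#B_e[\ell]$ by $\#A[\ell]\cdot\#C[\ell]/\#\im\delta_\ell(e)$ and average uniformly over $\Hom(C[\ell],U_-)$, then feed in Hall's identity and the Cohen--Lenstra $\ell$-torsion moment $\sum_E\mu^+_{\CL}(E)\#E[\ell]=1+\ell^{-1}$. The only cosmetic difference is in evaluating $\sum_{h}1/\#\im h$: you count $\#\ker h$ by summing over vectors $v\in C[\ell]$, whereas the paper counts $\#U_-/\#\im h$ by summing over characters $\chi\in(U_-)^\vee$ of $U_-$ vanishing on $\im h$; these are Pontryagin-dual formulations of the same computation and give the identical intermediate expression.
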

\begin{proof}
  The proof is essentially identical to that of \cite[Prop. 2.11]{PS}, when
  phrased in terms of Arakelov ray class groups. We will
  reproduce it here for the convenience of the reader.

  For $\Theta\in \cG_{R,S}^+$, write $B(\Theta)$ for the middle term of the sequence.
  By definition, we have
  $$
    \bE^+(f) = \sum_{\Theta\in \cG_{R,\sop}^+} \frac{(c_R^+)^{-1}\#B(\Theta)[\ell]}{\#\Aut_{\ring}\Theta}.
  $$
  By Proposition \ref{prop:sizeaut} and Corollary \ref{cor:PFclassgroup}, this is
  equal to
  $$
  \sum_{C\in \cG_{\sop}^+}\left( \frac{\mu^+_{\CL}(\myfunc^0(C))}{\#\Ext^1_{C_2}(C,U_R/U_R^\ell)}\sum_{\Theta\in \Ext^1_{C_2}(C,U_R/U_R^\ell)}\#B(\Theta)[\ell]\right).
  $$
  By Lemma \ref{lem:deltaell}, this equals
  \begin{equation}\label{eq:bE1}
    \quad\quad\sum_{C\in \cG_{\sop}^+}\left(\frac{\mu^+_{\CL}(\myfunc^0(C))}{\#\Hom(C[\ell],U_-)}\sum_{\delta\in \Hom(C[\ell],U_-)}\frac{\#U_R[\ell]\#C[\ell]}{\#\delta(C[\ell])}\right).
  \end{equation}
  For each $\chi\in (U_-)^\lor$ and for each $\delta\in \Hom(C[\ell],U_-)$,
  define $\triv_{\chi\circ \delta}$ to be $1$ if $\chi$ vanishes on the image of $\delta$,
  and $0$ otherwise. Then for each such $\delta$, we have
  $$
  \#U_-\big/\#\delta(C[\ell])=\#\{\chi\in (U_-)^\lor : \triv_{\chi\circ\delta}=1\},
  $$
  so that we may rewrite \eqref{eq:bE1} as 
  $$
  \sum_{C\in \cG_{\sop}^+}\left(\frac{\mu^+_{\CL}(\myfunc^0(C))}{\#\Hom(C[\ell],U_-)}\sum_{\delta}\#U_+\#C[\ell]
  \sum_{\chi}\triv_{\chi\circ\delta}\right).
  $$
  Here and below, the sum over $\delta$ runs over $\delta\in \Hom(C[\ell],U_-)$,
  and the sum over $\chi$ runs over $\chi\in (U_-)^\lor$.
  Exchanging the order of summation, we may rewrite this as
  \begin{equation}\label{eq:bE2}
    \sum_{C\in \cG_{\sop}^+}\left(\mu^+_{\CL}(\myfunc^0(C))\#U_+\#C[\ell]\sum_{\chi}
    \frac{\sum_{\delta}\triv_{\chi\circ\delta}}{\#\Hom(C[\ell],U_-)}\right).
  \end{equation}
  The summand in the inner sum corresponding to $\chi\in (U_-)^\lor$ is
  equal to $1$ if $\chi$ is trivial, and to $1/\#C[\ell]$ otherwise. Thus \eqref{eq:bE2}
  simplifies to
  $$
  \sum_{C\in \cG_{\sop}^+}\left(\mu^+_{\CL}(\myfunc^0(C))\#U_+\#C[\ell]\Big(1+\frac{\#U_- - 1}{\#C[\ell]}\Big)\right).
  $$
  For all $C\in \cG_{\sop}^+$ we have $\#C[\ell] = \ell\cdot\#\myfunc^0(C)[\ell]$, and by \cite[Example 5.12]{CL} we have
  $\sum_{C\in \cG_{\sop}^+}(\mu^+_{\CL}(\myfunc^0(C))\#\myfunc^0(C)[\ell]) = 1+\tfrac{1}{\ell}$,
  so that the above expression for $\bE^+(f)$ becomes
  \begin{equation*}
    \begin{split}
    \#U_+\cdot(\ell +1 + \#U_- - 1) & = \#U_+\cdot(\ell+\#U_-)\\
                                                               &= \ell\cdot \#U_+ +
    \#\left(U_R/U_R^\ell\right),
  \end{split}
\end{equation*}
  as claimed.
\end{proof}

\begin{proposition}\label{prop:VarmaFixedR}
  Suppose that Heuristic \ref{he:realquad} holds for the function $f$ defined in
  Corollary \ref{cor:elltorsion1}. Then one has
  $$
   \lim_{X\to \infty}\frac{\sum_{(F,r)\in \cF_X^+(R)} \#\Cl_F(\fm)[\ell]}{\#\cF_X^+(R)} = \#U_+ +\frac{\#\left(U_R/U_R^\ell\right)}{\ell}.
  $$
\end{proposition}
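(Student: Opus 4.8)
The plan is to reduce the average of $\#\Cl_F(\fm)[\ell]$ to the average of $\#\Pic^0_F(\fm)[\ell]$, which Heuristic \ref{he:realquad} controls through Lemma \ref{lem:elltorsexp}, by extracting a factor of $\ell$ from the middle column $0\to \bT_F(\fm)\to \Pic^0_F(\fm)\to \Cl_F(\fm)\to 0$ of the diagram $\mydiag_F(\fm)$. Since $\bT_F(\fm)=\cO_F^1(\fm)\otimes_{\Z}\R/\Z$ is divisible, applying $\Hom(\Z/\ell\Z,-)$ produces a short exact sequence $0\to \bT_F(\fm)[\ell]\to \Pic^0_F(\fm)[\ell]\to \Cl_F(\fm)[\ell]\to 0$ (the connecting map has target $\bT_F(\fm)/\ell\bT_F(\fm)=0$), exactly as already observed in \S\ref{sec:good} for the modulus $(\cO_F,\{\})$. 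As $F$ is real quadratic, Dirichlet's unit theorem gives that $\cO_F^1(\fm)$ has $\Z$-rank $1$, so $\bT_F(\fm)\cong\R/\Z$ and $\#\bT_F(\fm)[\ell]=\ell$; hence $\#\Pic^0_F(\fm)[\ell]=\ell\cdot\#\Cl_F(\fm)[\ell]$ for every real quadratic $F$. Moreover, since $\ell$-torsion is $\ell^\infty$-torsion and $S=\{\ell\}$, the number $\#\Pic^0_F(\fm)[\ell]$ depends only on the isomorphism class of $\myseq^{\Ara}_F(\fm)(\ell^\infty)$ in $\cG^+_{R,\sop}$, and, identifying its left-hand term with $U_R(\ell^\infty)$ via $r$, it equals $f\big(\myseq^{\Ara}_F(\fm)(\ell^\infty)\big)$, where $f$ is the function of Lemma \ref{lem:elltorsexp} sending $0\to U_R(\ell^\infty)\to B\to C\to 0$ to $\#B[\ell]$.

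Next I would apply the heuristic. Summing the identity $\#\Cl_F(\fm)[\ell]=\tfrac1\ell\#\Pic^0_F(\fm)[\ell]$ over $(F,r)\in\cF^+_X(R)$ and dividing by $\#\cF^+_X(R)$ reduces the claim to showing that $\lim_{X\to\infty}\frac{1}{\#\cF^+_X(R)}\sum_{(F,r)\in\cF^+_X(R)}f\big(\myseq^{\Ara}_F(\fm)(\ell^\infty)\big)=\bE^+(f)$. The hypothesis of the proposition is phrased in terms of the function $\dim_{\F_\ell}B[\ell]$ appearing in Corollary \ref{cor:elltorsion1}, and $f=\ell^{\dim_{\F_\ell}B[\ell]}$ is its exponential; the passage between the two is harmless because, by Corollary \ref{cor:elltorsion1} summed over all subspaces $W\subset U_-$, the limiting distribution of $\dim_{\F_\ell}B[\ell]$ has Cohen--Lenstra-type rapid decay, so $f$ is $L^i$ with respect to $\bP^+$ for all $i\geq1$ and hence ``reasonable'' in the sense of Heuristic \ref{he:realquad} (equivalently, one may sum those limiting proportions against $\ell^j$, the interchange of limit and sum being legitimate by the same tail estimate). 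Thus the average of $f$ over $\cF^+_X(R)$ converges to $\bE^+(f)$.

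Finally, Lemma \ref{lem:elltorsexp} evaluates $\bE^+(f)=\ell\cdot\#U_+ + \#(U_R/U_R^\ell)$, and dividing by $\ell$ yields the asserted value $\#U_+ + \#(U_R/U_R^\ell)/\ell$. The only genuinely delicate point is the ``reasonable function'' / interchange-of-limit-and-sum issue in the second paragraph; the rest is formal, the essential structural input being that for real quadratic fields the torus $\bT_F(\fm)$ is one-dimensional, so it contributes exactly the factor $\ell$ relating $\#\Pic^0_F(\fm)[\ell]$ to $\#\Cl_F(\fm)[\ell]$.
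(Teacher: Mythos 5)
Your proof is correct and takes essentially the same approach as the paper's, which consists of the single observation that $\#\Cl_F(\fm)[\ell]=\#\Pic^0_F(\fm)[\ell]/\ell$ for real quadratic $F$, followed by an appeal to Lemma \ref{lem:elltorsexp}. You have simply made explicit the two points the paper takes for granted, namely the justification of that factor of $\ell$ via divisibility of the one-dimensional torus $\bT_F(\fm)$, and the ``reasonable function'' bookkeeping needed to pass from the hypothesis in Corollary \ref{cor:elltorsion1} to the function $\#B[\ell]$ appearing in Lemma \ref{lem:elltorsexp}.
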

\begin{proof}
  If $F$ is a real quadratic field, then one has $\#\Cl_F(\fm)[\ell]=\frac{\#\Pic^0_F(\fm)[\ell]}{\ell}$,
  so the claim follows immediately from Lemma \ref{lem:elltorsexp}.
\end{proof}

\begin{proof}[Proof of Proposition \ref{prop:Varma}]
  Denote the limit in Proposition \ref{prop:VarmaFixedR} by $\Av_R^+(\ell)$.
  Let $p_R$ be the limit as $X\to \infty$ of the proportion among all $F\in \cF_X^+$
  of those for which $\cO_F/\fm_{\f}$ is isomorphic to $R$. Then we have
  \begin{eqnarray*}
    \Av^+(\ell) = \sum_R p_R\cdot\Av_R^+(\ell)=\sum_Rp_R\cdot\#U_+ +\frac{1}{\ell}\sum_Rp_R\cdot\#\left(U_R/U_R^\ell\right),
  \end{eqnarray*}
  where the sums run over all possible rings $R$. To prove the proposition,
  we will explicitly evaluate the two sums on the right hand side.

  The first sum is easy to evaluate: $\#U_+$ is independent of
  $R$, namely we have
  $$
    \#U_+ = \leftchoice{\ell^{\#\cP_1}}{\ell^2\nmid \fm_{\f}}{\ell^{\#\cP_1+1}}{\ell^2|\fm_{\f}}.
  $$
  We will now evaluate the second sum, which we denote by $s(\fm_{\f})$.
  If $\fm_{\f}=\fn\fn'$, where $\fn$ and $\fn'$ are coprime, then as $F$
  varies over real quadratic fields, the structures of $\cO_F/\fn$ and
  $\cO_F/\fn'$ are, in the limit, independent of each other, whence it easily
  follows that one has $s(\fm_{\f}) = \prod_{p^k\parallel \fm_{\f}}s(p^k)$,
  with the product running over the distinct prime divisors of $\fm_{\f}$. It
  therefore suffices to evaluate $s(\fm_{\f})$ in the special case that
  $\fm_{\f}=p^k$, where $p$ is a prime number and $k\in \Z_{\geq 1}$, which we
  will now do.

  Recall that we have $R=\cO_A/(p^k)$, where $\cO_A$ is the integral closure of
  $\Z_p$ in a degree $2$ \'etale $\Q_p$-algebra $A_p$. If $p\not \in \{-1,0,1\} \pmod \ell$,
  then we have $\#(U_R/U_R^\ell)=1$. The calculation now breaks up into several cases.\vspace{1em}
  \newline
  \textit{Case 1: $p\equiv 1\pmod \ell$.}
  Write $A_p=\Q_p[x]/(x^2-d)$, where $d\in \Z_{>0}$ is square-free. One has
  $A_p \cong \Q_p\times \Q_p$ if and only if $p\nmid d$ and $d$ is a square
  modulo $p$; $A_p$ is an unramified quadratic field extension of $\Q_p$ if and
  only if $p\nmid d$ and $d$ is a non-square modulo $p$; and $A_p$ is
  a quadratic ramified extension of $\Q_p$ if and only if $p|d$. In these
  three cases, we have that $p_R$ is equal to $\frac{p}{2(p+1)}$, respectively
  $\frac{p}{2(p+1)}$, respectively $\frac{1}{p+1}$; and $\#(U_R/U_R^\ell)$
  is equal to $\ell^2$, respectively $\ell$, respectively $\ell$. We deduce
  that in this case we have
  $$
    s(p^k) = \left(\frac{\ell^2\cdot p}{2(p+1)} + \frac{\ell\cdot p}{2(p+1)} + \frac{\ell}{p+1}\right)
    = \ell\cdot\frac{p(\ell+1)+2}{2(p+1)}.
  $$
  \newline
  \textit{Case 2: $p\equiv -1\pmod \ell$.}
  Considering the same three cases as above, we have the same proportions $p_R$,
  and this time $\#(U_R/U_R^\ell)$ is equal to $1$ in the split
  case, to $\ell$ in the case of $A_p/\Q_p$ being an unramified quadratic field extension,
  and to $1$ in the ramified case. In summary, we have
  $$
    s(p^k) = \frac{p}{2(p+1)} + \frac{\ell\cdot p}{2(p+1)} + \frac{1}{p+1} = \frac{p(\ell+1) + 2}{2(p+1)}.
  $$
  \newline
  \textit{Case 3: $p=\ell$, $k=1$.}
  Again, we have the same three possibilities for $R$ with proportions $p_R$
  equal to $\frac{\ell}{2(\ell+1)}$, $\frac{\ell}{2(\ell+1)}$, and $\frac{1}{\ell+1}$,
  respectively. In these three cases, $\#(U_R/U_R^\ell)$ is equal to $1$, $1$, and
  $\ell$, respectively, so that one has
  $$
  s(\ell) = \frac{\ell}{2(\ell+1)} + \frac{\ell}{2(\ell+1)} + \frac{\ell}{\ell+1} = \frac{2\ell}{\ell+1}.
  $$
  \newline
  \textit{Case 4: $p=\ell>3$, $k>1$.}
  In this case, one has $\#(U_R/U_R^\ell)=\ell^2$ for all possible rings $R$,
  so that we have
  $$
    s(\ell^k) = \ell^2.
  $$
  \newline
  \textit{Case 5: $p=\ell=3$, $k>1$.}
  In this case, one has $\#(U_R/U_R^\ell)=9$, unless $A_3 \cong \Q_3(\zeta_3)$. For
  that latter ring, which occurs with frequency $p_R=1/8$, one has $\#(U_R/U_R^\ell)=27$.
  In summary, in this case we have
  $$
    s(3^k) = \frac{9\cdot7}{8} + \frac{27}{8} = \frac{45}{4}.
  $$
  This completes the computation of $\Av^+(\ell)$.
\end{proof}



\end{document}